\newtheorem{theorem}[subsection]{Theorem}
\newtheorem{definition}[subsection]{Definition}
\newtheorem{lemma}[subsection]{Lemma}
\newtheorem{remark}[subsection]{Remark}
\newtheorem{proposition}[subsection]{Proposition}
\newtheorem{corollary}[subsection]{Corollary}
\newtheorem*{claim*}{Claim}
\newtheorem*{theorem*}{Theorem}
\def\bal{\begin{aligned}}
\def\eal{\end{aligned}}
\def\be{\begin{equation}}
\def\ee{\end{equation}}
\def\bcs{\begin{cases}}
\def\ecs{\end{cases}}
\def\={\;=\;}
\def\+{\,+\,}
\def\-{\,-\,}
\def\Z{{\mathbb Z}}
\def\Q{{\mathbb Q}}
\def\R{{\mathbb R}}
\def\P{{\mathbb P}}
\def\lb{\llbracket}
\def\rb{\rrbracket}
\def\ord{\mathrm{ord}}
\def\sO{\mathcal{O}} 
\def\sG{\mathcal{G}}
\def\sA{\mathcal{A}}
\def\cartier{\mathscr{C}_p}
\def\fil{\mathscr{F}}
\def\v#1{{\bf #1}}
\def\is{\equiv}
\def\mod#1{({\rm mod}\ #1)}
\def\hat{\widehat}
\definecolor{lightgrey}{rgb}{0.8, 0.84, 0.8}
\title{On $p$-integrality of instanton numbers}
\author{Frits Beukers, Masha Vlasenko}
\address{Utrecht University }
\email{f.beukers@uu.nl}
\address{Institute of Mathematics of the Polish Academy of Sciences}
\email{m.vlasenko@impan.pl}
\thanks{
Work of Frits Beukers was supported by the Netherlands Organisation for 
Scientific Research (NWO),
grant TOP1EW.15.313. Work of Masha Vlasenko was supported by the National Science Centre of 
Poland (NCN), grant UMO-2020/39/B/ST1/00940.}
\begin{document}
\maketitle
\medskip

\centerline{Dedicated to Don Zagier's 70th birthday, in friendship and admiration}
\medskip

\section{Introduction}
The motivation for this paper comes from the striking work 
\cite{COGP91} of Candelas, de la Ossa, Green
and Parkes in the study of mirror symmetry of quintic threefolds from
1991. The story has been told many times, so we will give only a very brief description.
For more
details we like to refer to Duco van Straten's excellent \cite{DS18} and the many
references therein. Our short story starts with the differential operator
\[
L=\theta^4- 5 t(5\theta+1)(5\theta+2)(5\theta+3)(5\theta+4),
\]
where $\theta$ denotes $t\frac{d}{dt}$. 
The unique holomorphic solution to $L(y)=0$
is of hypergeometric type and given by
\[
F_0:=\sum_{n\ge0}\frac{(5n)!}{(n!)^5}t^{n}.
\]
 
The equation $L(y)=0$ has a unique basis of solutions of the form
\[
y_0=F_0,\ y_1=F_0\log t+F_1,\ y_2=\frac12 F_0\log^2t+F_1\log t+F_2,
\]
\[
y_3=\frac16 F_0\log^3t+\frac12 F_1\log^2t+F_2\log t+F_3,
\] 
where $F_0$ is given above and $F_1,F_2,F_3\in t\Q\lb t\rb$. Straightforward computation shows
that the coefficients of $F_1$ are certainly not integral. The surprise is that
$q(t):=t\exp(F_1/F_0)$, expanded as power series in $t$, does
have integer coefficients. The function $q(t)$ is called the 
{\it canonical coordinate}. The inverse power series $t(q)$ 
is called the {\it mirror map}. 
Using this inverse series one can rewrite the solutions $y_i$ as functions of $q$.
In particular,
\[
y_1/y_0 = \log q,\quad y_2/y_0=\frac12 \log^2q+V(q)
\]
for some $V(q)\in q\Q\lb q\rb$.
Let $\theta_q=q\frac{d}{dq}$. Then $K(q):=1+\theta_q^2V$
is called the {\it Yukawa coupling} for reasons coming from theoretical physics. 
In \cite{COGP91} the Yukawa coupling is expanded as a so-called Lambert series in the form
\[
K(q)=1+\sum_{n\ge1}\frac{A_nq^n}{1-q^n}.
\]
Candelas et al conjectured that the numbers $a_n = 5 A_n/n^3$ are integers which are
equal to the (virtual) number of degree $n$ rational curves on a generic quintic threefold in $\P^3$. 
This counting relation is now a part of the mirror symmetry theory
from theoretical physics. The numbers $a_n$ are called {\it instanton numbers}.
In this paper we shall concentrate on their integrality.

It turns out that there are many other examples of differential equations, similar to
the example above, which display integrality properties of the associated instanton numbers.
Motivated by this, Almkvist, van Enckevort, van Straten and Zudilin \cite{AESZ10}
compiled an extensive list of so-called Calabi-Yau operators. For each entry in the list
the authors give experimental evidence that the instanton numbers are $p$-adically integral
for almost all primes $p$, with some obvious exceptions. The quintic example that we started
with belongs to this list. Presumably, each equation in the list is a so-called Picard-Fuchs
equation corresponding to a one parameter family of Calabi-Yau threefolds.

Around 1995 Giventhal \cite{Giv95} showed that instanton numbers can be related to
the geometrical Gromov-Witten invariants of Calabi-Yau threefolds. However, the
integrality of these invariants is not a priori clear. Around 1998 Gopakumar and Vafa
introduced the so-called BPS-numbers for Calabi-Yau threefolds, which include the instanton numbers
as $g=0$ case, see \cite[Def 1.1]{BP01}.
In 2018 Ionel and Parker \cite{IP18} proved that these BPS-numbers
are integers by using methods from symplectic topology. So this opens up a path towards
proving integrality of instanton numbers.

Another approach to the integrality of instanton numbers was the use of Dwork's
ideas in $p$-adic cohomology. This was started by Jan Stienstra \cite{Sti03} in 2003 with 
partial success and later Kontsevich, Schwarz and Vologodsky \cite{KSV06},
\cite{Vol07} laid out the ideas for a completely general approach. However, since \cite{Vol07} is not
published yet, it is not clear to us what the status of the final results is.

One of the main goals of the present paper is to show by
elementary means that for the quintic example and a few
related examples the $a_n$ are $p$-adic integers for almost all $p$. The method
showed up as a by-product of our prior work in the paper Dwork crystals III,
\cite{BeVlIII}, which is a continuation of the preceding papers Dwork crystals I,II
(\cite{BeVl20I},\cite{BeVl20II}). 

We begin by sketching an outline of our approach. To study $p$-integrality 
we let $p$ be an odd prime. 
Let $\sigma:\Z_p\lb t\rb\to\Z_p\lb t\rb$ be a $p$-th power Frobenius lift, which
is a ring endomorphism such that $h(t)^\sigma:=\sigma(h(t))\is h(t^p)\mod{p}$
for all $h\in\Z_p\lb t\rb$.
In this paper we shall take it to be of the form $t^\sigma=t^p(1+ptu(t))$ with
$u(t)\in\Z_p\lb t\rb$. 
We start with a linear differential equation $L(y)=0$ where
\[
L=\theta^n+a_{n-1}(t)\theta^{r-1}+\cdots+a_1(t)\theta+a_0(t) \in\Z_p\lb t\rb[\theta],\quad
\theta=t\frac{d}{dt}.
\]
We also assume that it is of MUM-type (maximal unipotent monodromy) at the origin,
i.e $a_i(0)=0$ for $i=0,\ldots,n-1$. The main examples of such equations are the Picard-Fuchs
equations which are associated to certain one parameter families of algebraic varieties. 
We shall give a more precise definition later on.
For a MUM-type equation there is a unique basis of local solutions 
$y_0,y_1,\ldots,y_{n-1}$ of the form
\be\label{standardbasis}
y_i=F_0\frac{\log^i t}{i!}+F_1\frac{\log^{i-1}t}{(i-1)!}+\cdots+F_{i-1}\log t+F_i,
\ i=0,\ldots,n-1,
\ee
and $F_i\in\Q_p\lb t\rb$, $F_0(0)=1$ and $F_i(0)=0$ if $i>0$. 
We extend the Frobenius lift $\sigma$ to the solutions $y_i^\sigma$ in the obvious
way, i.e. by taking $F_i^\sigma=F_i(t^\sigma)$ and 
\[
\log t^\sigma=p\log t -\sum_{m\ge1}\frac{1}{m}(-p \,t \, u(t))^m.
\]

A familiar property of Picard-Fuchs equations is the presence of a Frobenius
structure for almost all $p$. We give a closely related concrete definition in terms of
its solutions.

\begin{definition}[Frobenius structure]\label{frobeniusstructure}
The equation $L(y)=0$ is said to have a
$p$-adic Frobenius structure if there exists an operator 
\[
\sA=\sum_{i=0}^{n-1}A_i(t)\theta^i\in\Z_p\lb t\rb[\theta]
\]
with $A_0(0)=1$ such that for every solution $y(t)$ of $L(y)=0$ the
composition $\sA(y(t^\sigma))$ is again a solution of this differential equation.

In this paper we will call the numbers $\alpha_i=A_i(0)$ the Frobenius constants.
\end{definition}

\begin{remark}\label{frobenius-in-standard-basis} In the standard basis of solutions~\eqref{standardbasis} the operator $\sA$ acts by the formula
\[
\sA(y_i(t^\sigma)) = p^i \sum_{j=0}^i \alpha_j \, y_{i-j}(t), \quad i=0,\ldots, n-1.
\]
This will be shown in Section \ref{sec:thmproofs}.
\end{remark}

\begin{remark}
Our definition of Frobenius structure is adapted to the integrality questions considered in this paper.
First of all, a
Frobenius structure is usually defined over a ring $R$, which is a (sub)ring of so-called 
analytic elements. These are $p$-adic limits of rational functions. Granted that we take a Frobenius
structure over $\Z_p\lb t\rb$, this would give us $p^iA_i(t)\in\Z_p\lb t\rb$ for $i=0,\ldots,n-1$. This is weaker
than $A_i(t)\in\Z_p\lb t\rb$, which is in our definition. The latter condition is crucial in our paper.
We believe that this condition holds in families of Calabi-Yau varieties which possess sufficiently many symmetries.
\end{remark}

Recall the canonical coordinate $q(t)=t\exp(F_1/F_0)$ and the inverse
series $t(q)$ which we called the mirror map.
It is expected that for Picard-Fuchs equations of
MUM-type these series are in $\Z_p\lb t\rb$ for all but finitely
many primes $p$. So far this has been shown for a
large number of special cases, e.g \cite{LY95}, \cite{KR10}, \cite{Ro14}, \cite{ERR16}. In 
Section \ref{sec:thmproofs} we prove the following.

\begin{theorem}[$p$-integrality of the mirror map]\label{mirror-integrality}
Suppose that $L(y)=0$ has a $p$-adic Frobenius structure. 
Then $\exp(F_1/F_0)\in\Z_p\lb t\rb$. 
\end{theorem}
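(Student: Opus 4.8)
The plan is to exploit the Frobenius structure on the level of the two solutions $y_0$ and $y_1$ and extract a first-order differential relation for $F_1/F_0$ whose coefficients are manifestly $p$-adically integral, then invoke a Dwork-type lemma to conclude integrality of the exponential. Concretely, by Remark~\ref{frobenius-in-standard-basis} the operator $\sA$ acts on the bottom of the solution filtration by
\[
\sA\bigl(y_0(t^\sigma)\bigr) = \alpha_0\, y_0(t), \qquad
\sA\bigl(y_1(t^\sigma)\bigr) = p\bigl(\alpha_0\, y_1(t) + \alpha_1\, y_0(t)\bigr),
\]
with $\alpha_0 = A_0(0) = 1$. Writing $g := F_1/F_0 = y_1/y_0 - \log t$, the ratio $y_1/y_0$ is a primitive of a rational-type object, and the two displayed identities give
\[
\frac{y_1(t^\sigma)}{y_0(t^\sigma)} = p\,\frac{y_1(t)}{y_0(t)} + p\,\alpha_1.
\]
Substituting $y_1/y_0 = \log t + g$ and using $\log t^\sigma = p\log t - \sum_{m\ge 1}\frac{1}{m}(-p\,t\,u(t))^m$ yields
\[
g(t^\sigma) = p\, g(t) + p\,\alpha_1 + \sum_{m\ge 1}\frac{1}{m}(-p\,t\,u(t))^m,
\]
an explicit functional equation for $g$ in which every term on the right other than $p\,g(t)$ lies in $p\,\Z_p\lb t\rb$, since $\sum_{m\ge1}\frac1m(-p t u(t))^m \in p\,t\,\Z_p\lb t\rb$ by the standard estimate $v_p(p^m/m)\ge 1$.

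The next step is to translate this additive functional equation into a multiplicative one for $h := \exp(g) = \exp(F_1/F_0)\in 1 + t\Q_p\lb t\rb$. Exponentiating gives
\[
h(t^\sigma) = h(t)^p \cdot \exp\!\Bigl(p\,\alpha_1 + \sum_{m\ge 1}\tfrac{1}{m}(-p\,t\,u(t))^m\Bigr)
= h(t)^p \cdot c \cdot E(t),
\]
where $c = \exp(p\,\alpha_1)\in\Z_p^\times$ (note $v_p(p\alpha_1)\ge 1$, since $\alpha_1\in\Z_p$ because $A_1(t)\in\Z_p\lb t\rb$, so the exponential converges to a unit) and $E(t) = \exp\bigl(\sum_{m\ge1}\frac1m(-p t u(t))^m\bigr) = (1 + t u(t))^{-p}\cdot(\text{something})$ — more usefully, $E(t)\in 1 + p\,t\,\Z_p\lb t\rb\subset\Z_p\lb t\rb$, again because its logarithm lies in $p\,t\,\Z_p\lb t\rb$. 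Thus $h$ satisfies $h(t^\sigma) = \gamma(t)\,h(t)^p$ with $\gamma(t)\in\Z_p\lb t\rb$ and $\gamma(0) = c\in\Z_p^\times$.

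Finally I would apply Dwork's lemma on power series: if $h\in 1 + t\Q_p\lb t\rb$ and $h(t^\sigma)/h(t)^p\in\Z_p\lb t\rb$ for a Frobenius lift $\sigma$ of the form $t^\sigma = t^p(1+p t u(t))$, then $h\in\Z_p\lb t\rb$. This is proved by induction on the $t$-adic valuation: if $h\equiv 1\bmod t^N$ but $h\not\equiv 1\bmod t^{N+1}$ with the degree-$N$ coefficient $b_N\notin\Z_p$, then comparing coefficients of $t^{pN}$ in $h(t^\sigma) = \gamma(t) h(t)^p$ — where the left side contributes $b_N\,(t^p)^N + \cdots$ and the right side contributes $p\,b_N\,t^{pN}$ plus terms built from lower coefficients of $h$ (which are integral by induction) and from $\gamma$ — forces $b_N\in\Z_p$, a contradiction; one handles the general coefficient by a similar weight/valuation bookkeeping. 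The main obstacle is this last induction: one must set up the comparison of coefficients carefully so that the ``new'' coefficient of $h$ appears exactly once on each side with a controllable $p$-adic discrepancy (here the factor $p$ versus $1$), and check that all genuinely new contributions on the right are integral; the shape $t^\sigma = t^p(1 + p t u(t))$ is precisely what guarantees this separation of scales. Since this Dwork-type lemma is classical and elementary, the real content of the theorem is the derivation of the clean functional equation $h(t^\sigma) = \gamma(t) h(t)^p$ from the Frobenius structure, which the computation above accomplishes.
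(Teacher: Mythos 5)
There is a genuine gap at the crux of your argument: the passage from the two operator identities $\sA(y_0^\sigma)=y_0$ and $\sA(y_1^\sigma)=p(y_1+\alpha_1y_0)$ to the ratio identity $y_1(t^\sigma)/y_0(t^\sigma)=p\,y_1(t)/y_0(t)+p\alpha_1$ is invalid. The operator $\sA$ is a differential operator, not a scalar, so you cannot divide the two identities; $\sA(y_1^\sigma)/\sA(y_0^\sigma)$ has nothing to do with $y_1^\sigma/y_0^\sigma$ in general. Worse, the claimed identity is simply false for a general Frobenius lift: exponentiating it says $q(t^\sigma)=e^{p\alpha_1}q(t)^p$ with $q=t\exp(F_1/F_0)$, which determines $t^\sigma$ uniquely (it is essentially the excellent lift), whereas the hypothesis allows any lift $t^\sigma=t^p(1+ptu(t))$. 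A toy check with $L=\theta^2$, $y_0=1$, $y_1=\log t$ (so $F_1=0$, $g=0$) shows your functional equation would force $\log(1+ptu(t))$ to be the constant $p\alpha_1$, which fails for every nonzero $u$. So the "clean functional equation" you exponentiate, which you yourself identify as the real content of the theorem, has not been established; everything after it (the multiplicative Dieudonn\'e--Dwork step, which is fine and is the paper's Lemma \ref{DieudonneDwork} in disguise) rests on it.

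What is actually true, and what the paper proves, is only the integrality statement $\frac1p\frac{F_1^\sigma}{F_0^\sigma}-\frac{F_1}{F_0}\in\Z_p\lb t\rb$, not an exact formula for $F_1^\sigma/F_0^\sigma$. To get it one has to work with the operator $\sA$ itself: setting $v=y_1/y_0$, one writes $\sA(v^\sigma y_0^\sigma)=v^\sigma\sA(y_0^\sigma)+\sA_1(\theta(v^\sigma))$ for some $\sA_1\in\Z_p\lb t\rb[\theta]$ (a Leibniz-type rearrangement), first deduces $y_0\in1+t\Z_p\lb t\rb$ and then $\theta v\in1+t\Z_p\lb t\rb$ from the iteration Lemma \ref{dwork-expansion}, concludes $\theta(v^\sigma)\in p\Z_p\lb t\rb$, and only then extracts $\frac1p v^\sigma-v\in\Z_p\lb t\rb$ from the identity $v^\sigma y_0+\sA_1(\theta(v^\sigma))=p(v+\alpha_1)y_0$. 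These intermediate integrality steps (in particular the two applications of Lemma \ref{dwork-expansion}) are exactly what your shortcut skips, and they cannot be bypassed by pretending $\sA$ acts as a scalar on the solutions.
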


Let us now suppose that $\exp(F_1/F_0)\in\Z_p\lb t\rb$ and $n\ge3$.
Take $q=t\exp(F_1/F_0)\in t+t^2\Z_p\lb t\rb$.
We choose the special Frobenius lift given by $q^\sigma=q^p$, called the \emph{excellent lift}
in \cite{BeVlIII}. 
Define $\theta_q=q\frac{d}{dq}$. Then the operator $\theta_q^2$ annihilates the functions
$y_1/y_0=\log q$ and $y_0/y_0=1$. 

\begin{definition}
The function $K(q)=\theta_q^2(y_2/y_0)$ is called the Yukawa coupling associated to
the operator $L$. It is equal to $1+\theta_q^2V$, where  we consider 
$V=F_2/F_0 - \frac12(F_1/F_0)^2$ as function of $q$. 
\end{definition}

As in the quintic example we expand
\[
K(q)=1+\sum_{r\ge1}\frac{A_rq^r}{1-q^r}.
\]
In the following theorem we require that the Frobenius constant $\alpha_1$ is $0$.
In general the Frobenius constants $\alpha_i$ are notoriously difficult to compute, but 
fortunately we need only $\alpha_1$.  This is dealt with in the proof of Corollary
\ref{quintic-diagonal}, which in its turn uses \cite[Proposition 7.7]{BeVlIII}.

\begin{theorem}\label{yukawa2}
Suppose that $L(y)=0$ has a $p$-adic Frobenius structure with $\alpha_1=0$.
Then $A_r/r^2\in\Z_p$ for all $r\ge1$.
\end{theorem}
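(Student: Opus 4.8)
The plan is to leverage the Frobenius structure to produce a functional equation for the Yukawa coupling $K(q)$ under the substitution $q\mapsto q^p$, and then extract $p$-integrality of the Lambert coefficients $A_r/r^2$ from that equation. First I would pass to the excellent Frobenius lift $q^\sigma=q^p$, which is legitimate since Theorem \ref{mirror-integrality} gives $q=t\exp(F_1/F_0)\in t+t^2\Z_p\lb t\rb$. In the standard basis, Remark \ref{frobenius-in-standard-basis} tells us that $\sA$ sends $y_i(t^\sigma)$ to $p^i\sum_{j=0}^i\alpha_j\,y_{i-j}(t)$; with the hypothesis $\alpha_1=0$ this reads $y_0^\sigma=\alpha_0 y_0$, $y_1^\sigma=p\alpha_0 y_1$ (no $y_0$ term), and $y_2^\sigma=p^2(\alpha_0 y_2+\alpha_2 y_0)$. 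Taking ratios, $y_1^\sigma/y_0^\sigma=p\,(y_1/y_0)=p\log q$, consistent with $q^\sigma=q^p$, and crucially
\[
\frac{y_2^\sigma}{y_0^\sigma}=p^2\,\frac{y_2}{y_0}+p^2\alpha_2.
\]
Applying $\theta_{q^\sigma}^2=p^{-2}\theta_q^2$ (since $q^\sigma=q^p$ means $\theta_{q^\sigma}=p^{-1}\theta_q$ acting on the relevant series) to both sides, the constant $\alpha_2$ is killed and we obtain the clean identity $K(q^p)=K(q)$, i.e. $\theta_q^2 V$ is invariant under $q\mapsto q^p$ up to the normalization. More precisely one gets $K(q)=K(q^p)$ as an identity of power series in $\Z_p\lb q\rb$ — here I must be careful that $K(q)\in\Z_p\lb q\rb$ to begin with, which itself needs an argument (see below).

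The heart of the matter is then the following elementary lemma about Lambert series: if $K(q)=1+\sum_{r\ge1}\frac{A_rq^r}{1-q^r}=1+\sum_{m\ge1}\big(\sum_{d\mid m}A_d\big)q^m$ lies in $\Z_p\lb q\rb$ and satisfies $K(q)=K(q^p)$, then $A_r/r^2\in\Z_p$ for all $r$. I would prove this by strong induction on the $p$-adic valuation of $r$. Write $B_m=\sum_{d\mid m}A_d$ for the ordinary $q$-expansion coefficients. The relation $K(q)=K(q^p)$ says $B_m\in\Z_p$ for all $m$ (integrality of $K$), and that $B_m=0$ whenever $p\nmid m$, while $B_{pm}=B_m$. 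Möbius inversion gives $A_r=\sum_{d\mid r}\mu(r/d)B_d$; one then estimates $v_p(A_r)$ by splitting the divisor sum according to the power of $p$ dividing $d$ and using $B_{p^{k}s}=B_{p\cdot p^{k-1}s}$ telescoping together with $B_s=0$ for $p\nmid s$, $s>1$. The bookkeeping shows that the contributions combine so that $p^{2v_p(r)}\mid A_r$, which is exactly $A_r/r^2\in\Z_p$ once we also know the prime-to-$p$ part is harmless (it is, since $r^2$ and $p$ are coprime there, and $A_r\in\Z_p$ automatically as an integral combination of the $B_d\in\Z_p$).

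The main obstacle I anticipate is establishing $K(q)\in\Z_p\lb q\rb$ — equivalently, integrality of the coefficients $B_m=1+\sum_{d\mid m}A_d$ of $1+\theta_q^2V$ — since a priori we only know $V=F_2/F_0-\tfrac12(F_1/F_0)^2$ has $p$-adically unbounded denominators coming from $F_1,F_2$. This is where the full strength of the Frobenius structure (not just the ratio identity above) must be used: one combines the functional equation $K(q)=K(q^p)$ with a $p$-adic boundedness/congruence input — a Dwork-type argument showing that $K(q)\bmod p^N$ is determined by finitely many coefficients, or the Dieudonné–Dwork style observation that $K(q)\equiv K(q^p)\bmod p$ forces $K\in\Z_p\lb q\rb$ provided $K$ has bounded denominators, which in turn follows from the operator identity applied order by order. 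I would handle this by first proving $q\frac{d}{dq}(y_2/y_0)$, equivalently $\theta_q V$, already lies in $\Z_p\lb q\rb$ using the $i\le 1$ part of Remark \ref{frobenius-in-standard-basis} (a Kummer-congruence argument paralleling the proof of Theorem \ref{mirror-integrality}), then bootstrap to $\theta_q^2 V$. Once integrality of $K$ is in hand, the functional equation plus the Lambert-series lemma finish the proof.
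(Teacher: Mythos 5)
There is a genuine gap, and it is at the heart of your argument: the functional equation $K(q^p)=K(q)$ is false. You obtained it by dropping the operator $\sA$ from the Frobenius relations. Remark \ref{frobenius-in-standard-basis} says $\sA(y_i(t^\sigma))=p^i\sum_j\alpha_j y_{i-j}(t)$; it does \emph{not} say $y_i^\sigma=p^i\sum_j\alpha_j y_{i-j}$, and since $\sA=\sum_i A_i(t)\theta^i$ is a differential operator rather than a scalar, you cannot ``take ratios'' as you did. Your first ratio $y_1^\sigma/y_0^\sigma=p\log q$ happens to hold for the excellent lift $q^\sigma=q^p$, but the second, $y_2^\sigma/y_0^\sigma=p^2(y_2/y_0)+p^2\alpha_2$, would force $V_2(q^p)=p^2V_2(q)+p^2\alpha_2$, i.e.\ exact equality of the Yukawa couplings $K(q^p)=K(q)$. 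That would make every Lambert coefficient $B_m$ with $p\nmid m$, $m\ge 1$, vanish, which already fails for the quintic ($B_1=A_1\neq 0$). Consequently your Lambert-series lemma, whose hypothesis is the equality $K(q)=K(q^p)$, is never applicable, and the induction on $v_p(r)$ you sketch rests on relations ($B_m=0$ for $p\nmid m$, $B_{pm}=B_m$) that do not hold.

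What is actually available, and what the paper uses, is only a congruence-type relation. Keeping the operator: from $\mathcal{A}(1)=1$ and $\mathcal{A}(\log q)=\log q$ (these use $\alpha_0=1$, $\alpha_1=0$) one deduces $\mathcal{A}=1+\mathcal{A}_2\theta_q^2$ with $\mathcal{A}_2\in\Z_p\lb q\rb[\theta_q]$; feeding $y_2/y_0$ through this identity, applying $\theta_q^2$ and dividing by $p^2$ yields an operator identity of the form $(1+\theta_q^2\mathcal{A}_2)\bigl(K(q^p)\bigr)=K(q)$, which by the analogue of Lemma \ref{dwork-expansion} gives $K(q)\in\Z_p\lb q\rb$ (this is the integrality step you correctly flagged as the obstacle, and your proposed bootstrap is in the right spirit). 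Plugging back, one gets only $\phi(q):=\frac{1}{p^2}V_2(q^p)-V_2(q)\in\Z_p\lb q\rb$, hence $K(q^p)-K(q)=\theta_q^2\phi$, i.e.\ $B_m\equiv B_{m/p}\pmod{m^2\Z_p}$ rather than $B_m=B_{m/p}$. The M\"obius-inversion argument must then be run with these congruences (as in Lemma \ref{instanton-division} with $s=2$) to conclude $A_r/r^2\in\Z_p$. So your overall architecture (excellent lift, reduction to an operator fixing $1$ and $\log q$, Lambert-series bookkeeping) can be repaired, but the step producing the exact invariance of $K$ must be replaced by the congruence statement, and the final lemma restated accordingly.
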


The $p$-integrality of $A_r/r^3$ follows if the differential equation
has order $4$ and is  self-dual. This is precisely the type of operators that is studied
in \cite{AESZ10} and which are named Calabi-Yau operators.
There we find that an operator $L=\theta^4+a_3\theta^3+a_2\theta^2+a_1\theta
+a_0$ is called self-dual if
\[
a_1=\frac12 a_2a_3-\frac18 a_3^3+\theta(a_2)-\frac34 a_3\theta(a_3)-\frac12 \theta^2(a_3).
\]
There are more intrinsic characterizations of self-duality but what matters for us is that
in the case of self-duality we have
\[
\left|\begin{matrix}y_0 & y_3\\ \theta y_0 & \theta y_3\end{matrix}\right|=
\left|\begin{matrix}y_1 & y_2\\ \theta y_1 & \theta y_2\end{matrix}\right|.
\]

\begin{theorem}[$p$-integrality of instanton numbers]\label{yukawa3}
Suppose that the operator $L$ of order $n=4$ is self-dual and the differential equation $L(y)=0$ has a 
$p$-adic Frobenius structure
with $\alpha_1=0$. Then $A_r/r^3\in\Z_p$ for all $n\ge1$.
\end{theorem}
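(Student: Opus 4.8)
The plan is to leverage Theorem \ref{yukawa2}, which already gives $A_r/r^2 \in \Z_p$, and to extract one further factor of $r$ from the self-duality hypothesis. First I would unwind what self-duality buys us: the Wronskian-type identity
\[
\left|\begin{matrix}y_0 & y_3\\ \theta y_0 & \theta y_3\end{matrix}\right|
= \left|\begin{matrix}y_1 & y_2\\ \theta y_1 & \theta y_2\end{matrix}\right|
\]
should be rewritten in terms of the canonical coordinate $q$ and the normalized solutions. Dividing through by $y_0^2$ and using $y_1/y_0 = \log q$, $\theta = \theta_q \cdot (\theta_q \log q)^{-1}\cdot(\text{something})$ — more precisely passing everything to the variable $q$ via the excellent lift $q^\sigma = q^p$ — the right-hand side becomes essentially $\theta_q(y_2/y_0) - (y_2/y_0)\,\theta_q(\log q)/\log q$ type expression, which collapses to a clean formula involving $K(q)$. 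The upshot I expect is an identity of the shape $y_3/y_0 = \tfrac16\log^3 q + (\text{lower order in }\log q)$ together with a relation expressing $\theta_q(y_3/y_0)$ through $K(q)$ and $V(q)$, so that $y_3/y_0$ is, up to the obvious logarithmic polynomial part, a primitive (in the $\theta_q$ sense) of something built from the Yukawa coupling. Concretely one should land on a formula like $\theta_q^3(y_3/y_0) = $ (an expression in $K$ and its $\theta_q$-derivatives), or rather that the "new" power series piece $F_3/F_0$ rearranged appropriately has $\theta_q$-derivative controlled by $K$.

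Next I would set up the Frobenius-structure machinery exactly as in the proof of Theorem \ref{yukawa2}. Using Remark \ref{frobenius-in-standard-basis} with $\alpha_1 = 0$ and the normalization $\alpha_0 = 1$, the operator $\sA$ acts on the standard basis by $\sA(y_i(t^\sigma)) = p^i\bigl(y_i(t) + \alpha_2 y_{i-2}(t) + \alpha_3 y_{i-3}(t) + \cdots\bigr)$. Specializing to $i=3$ and passing to the variable $q$ (where $\sigma$ is $q \mapsto q^p$), this gives a congruence of the form $p^{-3}\sA$ applied to $y_3/y_0$ evaluated at $q^p$ equals $y_3/y_0$ at $q$ plus lower terms. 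Applying $\theta_q$ the required number of times and using that $\theta_q$ in the $q$-variable satisfies $\theta_q(h(q^p)) = p\,(\theta_q h)(q^p)$, one converts this into a statement that the relevant power series — call it $\Phi(q)$, the series whose $\theta_q$-derivatives encode $K(q) - 1 = \sum_{r\ge1}\frac{A_r q^r}{1-q^r}$ — satisfies a functional equation $p^3 \Phi(q) \equiv (\text{operator in }\Z_p\lb q\rb[\theta_q])\,\Phi(q^p) \bmod{(\text{known integral terms})}$. The gain over Theorem \ref{yukawa2} is the extra power of $p$: where the Yukawa argument used a factor $p^2$ from $\sA(y_2(t^\sigma)) = p^2(\cdots)$, here the $i=3$ level supplies $p^3$.

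The endgame is the standard Lambert-series / Dwork-congruence extraction of $p$-divisibility: writing $\Phi(q) = \sum_r \phi_r q^r$, the functional equation together with the integrality of the auxiliary data forces $\phi_{pr} - p^{3}\,(\text{coefficient from }\Phi(q^p)) \in \Z_p$, and by an induction on the $p$-adic valuation of $r$ (exactly the bootstrap used to go from $A_r/r^2$ to the sharper bound) one peels off $v_p(r)$ factors of $p$ at each prime, yielding $A_r/r^3 \in \Z_p$ for all $r \ge 1$. I expect two places to require real care. The first, and likely the main obstacle, is the passage from the self-duality Wronskian identity to a usable closed formula for $y_3/y_0$ in the $q$-variable that meshes cleanly with the Frobenius action on $y_3$ — one must check that the "self-dual" combination is precisely the one the operator $\sA$ respects, i.e.\ that no cross-terms involving $\alpha_2, \alpha_3$ spoil the $q$-variable identity, and this is where the hypothesis $\alpha_1 = 0$ is genuinely used (it kills the term that would otherwise obstruct the third integration). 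The second is bookkeeping the logarithmic polynomial parts $\tfrac16\log^3 q$ etc.\ so that after applying $\theta_q^2$ or $\theta_q^3$ they drop out and leave a bona fide power series, together with verifying that the elementary identity relating $\sum A_r q^r/(1-q^r)$ to $\theta_q^k$ of a Lambert-type primitive introduces only the harmless combinatorial denominators already handled in Theorem \ref{yukawa2}.
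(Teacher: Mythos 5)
Your plan follows the paper's proof essentially step for step: with the excellent lift $q^\sigma=q^p$ and $\alpha_1=0$ one gets $\mathcal{A}=1+\mathcal{A}_2\theta_q^2$, the self-duality Wronskian written in the $q$-variable evaluates to $2V_2+\theta_q V_3=0$ (so $\theta_q^3V_3=-2(K-1)$, which is exactly your ``primitive of the Yukawa coupling''), the level $i=3$ Frobenius identity divided by $p^3$ has its $\log q$-terms cancelled by the level-2 identity from the proof of Theorem \ref{yukawa2}, leaving $p^{-3}V_3(q^p)-V_3(q)\in\Z_p\lb q\rb$, and Lemma \ref{instanton-division} with $s=3$ finishes. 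The two places you flag as needing care are precisely the points the paper handles, and they work out as you anticipate.
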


We remark that the theorem is sensitive to the normalization of $t$.
In other words, if we replace $t$ by $2t$ for example,
the theorem is not true anymore. The value $\alpha_1$ has become non-zero. It is worth 
mentioning that the instanton numbers and the Frobenius structure behave nicely
under the change of variables $t \to t^w$. One can easily check that the new Yukawa coupling is
given by $K(q^w)$ and the new constants $\alpha_i$ of the Frobenius structure are given by 
$\alpha^i/w^i$, $i=0,1,\ldots$

Theorems \ref{mirror-integrality}, \ref{yukawa2} and \ref{yukawa3} are proven in Section 
\ref{firstproofs}.

Let us now sketch the set up which we use to construct the Frobenius structure for differential operators. We start with a Laurent polynomial $f(\v x)$ in $\v x=(x_1,\ldots,x_n)$ with coefficients in $\Z[t]$. 
Let $\Delta$ be the Newton polytope of $f$ and $\Delta^\circ$ its interior.
Let $R$ be a ring containing $\Z[t]$.
By $\Omega_f$ we denote the $R$-module generated by the functions
$(k-1)!A/f^k$, where $A$ is a Laurent polynomial with coefficients in $R$
and support in $k\Delta$ and $k\ge1$. 
We define $\Omega_f^\circ$ similarly, consisting of rational 
functions as above, but with support of $A$ in $k\Delta^\circ$.
In the notation of Dwork crystals I,II we use $\Omega_f:=\Omega_f(\Delta)$ 
and $\Omega_f^\circ:=\Omega_f(\Delta^\circ)$. 
By $d\Omega_f$ we define the module spanned by the partial derivatives of the
form $x_i\frac{\partial}{\partial x_i}(\omega)$ with $\omega\in\Omega_f$ and $i=1,\ldots,n$.
We will denote the partial derivations $x_i\frac{\partial}{\partial x_i}$ by $\theta_i$. 

When $f$ has suitable regularity properties, there exists a polynomial
$D_f(t)\in\Z[t]$ such that the quotient $\Omega_f^\circ/d\Omega_f$
is a free module over $R=\Z[t,1/D_f]$ of finite rank. 
It can roughly be identified with the $(n-1)$-st De Rham cohomology of
the zero set of $f$ (although we shall not use this). 

The derivation $\theta=t\frac{d}{dt}$ on $\Z[t]$ can be extended to 
$\Omega_f$ in the obvious way. On easily checks that $\theta$ sends $\Omega_f^\circ$
to itself as well as $d\Omega_f$.
Hence $\theta$ maps $\Omega_f^\circ/d\Omega_f$ to itself.
This gives us the so-called Gauss-Manin connection on $\Omega_f^\circ/d\Omega_f$.

Our basic example is
\[
f=1-t\left(x_1+x_2+x_3+x_4+\frac{1}{x_1x_2x_3x_4}\right).
\]
Then $\Omega_f^\circ /d \Omega_f$ is a free $\Z[t,1/((5t)^6-5t)]$-module of
rank 4 with a basis given by $\theta^i(1/f)$ with $i=0,1,2,3$.
Clearly $\theta^4(1/f)$ depends on these and the
relation is given by $L(1/f)\in d\Omega_f$, where $L$ is the linear differential
operator
\[
L=\theta^4-(5t)^5(\theta+1)(\theta+2)(\theta+3)(\theta+4).
\]
This operator is related to
the hypergeometric operator with which we began our introduction via the change of variable 
$t \to t^5$. We call it the {\it quintic example}.

Our second main example illustrates the use of symmetries of $f$.
Let  $\sG$ be a finite group of monomial substitutions under which $f$ is invariant.
A monomial substitution has the form $x_i\to \v x^{\v a_i}, i=1,\ldots,n$
where $\v a_i$ are vectors of integers which form a basis of $\Z^n$.
We denote the submodule of invariant rational functions by $(\Omega_f^\circ)^{\sG}$.
Consider
\[
f=1-t\left(x_1+\frac{1}{x_1}+x_2+\frac{1}{x_2}+x_3+\frac{1}{x_3}+x_4+\frac{1}{x_4}\right).
\]
It turns out that $\Omega_f^\circ /d \Omega_f$ has rank $10$ as module over
$\Z[t,(2t(1-80t^2+1024t^4))^{-1}]$. However, $f$ is invariant
under the group $\sG$ of monomial transformations generated by the permutations of $x_1,x_2,x_3,x_4$
and $x_i\to x_i^{\pm1}$ for all $i$. We denote the $\sG$-invariant elements of $\Omega_f^\circ$ 
by $(\Omega_f^\circ)^\sG$. The quotient module 
$(\Omega_f^\circ)^\sG /d \Omega_f$ turns out to have rank $4$ and is generated by 
$\theta^i(1/f)$ for $i=0,1,2,3$.
Then $L(1/f)\in d\Omega_f$ with
\begin{eqnarray*}
L &=&(1024t^4-80t^2+1)\theta^4+64(128t^4-5t^2)\theta^3\\
&&+16(1472t^4-33t^2)\theta^2+32(896t^4-13t^2)\theta+128(96t^4-t^2).
\end{eqnarray*}
This is \#16 in the database of Calabi-Yau equations \cite{AESZ10} with $z=t^2$.
We call it the {\it diagonal example}.\label{setup}

Prompted by these examples we now restrict the Laurent polynomial $f(\v x)$ to
a polynomial of the form $f=1-tg(\v x)$, where $g$ is a Laurent polynomial
in the variables $\v x=(x_1,\ldots,x_n)$ with coefficients in $\Z$.
We also assume that its Newton polytope $\Delta \subset \R^n$ is \emph{reflexive}.

We remind the reader that a lattice polytope $\Delta\subset\R^n$
is reflexive if it is of maximal dimension, contains $\v 0$ and each of its
codimension~1 faces can be given by an equation $\sum_{i=1}^na_ix_i=1$ 
with coefficients $a_i\in\Z$. It follows from this definition that
\emph{$\v 0$ is the unique lattice point in $\Delta^\circ$}. Reflexivity is a 
requirement needed for the $p$-adic considerations later on.

Let us remark that in Dwork crystals II \cite{BeVl20II} we prove congruences of 
Dwork type for the power series $y_0(t)=\sum_{m\ge0}g_mt^m$, with 
$g_m=\text{constant term of }g(\v x)^m$,
which are associated to $\Omega_f$ for these particular $f$. The above relation 
$L(1/f) \in d \Omega_f$ implies that $y_0(t)$ is a power series solution to the
differential equation $L(y)=0$. This follows from the properties of the period map
constructed in \cite[\S 2]{BeVl20II}. See also the explicit families in 
\cite[\S 7]{BeVlIII}.

Another property of reflexive polytopes which we will use is that to every
lattice point $\v u \in \Z^n$ there is a unique integer $d \ge 0$ such that $\v u$
lies on the boundary of $d \Delta$. We call this integer $d$ the degree of both $\v u$
and its respective monomial $\v x^\v u$. For a Laurent polynomial 
$A \in R[x_1^{\pm 1},\ldots,x_n^{\pm 1}]$ its degree $\deg(A)$ is the maximum degree
of the monomials it contains. Suppose that $R$ has the form $\Z[t,1/D_f(t)]$ as above with $D_f(t)\in\Z[t]$
and $D_f(0)\ne0$. A Laurent polynomial 
$A(x) = \sum_{\v u}a_{\v u}\v x^{\v u}$ with coefficients in 
$R$ is called {\it admissible} if for every monomial $\v x^{\v u}$ 
in $A$ the $t$-adic order of its coefficient $a_{\v u}$ is greater or
equal to $\deg(\v x^{\v u})$. For example, $f(\v x)=1-tg(\v x)$ is admissible.

Throughout the paper we will work with the submodule 
\[
\sO_f  = \left\{ (k-1)!\frac{A(\v x)}{f(\v x)^k} \;\Big| \; k \ge 1, \, A 
\text{ is admissible}, \, Supp(A) \subset k \Delta  \right\} \subset \Omega_{f}.
\]
We will call elements of $\sO_f$ admissible rational functions. 
The submodule $\sO_f^\circ \subset \sO_f$ is defined by the stronger condition 
$Supp(A) \subset k \Delta^\circ$. The submodule of derivatives 
$d \sO_f \subset \sO_f$ is the $R$-module generated by 
$x_i\frac{\partial}{\partial x_i}(\omega)$ for $\omega \in \sO_f$ and $i=1,\ldots,n$.

The main motivation for choosing $\sO_f$ is that in our examples the quotient module
$\sO_f^\circ/d\sO_f$ is of finite rank over the ring $R=\Z[t,1/D_f(t)]$ for some $D_f(t)\in\Z[t]$
with $D_f(0)\ne0$. So when $p$ does not divide $D_f(0)$, this ring can be embedded in $\Z_p\lb t\rb$,
which is convenient for our $p$-adic considerations.

Letting $\sG$ be a group of monomial automorphisms of $f(\v x)$, 
we denote the submodule of $\sG$-invariant admissible rational functions 
by $(\sO_f^\circ)^{\sG}$. 
It is not hard to verify that $\gamma \circ\theta=\theta\circ \gamma$ for all $\gamma\in \sG$.
In particular this implies that $\theta$ maps $(\sO_f^\circ)^\sG/d\sO_f$ to itself.
We now assume that
$M:=(\sO_f^\circ)^\sG/d\sO_f$ is free of finite rank $n$ and is generated by the
elements $\theta^i(1/f)$ for $i=0,1,\ldots,n-1$. We actually need more refined 
assumptions to be satisfied, in which case we call $M$ a 
\emph{cyclic $\theta$-module of MUM-type}. See Definition \ref{cyclicMUM} for the precise formulation.
Just as in the above two examples we can associate to a cyclic $\theta$-module of
MUM-type an $n$-th order linear differential operator $L$ of MUM-type such that $L(1/f) \in d\sO_f$. We
call $L(y)=0$ the \emph{Picard-Fuchs differential equation} associated to $M$. These equations
are precisely the type of differential equations that we want to consider.

To introduce the Frobenius structure on the Picard-Fuchs equation we choose
an odd prime $p$ and assume that $\Z[t,1/D_f(t)]$ can be embedded in
$\Z_p\lb t\rb$. Let $\sigma$ be a Frobenius lift of $\Z_p\lb t \rb$ satisfying the assumption
$t^\sigma \in t^p(1 + p t \Z_p\lb t \rb)$ which was introduced earlier. We recall the Cartier operator
$\cartier$ which was introduced
in Dwork crystals I, \cite{BeVl20I}. Using a vertex
$\v b$ of $\Delta$ we can expand any element of $\sO_f$ as a formal Laurent
series with support in the cone spanned by $\Delta-\v b$. On such Laurent series
the Cartier operator acts as
\be\label{Cartier-on-expansions}
\cartier:\sum_{\v k\in C(\Delta-\v b)}a_{\v k}\v x^{\v k}\mapsto
\sum_{\v k\in C(\Delta-\v b)}a_{p\v k}\v x^{\v k}.
\ee
We extend $\sigma$ to $\sO_f$ by applying it to the coefficients of rational functions.
For example, $f^\sigma(\v x)=1-t^\sigma g(\v x)$.
Let $\sO_{f^\sigma}$ be the $\Z_p\lb t \rb$-module generated by the
$\sigma$-images of the elements of $\sO_f$.
The numerators of these images are admissible with respect to $t^\sigma$ instead of $t$.
Then, in \cite[Proposition 2.7]{BeVlIII}
it is shown that $\cartier$ maps $\hat\sO_f^\circ$ to $\hat\sO_{f^\sigma}^\circ$,
where $\hat\sO_f^\circ$ denotes the $p$-adic completion of $\sO_f^\circ$ and similarly for 
$\hat\sO_{f^\sigma}^\circ$. Moreover, when
\[
p \nmid \#\sG
\]
our Cartier operator maps $\sG$-symmetric functions $(\hat \sO_f^\circ)^\sG$ into $(\hat \sO_{f^\sigma}^\circ)^\sG$. This can be seen by observing that the symmetrization
operator $\omega \mapsto \frac{1}{\#\sG}\sum_{\gamma\in\sG}\gamma(\omega)$ commutes with $\cartier$. 

One can easily deduce from~\eqref{Cartier-on-expansions} that $\cartier \circ \theta_i = p \, \theta_i \circ \cartier$ for $i=1,\ldots,n$, from which it follows that our Cartier operation maps derivatives $d\hat\sO_f$ into $d\hat\sO_{f^\sigma}$. Hence $\cartier$ descends to a $\Z_p\lb t \rb$-linear map 
\[
\cartier: (\hat \sO_f^\circ)^\sG /d\hat\sO_f \to (\hat \sO_{f^\sigma}^\circ)^\sG / d\hat\sO_{f^\sigma}.
\]
When $(\sO_f^\circ)^\sG/d\sO_f$ are cyclic $\theta$-modules of rank $n$, this implies existence of power series $\lambda_0,\ldots,\lambda_{n-1} \in \Z_p\lb t \rb$ such that
\[
\cartier(1/f) = \sum_{i=0}^{n-1} \lambda_i(t) (\theta^i(1/f))^\sigma \mod {d\hat\sO_{f^\sigma}}.
\]
We shall explicitly construct a $p$-adic Frobenius structure $\sA$ for the Picard-Fuchs differential equation $L(y)=0$ using the series $\lambda_i(t)$ in this formula. It is crucial for our construction to know that the derivatives $\theta^i(1/f)$, $i=0,\ldots,n-1$ are linearly independent not just modulo $d\sO_f$ but also modulo $d\hat\sO_f$. This would particularly imply that the coefficients $\lambda_i(t)$ in the above formula are defined uniquely. Unfortunately, we were only able to show independence of derivatives modulo $d\hat\sO_{f}$ under the so-called $n$-th Hasse--Witt condition, which is recalled in Section \ref{sec:HWC}. 

\begin{proposition}\label{cartier2frobenius}
Suppose $f=1-tg(\v x)$ has a reflexive Newton polytope $\Delta$ and let $\sG$ be a group of monomial
automorphisms of $g$. Suppose that $(\sO_f^\circ)^\sG /d \sO_f$ is a cyclic $\theta$-module of MUM-type of
rank $n$.
Suppose $p\nmid\#\sG\times D_f(0)$ and $p>n$. Then, if the $n$-th Hasse--Witt condition holds,
the corresponding Picard-Fuchs equation $L(y)=0$ has a $p$-adic Frobenius structure with $\alpha_i=p^{-i}\lambda_i(0)$, $0 \le i \le n$.
\end{proposition}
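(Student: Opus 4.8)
The plan is to promote the Cartier-operator identity
$\cartier(1/f) = \sum_{i=0}^{n-1}\lambda_i(t)\,(\theta^i(1/f))^\sigma \bmod d\hat\sO_{f^\sigma}$
into a genuine Frobenius operator $\sA$ on the level of solutions of $L(y)=0$, and then to read off the Frobenius constants from the values $\lambda_i(0)$. First I would recall the period pairing from Dwork crystals I,II: the period map attaches to each admissible rational function $\omega \in \sO_f^\circ$ a power series whose coefficients are constant terms of $\omega\cdot g^m$, and under this pairing the class of $1/f$ maps to $y_0(t)$ while the class of $\theta^i(1/f)$ maps to $\theta^i y_0$. Since $M = (\sO_f^\circ)^\sG/d\sO_f$ is cyclic of MUM-type generated by $\theta^i(1/f)$, $i=0,\ldots,n-1$, and $L(1/f)\in d\sO_f$, the differential module structure on $M$ is exactly $L$. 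Applying the period map to the displayed Cartier identity, and using the key compatibility $\cartier\circ\theta_i = p\,\theta_i\circ\cartier$ (which on periods forces a twist by $p$ on the solution side, cf. Remark \ref{frobenius-in-standard-basis}), produces a relation expressing $y_0(t^\sigma)$ in terms of the $\theta^i y_0(t)$ with coefficients built from the $\lambda_i$. Defining $A_i(t)$ from the $\lambda_i(t)$ by the appropriate $p$-power rescaling and change of basis between $\{\theta^i(1/f)^\sigma\}$ and $\{(\theta^i(1/f))^\sigma$ composed with $\sigma$ acting on $t\}$, one obtains the candidate operator $\sA = \sum A_i(t)\theta^i$.

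Next I would check the two defining properties in Definition \ref{frobeniusstructure}. Integrality $A_i(t)\in\Z_p\lb t\rb$ follows because $\cartier$ maps $(\hat\sO_f^\circ)^\sG$ into $(\hat\sO_{f^\sigma}^\circ)^\sG$ (this is exactly where $p\nmid\#\sG$ is used, via the symmetrizer commuting with $\cartier$), so the $\lambda_i(t)$ lie in $\Z_p\lb t\rb$; the base-change matrix between the two generating sets is upper-triangular with entries in $\Z_p\lb t\rb$ and unit diagonal because $t^\sigma\in t^p(1+pt\Z_p\lb t\rb)$, so inverting it stays in $\Z_p\lb t\rb$. That $\sA(y(t^\sigma))$ is again a solution for every solution $y$ is the content of the period-map translation above: $\cartier$ is a morphism of $\theta$-modules (up to the scaling $\theta\mapsto p\theta$), hence it sends the solution space of $L^\sigma$ — which, because the Cartier operator intertwines $M$ with its $\sigma$-twist, has the same underlying $L$ — to the solution space of $L$. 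The normalization $A_0(0)=1$ comes from $\cartier(1/f)\equiv 1/f^\sigma \bmod d\hat\sO_{f^\sigma}$ at $t=0$, i.e. $\lambda_0(0)=1$. Finally, evaluating at $t=0$ and comparing with Remark \ref{frobenius-in-standard-basis} gives $\alpha_i = A_i(0)$; tracing the $p$-power rescaling in the change of basis yields $\alpha_i = p^{-i}\lambda_i(0)$.

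The genuinely delicate step — and the reason the hypotheses $p>n$ and the $n$-th Hasse--Witt condition appear — is the \emph{uniqueness} of the $\lambda_i(t)$, equivalently the linear independence of $\theta^0(1/f),\ldots,\theta^{n-1}(1/f)$ not merely modulo $d\sO_f$ but modulo the $p$-adic completion $d\hat\sO_f$. Without this, the displayed Cartier identity does not pin down the $\lambda_i$, and the operator $\sA$ is not well-defined (or not visibly integral). I would handle this exactly as flagged in the text: invoke the $n$-th Hasse--Witt condition (Section \ref{sec:HWC}) to conclude that the period pairing remains nondegenerate after $p$-adic completion, so a relation $\sum c_i(t)\theta^i(1/f)\in d\hat\sO_f$ with $c_i\in\Z_p\lb t\rb$ forces all $c_i=0$. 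Granting that, the $\lambda_i(t)$ — hence the $A_i(t)$ and the constants $\alpha_i=p^{-i}\lambda_i(0)$ — are uniquely determined, and the construction above goes through. The remaining bookkeeping (that $p>n$ guarantees no small denominators enter when passing between $L$ and the module $M$, and when normalizing the solutions $y_i$) is routine and I would only sketch it.
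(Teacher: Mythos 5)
Your overall strategy (start from the Cartier congruence, rescale to produce $\sA$, invoke the Hasse--Witt hypothesis for independence modulo the completed module of derivatives) is the paper's, but two essential steps are wrong or missing. First, integrality of the $A_i$. The change of basis you describe is not unipotent: since $(\theta^i(1/f))^\sigma=(\theta^\sigma)^i(1/f^\sigma)$ with $\theta^\sigma=\frac{t^\sigma}{\theta(t^\sigma)}\theta$ and $\theta(t^\sigma)/t^\sigma=p(1+u(t))$, $u\in t\Z_p\lb t\rb$, its diagonal entries are $p^{-i}$ times units, not units. Hence $\lambda_i\in\Z_p\lb t\rb$ alone does not give $\sA\in\Z_p\lb t\rb[\theta]$, nor does it make $\alpha_i=p^{-i}\lambda_i(0)$ a $p$-adic integer; one needs the divisibility $\lambda_i\in p^i\Z_p\lb t\rb$. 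That is exactly Proposition~\ref{Lambda-entries-p-divisibility}, proved from the explicit $p$-adically convergent expansion $\cartier(1/f)=\sum_{k\ge0}\frac{p^k}{k!}\cdot k!\,Q_k(\v x)/(f^\sigma)^{k+1}$ followed by Dwork--Griffiths reduction, using $\ord_p(p^k/k!)\ge i$ for $k\ge i$ when $p\ge n$. This is where the hypothesis $p>n$ actually enters; it is not the routine bookkeeping you defer.

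Second, the claim that $\sA(y^\sigma)$ solves $L(y)=0$ for \emph{every} solution $y$ is not established by your period-map translation. The period map $c_{\v 0}$ only sees the holomorphic solution: it yields $F_0(t)=\sA(F_0^\sigma)$ (note also the direction: $y_0(t)$ is expressed through $(\theta^i y_0)(t^\sigma)$, not the reverse) and says nothing about the logarithmic solutions; the assertion that $\cartier$ is a morphism of $\theta$-modules and therefore carries the solution space of $L^\sigma$ to that of $L$ is not an argument, since $\cartier$ acts on rational functions, not on solutions. The paper closes this gap algebraically: apply $L$ on the left of $\cartier(1/f)\equiv\sA(1/f^\sigma)$ modulo $d\hat\sO_{f^\sigma}$, use $L(\cartier(1/f))=\cartier(L(1/f))\in d\hat\sO_{f^\sigma}$, perform right Euclidean division $L\circ\sA=\mathcal{B}\circ L^\sigma+\mathcal{N}$ with $\mathcal{N}$ of order less than $n$, and conclude $\mathcal{N}(1/f^\sigma)\in d\hat\sO_{f^\sigma}$; the $n$-th Hasse--Witt condition is needed precisely here, through Lemma~\ref{independence-mod-hat-Of} and Remark~\ref{independence-mod-hat-Of-sigma}, to force $\mathcal{N}=0$, so that $L^\sigma$ right-divides $L\circ\sA$ and $\sA(y^\sigma)\in\ker L$ for all $y$. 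In your write-up the Hasse--Witt condition is invoked only for uniqueness of the $\lambda_i$, which is motivation but not the place the proof needs it. Your remaining points ($\lambda_0(0)=1$ and hence $A_0(0)=1$ by evaluating the period identity at $t=0$, and $\alpha_i=A_i(0)=p^{-i}\lambda_i(0)$) do match the paper.
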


We give a proof of this proposition in Section \ref{firstproofs}.
In the literature one sees that existence of a $p$-adic Frobenius structure for Picard-Fuchs equations
is a well-known phenomenon. But as we said before, we need something a bit stronger in the form of
Definition \ref{frobeniusstructure}. We believe that families of Calabi-Yau
varieties with sufficient symmetry have a Frobenius structure that satisfies this definition,
but we are only able to give a few explicit examples.
Below we give two examples to which Proposition~\ref{cartier2frobenius} applies.
\smallskip

{\bf Simplicial family.}
\smallskip

Let
\[
g(\v x)=x_1+\cdots+x_n+\frac{1}{x_1\cdots x_n}.
\]
We call the corresponding family of varieties $1-tg(\v x)=0$ the simplicial family, refering to the
shape of the Newton polytope. For $\sG$ we take the group generated by the permutations of
$x_1,\ldots,x_n$ and $x_1\to (x_1\cdots x_n)^{-1}$. It is isomorphic to $S_{n+1}$.
In Section \ref{sec:simplicial} we show this family gives rise to a cyclic $\theta$-module
of rank $n$ with $D_f(t)=(n+1)(1-((n+1)t)^{n+1})$.
The corresponding Picard-Fuchs operator reads
\[
\theta^n - ((n+1)t)^{n+1} (\theta+1)\ldots(\theta+n),
\]
which is of hypergeometric origin and of MUM-type.
This family is related to the so-called Dwork families $X_0^{n+1}+\cdots+X_n^{n+1}=\frac{1}{t}
X_0\cdots X_n$ which one often finds in the literature. Simply replace $x_i$ in the simplicial
family by $X_i^{n+1}/(X_0\cdots X_n)$ for every $i$. The case $n=4$ is of course the quintic family.
\smallskip

{\bf Hyperoctahedral family.  }
\smallskip

Let
\[
g(\v x)=x_1+\frac{1}{x_1}+x_2+\frac{1}{x_2}+\cdots+x_n+\frac{1}{x_n}.
\] 
The corresponding family of varieties $1-tg(\v x)=0$ is called the hyperoctahedral family,
also after the shape of its Newton polytope. For the group $\sG$ we take the group
generated by the permutations of $x_1,\ldots,x_n$ and $x_i\to x_i^{\pm1}$ for $i=1,\ldots,n$. In Section
\ref{sec:hyperoctahedral} we will show that there
exists a polynomial $D_f(t)$ such that over the ring $\Z[t,1/D_f(t)]$ the module $(\sO_f^\circ)^{\sG}/d\sO_f$ is
generated by $\theta^i(1/f)$ for $i=0,\ldots,n-1$. There we also construct a Picard-Fuchs operator
$L$, which turns out to be of MUM-type. If $L$ were irreducible in $\Q(t)[\theta]$ then 
$(\sO_f^\circ)^{\sG}/d\sO_f$ would be a rank~$n$ cyclic $\theta$-module of MUM-type of rank $n$.
When $n=4$ we recover the diagonal family introduced earlier. 

In Section~\ref{sec:HWC} we will show that the $n$-th Hasse--Witt condition holds for the simplicial and
hyperoctahedral families.
As a consequence of all this we find the following.

\begin{corollary}\label{quintic-diagonal}
In the simplicial family the numbers $A_r/r^2$ are $p$-adically
integral for all $p>n+1$. 

In the hyperoctahedral case we have the same result for all $p>n$ but under the condition that the
Picard-Fuchs operator is irreducible in $\Q(t)[\theta]$.

In the 'quintic example' and the 'diagonal example' above
the instanton numbers are $p$-integral for every prime $p>5$ in the quintic case
and $p\ge5$ in the diagonal case.
\end{corollary}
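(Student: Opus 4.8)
The plan is to deduce Corollary~\ref{quintic-diagonal} by combining Theorems~\ref{yukawa2} and~\ref{yukawa3} with Proposition~\ref{cartier2frobenius}, so the work is essentially verifying the hypotheses of the latter for the two families and checking that $\alpha_1=0$. First I would treat the \textbf{simplicial family}. By the discussion in Section~\ref{setup} and the announced results of Section~\ref{sec:simplicial}, the polytope $\Delta$ of $g(\v x)=x_1+\cdots+x_n+(x_1\cdots x_n)^{-1}$ is reflexive, the group $\sG\cong S_{n+1}$ acts by monomial automorphisms, and $(\sO_f^\circ)^\sG/d\sO_f$ is a cyclic $\theta$-module of MUM-type of rank $n$ with $D_f(t)=(n+1)(1-((n+1)t)^{n+1})$, so $D_f(0)=n+1$. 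For a prime $p>n+1$ we then have $p\nmid \#\sG\times D_f(0)$ (since $\#\sG=(n+1)!$ has only prime factors $\le n+1$) and $p>n$, and by Section~\ref{sec:HWC} the $n$-th Hasse--Witt condition holds. Hence Proposition~\ref{cartier2frobenius} gives a $p$-adic Frobenius structure. To invoke Theorem~\ref{yukawa2} I must show $\alpha_1=0$, i.e. $\lambda_1(0)=0$; here I would cite \cite[Proposition~7.7]{BeVlIII}, which (as the remark before Theorem~\ref{yukawa2} indicates) computes the relevant Frobenius data for these diagonal/Dwork-type families and yields $\alpha_1=0$. Theorem~\ref{yukawa2} then gives $A_r/r^2\in\Z_p$ for all $r\ge1$ and all $p>n+1$.

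For the \textbf{hyperoctahedral family} the argument is parallel, with two differences to flag. The group is now $\sG\cong (\Z/2\Z)^n\rtimes S_n$ of order $2^n n!$, whose prime divisors are $\le n$, and the relevant $D_f(0)$ divides a power of $2$ times a small constant; consequently $p>n$ suffices for $p\nmid \#\sG\times D_f(0)$. The second difference is that, as stated in Section~\ref{sec:hyperoctahedral}, one only knows $(\sO_f^\circ)^\sG/d\sO_f$ is \emph{generated} by $\theta^i(1/f)$, $i=0,\ldots,n-1$; to conclude it is a \emph{cyclic $\theta$-module of MUM-type} of rank $n$ one needs the Picard-Fuchs operator $L$ to be irreducible in $\Q(t)[\theta]$ — hence the extra hypothesis in the statement. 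Granting irreducibility, Proposition~\ref{cartier2frobenius} applies for all $p>n$; the $n$-th Hasse--Witt condition again holds by Section~\ref{sec:HWC}, and $\alpha_1=0$ again follows from \cite[Proposition~7.7]{BeVlIII}. Theorem~\ref{yukawa2} then yields $A_r/r^2\in\Z_p$ for $p>n$.

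For the two named \textbf{numerical examples} I would specialize $n=4$ and additionally invoke self-duality to upgrade $A_r/r^2$ to $A_r/r^3$. In the quintic case the operator is $L=\theta^4-(5t)^5(\theta+1)(\theta+2)(\theta+3)(\theta+4)$ (up to the change of variable $t\mapsto t^5$), which is the $n=4$ simplicial family; it is irreducible and self-dual (it is a classical Calabi-Yau operator, \#3 in \cite{AESZ10}), and $D_f(0)=5$, $\#\sG=120$, so for every prime $p>5$ Proposition~\ref{cartier2frobenius} gives a Frobenius structure with $\alpha_1=0$ and Theorem~\ref{yukawa3} gives $A_r/r^3\in\Z_p$. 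In the diagonal case $L$ is the operator \#16 of \cite{AESZ10} (with $z=t^2$), again self-dual and, under the irreducibility already assumed in the hyperoctahedral statement, a rank~$4$ cyclic $\theta$-module of MUM-type; one checks the relevant $D_f$ is a power of $2$ times $1-80t^2+1024t^4$, so $p\ge5$ avoids $\#\sG\times D_f(0)$, and Theorem~\ref{yukawa3} gives $A_r/r^3\in\Z_p$ for all $p\ge5$. The main obstacle in all of this is not the prime-counting bookkeeping but the input $\alpha_1=0$: verifying that the Frobenius constant $\lambda_1(0)$ vanishes for these families is exactly the delicate computation isolated in \cite[Proposition~7.7]{BeVlIII}, and the sensitivity of the whole result to the normalization of $t$ (as the remark after Theorem~\ref{yukawa3} stresses) means this step cannot be skipped or rescaled away.
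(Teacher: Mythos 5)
Your overall skeleton matches the paper's: establish that the two families give cyclic $\theta$-modules of MUM-type (Sections \ref{sec:simplicial}, \ref{sec:hyperoctahedral}, with irreducibility as an extra hypothesis in the hyperoctahedral case), check the $n$-th Hasse--Witt condition via Theorem \ref{HWC}, apply Proposition \ref{cartier2frobenius}, and feed the result into Theorems \ref{yukawa2} and \ref{yukawa3}. The prime bookkeeping is essentially right (for the hyperoctahedral family the paper has $D_f(0)=n!$, so $p>n$ suffices exactly as you conclude, even though your description of $D_f(0)$ is off).

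The genuine gap is in the step $\alpha_1=0$. You cite \cite[Proposition 7.7]{BeVlIII} as if it directly computes the Frobenius constant $\alpha_1=p^{-1}\lambda_1(0)$, where $\lambda_1$ is the coefficient of $(\theta(1/f))^\sigma$ in the expansion of $\cartier(1/f)$ modulo $d\hat\sO_{f^\sigma}$ with respect to the full rank-$n$ basis. But the quantity computed in that proposition is a different coefficient, denoted $\widetilde\lambda_1$ in this paper: it is taken with respect to the rank-two truncation, i.e.\ the expansion of $\cartier(1/f)$ in terms of $1/f^\sigma$ and $(\theta(1/f))^\sigma$ modulo $\fil_2^\sigma$. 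A priori $\lambda_1(0)$ and $\widetilde\lambda_1(0)$ need not coincide, and the paper devotes Section \ref{sec:vanishing} precisely to this bridge: Lemma \ref{mod-fil-2-lemma} shows (using the Hasse--Witt condition and transitivity of $\sG$ on the vertices of $\Delta$) that $(\hat\sO_f^\circ)^\sG/\fil_2$ is free on $1/f,\theta(1/f)$ and that $d\hat\sO_f\cap(\hat\sO_f^\circ)^\sG\subset\fil_2$, and Lemma \ref{vanishing-lemma} then expresses each $(\theta^i(1/f))^\sigma$, $i\ge2$, modulo $\fil_2^\sigma$ with coefficients $A_i(t^\sigma),B_i(t^\sigma)$ vanishing at $t=0$, giving $\lambda_1(0)=\widetilde\lambda_1(0)$. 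Only after this identification does \cite[Proposition 7.7]{BeVlIII} (whose hypotheses — transitive $\sG$-action on vertices, $t^\sigma/t^p\in1+pt\Z_p\lb t\rb$, vertex coefficients equal to $1$ — also need to be checked, which you do not do) yield $\widetilde\lambda_1(0)=0$ and hence $\alpha_1=0$. Without this argument your citation does not establish the key input to Theorems \ref{yukawa2} and \ref{yukawa3}.
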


\begin{remark}
We conjecture that in the hyperoctahedral case the Picard-Fuchs operator is irreducible for all $n\ge2$.
We are indebted to Jaques-Arthur Weil who verified the
irreducibility when $2\le n\le10$ with the aid of the DEtools package in Maple.
\end{remark}

\begin{proof}[Proof of Corollary \ref{quintic-diagonal}]
In Sections \ref{sec:simplicial} and \ref{sec:hyperoctahedral} we find that both examples produce
cyclic $\theta$-modules of
MUM-type and rank $n$. A quick check shows that the Newton polytopes satisfy the conditions of 
Theorem~\ref{HWC}, which then tells us that the $n$-th Hasse--Witt condition holds.
Hence, by Propostion \ref{cartier2frobenius} both examples have Picard-Fuchs equations with a
Frobenius structure. In Section~\ref{sec:vanishing} we show that $\alpha_1=0$.
Namely, by Lemma~\ref{vanishing-lemma} we have $\alpha_1=p^{-1}
\lambda_1(0)=p^{-1}\widetilde\lambda_1(0)$, where $\widetilde\lambda_1(0)$ is the quantity computed in
\cite[Proposition 7.7]{BeVlIII}. If $\sG$ acts transitively on the vertices of $\Delta$,
$t^\sigma/t^p \in 1 + p t\Z_p\lb t \rb$ and the vertex coefficient of $g(\v x)$ is $1$,
this proposition gives $\widetilde\lambda_1(0)=0$.
The Corollary now follows from Theorems \ref{yukawa2} and \ref{yukawa3}.
\end{proof}

By the time the present paper was finished, we realized that the numbers $A_r/r^2$ in
the case $n=5$ of the simplicial family are the so-called $g=0$ BPS-numbers 
corresponding to this family of Calabi-Yau fourfolds. See \cite[\S6.1]{KP08}.
This partly answers a question posed to us by Martijn Kool about BPS-numbers for Calabi-Yau
fourfolds in one instance.
\medskip

{\bf Acknowledgement}: We like to thank Martijn Kool and Duco van Straten for their
explanations about BPS-numbers. We also thank Jacques-Arthur Weil for the verification
of the irreducibility of the Picard-Fuchs operator in the hyperoctahedral case for small $n$. 

\section{Proof of Remark \ref{frobenius-in-standard-basis} and
Theorems \ref{mirror-integrality}, \ref{yukawa2}, \ref{yukawa3}}
\label{sec:thmproofs}
We start with a classical lemma.

\begin{lemma}[Dieudonn\'e-Dwork lemma]\label{DieudonneDwork}
Let $g\in t\Q_p\lb t\rb$. Then $g(t)-\frac{1}{p}g(t^\sigma)\in\Z_p\lb t\rb$ if and only if
$\exp(g)\in\Z_p\lb t\rb$.
\end{lemma}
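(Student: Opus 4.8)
The plan is to prove the equivalence by manipulating the logarithmic derivative, or rather the exponential series directly. Set $F = \exp(g) \in \Q_p\pow{t}$; since $g \in t\Q_p\pow{t}$ we have $F \in 1 + t\Q_p\pow{t}$, so $F$ is invertible and $F^\sigma := \exp(g(t^\sigma))$ is well-defined and lies in $1 + t\Q_p\pow{t}$ as well (using $t^\sigma \in t^p(1+pt\Z_p\pow{t})$, so $g(t^\sigma) \in t\Q_p\pow{t}$). The key algebraic identity is
\[
F(t)^p \big/ F(t^\sigma) \;=\; \exp\!\big(p\,g(t) - g(t^\sigma)\big) \;=\; \exp\!\big(p(g(t) - \tfrac1p g(t^\sigma))\big).
\]
So the statement to prove becomes: for $h := g - \tfrac1p g(t^\sigma) \in t\Q_p\pow{t}$, one has $h \in \Z_p\pow{t}$ if and only if $\exp(ph) = F^p/F^\sigma \in \Z_p\pow{t}$ with the added information that $F \in 1+t\Z_p\pow{t}$ as well. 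Hence the real content is the two implications below.

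First I would prove the ``only if'' direction: if $g - \tfrac1p g(t^\sigma) \in \Z_p\pow{t}$ then $\exp(g) \in \Z_p\pow{t}$. Here one uses the classical fact (Dwork's lemma in its usual form) that a power series $F \in 1 + t\Q_p\pow{t}$ satisfies $F \in 1 + t\Z_p\pow{t}$ if and only if $F(t)^p/F(t^\sigma) \in 1 + pt\Z_p\pow{t}$. Granting this, from $h = g - \tfrac1p g(t^\sigma) \in t\Z_p\pow{t}$ we get $ph \in pt\Z_p\pow{t}$, hence $\exp(ph) = \sum_{k\ge 0} (ph)^k/k! \in 1 + pt\Z_p\pow{t}$ because $v_p(p^k/k!) = k - v_p(k!) \ge k - \tfrac{k-1}{p-1} \ge 1$ for $k \ge 1$; and $\exp(ph) = F^p/F^\sigma$, so the criterion applies and $F \in 1+t\Z_p\pow{t}$. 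For the ``if'' direction: assume $F = \exp(g) \in \Z_p\pow{t}$; then $F \in 1 + t\Z_p\pow{t}$, so $F^p/F^\sigma \in 1 + pt\Z_p\pow{t}$ by the easy direction of the classical criterion (the map $t \mapsto t^\sigma$ preserves $1+t\Z_p\pow{t}$ and raising to the $p$-th power lands in $1+pt\Z_p\pow{t}$ modulo the $\sigma$-twist — one checks $F(t)^p \equiv F(t^p) \equiv F(t^\sigma) \pmod{p}$). Thus $\exp(ph) \in 1 + pt\Z_p\pow{t}$, and applying $\tfrac1p\log$ — which converges on $1+pt\Z_p\pow{t}$ and returns a series in $t\Z_p\pow{t}$, since $v_p(p^{k-1}/k) \ge 0$ — recovers $h = g - \tfrac1p g(t^\sigma) \in t\Z_p\pow{t}$.

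The main obstacle, and the only point requiring care, is the classical ``Dwork criterion'' lemma that $F \in 1+t\Q_p\pow{t}$ lies in $1+t\Z_p\pow{t}$ iff $F^p/F^\sigma \in 1+pt\Z_p\pow{t}$, in particular its nontrivial direction. One proves this by induction on the $t$-adic valuation of the hypothetical first non-integral coefficient: write $F = F_0 \cdot (1 + c t^m + O(t^{m+1}))$ with $F_0 \in 1+t\Z_p\pow{t}$ agreeing with $F$ up to order $m-1$, and $c \notin \Z_p$; since $F_0^p/F_0^\sigma \in 1+pt\Z_p\pow{t}$ already, comparing the order-$m$ coefficient of $F^p/F^\sigma$ forces $pc \in \Z_p + (\text{lower-order stuff})$, hence $pc \in \Z_p$, i.e. $v_p(c) \ge -1$; bootstrapping the same argument (or arguing that the first truly non-integral coefficient would produce a term of valuation exactly $v_p(c) < 0$ surviving in $F^p/F^\sigma$, contradicting the hypothesis) yields $c \in \Z_p$. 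Once this classical ingredient is in hand — and since it is standard I would either cite it or dispatch it in one short paragraph — the rest is the routine exponential/logarithm bookkeeping with the valuation bounds $v_p(p^k/k!) \ge 1$ and $v_p(p^{k-1}/k) \ge 0$ recorded above.
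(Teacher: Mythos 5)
Your argument is correct. Note that the paper itself does not prove this lemma but cites \cite[Lemma 7.10]{BeVlIII}, so there is no in-text proof to compare against; what you give is the standard self-contained route, reducing the statement to Dwork's classical integrality criterion ($F\in 1+t\Z_p\pow{t}$ iff $F(t)^p/F(t^\sigma)\in 1+pt\Z_p\pow{t}$) via the identity $F(t)^p/F(t^\sigma)=\exp\bigl(p\,(g-\tfrac1p g(t^\sigma))\bigr)$ with $F=\exp(g)$, together with the valuation bounds $v_p(p^k/k!)\ge 1$ and $v_p(p^{k-1}/k)\ge 0$. All the steps check out, including the fact that your inductive proof of the criterion is insensitive to the general Frobenius lift $t^\sigma=t^p(1+ptu(t))$, since $(F/G)^\sigma-1$ has $t$-adic order at least $pm>m$ and $G^p/G^\sigma\equiv 1 \pmod p$ only needs $t^\sigma\equiv t^p \pmod p$. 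One small simplification: in the induction step the coefficient of $t^m$ in $(F/G)^p/(F/G)^\sigma$ is exactly $pc$, and the hypothesis places it in $p\Z_p$, so you get $c\in\Z_p$ in one stroke; the intermediate conclusion ``$pc\in\Z_p$, hence $v_p(c)\ge -1$, then bootstrap'' is unnecessary, though your parenthetical alternative (a first non-integral coefficient would leave a term of valuation $1+v_p(c)<1$ in $F^p/F^\sigma$) also closes the argument correctly.
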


For a proof see \cite[Lemma 7.10]{BeVlIII}. 
The following criterion allows us to decide $p$-integrality of the numbers $A_r/r^2$ or $A_r/r^3$.

\begin{lemma}\label{instanton-division}
Let $G(q)\in1+q\Q_p\lb q\rb$. Consider its Lambert series expansion
\[
G(q)=1+\sum_{n\ge1}\frac{h_nq^n}{1-q^n}.
\]
Suppose there exists $s \ge 1$ and $\phi\in\Z_p\lb q\rb$
such that $G(q^p)-G(q)=\theta_{q}^s\phi.$
Then $h_n/n^s \in \Z_p$ for all $n \ge 1$. 

\end{lemma}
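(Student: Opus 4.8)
The plan is to translate everything into the Taylor coefficients of $G$ together with a divisor-sum bookkeeping. First I would expand the Lambert series: writing $G(q) = 1 + \sum_{N \ge 1} c_N q^N$, the identity $\frac{q^n}{1-q^n} = \sum_{m\ge1}q^{mn}$ gives $c_N = \sum_{d \mid N} h_d$, and Möbius inversion recovers $h_n = \sum_{d \mid n}\mu(n/d)\,c_d$. Next, write $\phi = \sum_{M\ge 0} b_M q^M$; the hypothesis $\phi\in\Z_p\lb q\rb$ means $b_M\in\Z_p$ for all $M$, and the constant term is killed by $\theta_q$, so may be discarded. Since $\theta_q^s$ multiplies the coefficient of $q^M$ by $M^s$, while the coefficient of $q^M$ in $G(q^p)-G(q)$ equals $-c_M$ when $p\nmid M$ and $c_N-c_{pN}$ when $M=pN$, comparing coefficients in $G(q^p)-G(q)=\theta_q^s\phi$ yields the two relations
\[
c_M = -M^s b_M \quad (p\nmid M), \qquad c_N - c_{pN} = (pN)^s b_{pN} \quad (N\ge1).
\]
The first shows $c_M\in\Z_p$ whenever $p\nmid M$; the second shows $\ord_p(c_N - c_{pN}) \ge s(\ord_p(N)+1)$.

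Now fix $n\ge1$ and write $n = p^a m$ with $p\nmid m$; the goal is $\ord_p(h_n)\ge sa$. If $a=0$, every divisor $d$ of $n$ is prime to $p$, so $c_d\in\Z_p$ and hence $h_n=\sum_{d\mid n}\mu(n/d)c_d\in\Z_p$, which is the claim since $\ord_p(n^s)=0$. If $a\ge1$, I would run the Möbius sum and retain only the divisors $d$ with $\mu(n/d)\ne0$: writing $d=p^b e$ with $e\mid m$, the factor $\mu(p^{a-b}(m/e))$ vanishes unless $b\in\{a-1,a\}$ and $m/e$ is squarefree, and using $\mu(p\cdot m/e)=-\mu(m/e)$ one obtains the telescoped expression
\[
h_n = \sum_{\substack{e\mid m\\ m/e\ \text{squarefree}}}\mu(m/e)\bigl(c_{p^a e} - c_{p^{a-1}e}\bigr).
\]
Applying the second relation with $N=p^{a-1}e$ (so $\ord_p(N)=a-1$) gives $\ord_p\bigl(c_{p^a e}-c_{p^{a-1}e}\bigr)\ge sa$ for every such $e$, whence $\ord_p(h_n)\ge sa=\ord_p(n^s)$, i.e. $h_n/n^s\in\Z_p$.

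The argument is essentially combinatorial once the coefficient comparison is set up. The one place that needs a little care is the Möbius computation in the case $a\ge1$ — correctly determining which divisors survive and tracking the sign contributed by the power of $p$ — together with the observation that here one only uses the \emph{differences} $c_{p^a e}-c_{p^{a-1}e}$, which are controlled directly by the second relation, so that no separate argument showing $c_M\in\Z_p$ for $p\mid M$ is required. I expect this bookkeeping to be the main (and essentially only) obstacle.
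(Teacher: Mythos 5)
Your proposal is correct and follows essentially the same route as the paper: comparing coefficients in $G(q^p)-G(q)=\theta_q^s\phi$ to get the congruences $c_{pN}\equiv c_N \pmod{(pN)^s\Z_p}$, then Möbius inversion with the terms paired so that only the telescoped differences $c_{p^a e}-c_{p^{a-1}e}$ appear. The only difference is cosmetic (you index the Möbius sum by $\mu(n/d)$ and split by the power of $p$ in $d$, while the paper writes $h_n=\sum_{d\mid n}\mu(d)g_{n/d}$ and pairs $d$ with $pd$), so there is nothing to fix.
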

\begin{proof}
Write $G(q)=1+\sum_{n\ge1}g_nq^n$. Then $G(q^p)-G(q)=\theta_q^s\phi$ implies
that $g_n\is g_{n/p}\mod{n^s\Z_p}$ for all $n\ge1$ (use the convention $g_{n/p}=0$
if $p$ does not divide $n$). One has $g_n=\sum_{d|n}h_d$ and its M\"obius inversion reads 
$h_n=\sum_{d|n}\mu(d)g_{n/d}$,
where $\mu(n)$ is the M\"obius function. Write
\begin{eqnarray*}
h_{n}&=&\sum_{d|n}\mu(d)g_{n/d}\\
&=&\sum_{d|(n/p),d\not\equiv0(p)}\mu(dp)g_{n/pd}+\mu(d)g_{n/d}\\
&=&\sum_{d|(n/p),d\not\equiv0(p)}\mu(d)(g_{n/d}-g_{n/pd})
\end{eqnarray*}
The latter sum is in $n^s \Z_p$ because of the congruences for $g_n$.
\end{proof}

The following lemma gives a criterion to recognize elements of $\Z_p\lb t\rb$.

\begin{lemma}\label{dwork-expansion}
Let $u(t)\in 1+t\Q_p\lb t\rb$ and suppose there exists a linear operator
$\mathcal{A}\in\Z_p\lb t\rb[\theta]$ such that $\mathcal{A}(u^\sigma)=u$.
Then $u\in1+t\Z_p\lb t\rb$.
\end{lemma}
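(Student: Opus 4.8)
The plan is to argue by contradiction, isolating the first $p$-non-integral Taylor coefficient of $u$. Write $u(t)=1+\sum_{m\ge1}c_mt^m$ with $c_m\in\Q_p$, and suppose $u\notin 1+t\Z_p\lb t\rb$; let $d\ge1$ be the least index with $c_d\notin\Z_p$, so that $c_0=1,c_1,\dots,c_{d-1}\in\Z_p$ while $v_p(c_d)<0$. Here $u^\sigma$ denotes $u(t^\sigma)$, obtained by the substitution $t\mapsto t^\sigma$. The key structural point is that this substitution multiplies $t$-adic orders by a factor of at least $p$: since $t^\sigma\in t^p\Z_p\lb t\rb$ we have $(t^\sigma)^m\in t^{pm}\Z_p\lb t\rb$, hence $u^\sigma=1+\sum_{m\ge1}c_m(t^\sigma)^m$, and for every $j<pd$ the coefficient $[t^j]\,u^\sigma$ is a finite $\Z_p$-linear combination of those $c_m$ with $pm\le j$, i.e. with $m\le j/p<d$. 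All of these $c_m$ lie in $\Z_p$ by the minimality of $d$, so $[t^j]\,u^\sigma\in\Z_p$ for every $j<pd$.

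Next I would push this through the operator $\mathcal{A}=\sum_iA_i(t)\theta^i$ with $A_i\in\Z_p\lb t\rb$. The operator $\theta=t\frac{d}{dt}$ multiplies the coefficient of $t^j$ by the integer $j^i$, and multiplication by an element of $\Z_p\lb t\rb$ expresses the coefficient of $t^j$ in a product in terms of the coefficients of $t^l$ with $l\le j$ only; consequently $[t^j]\,\mathcal{A}(u^\sigma)$ is a $\Z_p$-linear combination of the $[t^l]\,u^\sigma$ with $l\le j$. Taking $j<pd$ and invoking the previous step, we obtain $[t^j]\,\mathcal{A}(u^\sigma)\in\Z_p$ for all $j<pd$. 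But $\mathcal{A}(u^\sigma)=u$, so $c_j\in\Z_p$ for all $j<pd$; since $p\ge2$ we have $d<pd$, and in particular $c_d\in\Z_p$, contradicting the choice of $d$. Hence no such $d$ exists and $u\in1+t\Z_p\lb t\rb$.

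I do not expect a genuine obstacle: the whole argument rests on the single fact that $\sigma$ pushes the potentially non-integral tail $\sum_{m\ge d}c_mt^m$ of $u$ out to degrees $\ge pd$ in $u^\sigma$, so it cannot fold back into degrees $<pd$ after one applies an operator with coefficients in $\Z_p\lb t\rb$. This is essentially the one-sided, integrality-only analogue of the Dieudonn\'e--Dwork Lemma~\ref{DieudonneDwork}, with the factor $1/p$ there replaced here by the hypothesis $\mathcal{A}\in\Z_p\lb t\rb[\theta]$; the only bookkeeping needed is that $\theta$ and multiplication by elements of $\Z_p\lb t\rb$ preserve $\Z_p\lb t\rb$, which is immediate.
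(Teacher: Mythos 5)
Your proof is correct, and it rests on exactly the same two facts as the paper's argument --- that $t^\sigma\in t^p\Z_p\lb t\rb$ multiplies $t$-adic orders by at least $p$, and that an operator in $\Z_p\lb t\rb[\theta]$ acts triangularly with $\Z_p$-coefficients on Taylor coefficients --- but it organizes them differently. The paper runs a fixed-point iteration: it first deduces $A_0(0)=1$ by setting $t=0$ in $\mathcal{A}(u^\sigma)=u$, then defines $u_0=1$, $u_{i+1}=\mathcal{A}(u_i^\sigma)$, notes each $u_i\in 1+t\Z_p\lb t\rb$, and proves $u_i\equiv u \pmod{t^{p^i}}$ by induction, so that $u$ is the $t$-adic limit of integral series. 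You instead argue on the first non-integral coefficient $c_d$: since only $c_m$ with $m\le j/p<d$ feed into $[t^j]u^\sigma$ for $j<pd$, the identity $u=\mathcal{A}(u^\sigma)$ forces $c_j\in\Z_p$ for all $j<pd$, contradicting $d<pd$. Your route is slightly more economical (no auxiliary sequence, and no need for $A_0(0)=1$, since the hypothesis already gives constant term $1$), while the paper's version buys explicit integral approximants $u_i$ accurate modulo $t^{p^i}$, exhibiting $u$ as the fixed point of a $t$-adically contracting map --- the same mechanism it reuses elsewhere. One cosmetic slip: you say $\theta$ multiplies the coefficient of $t^j$ by $j^i$; of course $\theta$ multiplies it by $j$ and $\theta^i$ by $j^i$, which is clearly what you meant and does not affect the argument.
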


\begin{proof}
Write $\mathcal{A}=\sum_{i \ge 0} A_i(t) \theta^i$.
Then, by setting $t=0$ in $\mathcal{A}(u^\sigma)=u$ we see that $A_0(0)=1$.
Consider the recursion $u_{i+1}=\mathcal{A}(u_i^\sigma),i\ge0$ and $u_0=1$.
Since $A_0(0)=1$ we get that $u_i\in1+t\Z_p\lb t\rb$ for all $i\ge0$.

By induction on $i$ we now prove that $u_i\is u\mod{t^{p^i}}$ for all $i\ge0$. For
$i=0$ this is trivial. For $i\ge0$ we see that $u_i\is u\mod{t^{p^i}}$ implies
$u_i^\sigma\is u^\sigma\mod{t^{p^{i+1}}}$. Application of $\mathcal{A}$ on both
sides gives $u_{i+1}\is u\mod{t^{p^{i+1}}}$. We conclude that $u\in1+t\Z_p\lb t\rb$.
\end{proof}

\begin{proof}[Proof of Remark \ref{frobenius-in-standard-basis}]
Choose $i$. Then $\mathcal{A}(y_i^\sigma)$ is a solution
of $L(y)=0$. So there exist constants $b_0,\ldots,b_{n-1}$ such that
\be\label{frobeniustransform}
\sA(y_i^\sigma)=b_0y_0+b_1y_1+\cdots+b_{n-1}y_{n-1}.
\ee
This is an equality in the ring $\Q_p\lb t\rb[\log t]$. When we say that we take
the constant term of an element of $\Q_p\lb t\rb[\log t]$ we simply mean that we evaluate
the $\Q_p\lb t\rb$-coefficients at $t=0$
but keep de powers of $\log t$. This gives us a ring homomorphism from $\Q_p\lb t\rb[\log t]$
to $\Q_p[\log t]$. 
In particular the constant term of $y_i$ is $\frac{1}{i!}\log^it$ and the
constant term of $y_i^\sigma$ is $\frac{p^i}{i!}\log^it$.
It is not hard to see that the operation of taking the constant terms commutes with $\theta$. 
Hence if we take constant terms on both sides of \eqref{frobeniustransform}
we get
\[
\sum_{j=0}^{n-1}A_j(0)\theta^j\left(p^i\frac{\log^it}{i!}\right)
=\sum_{l=0}^{n-1}b_l\frac{\log^lt}{l!}.
\]
Elaboration of the left hand side gives us
\[
p^i\sum_{j=0}^iA_j(0)\frac{\log^{i-j}t}{(i-j)!}=\sum_{l=0}^{r-1}b_l\frac{\log^lt}{l!}.
\]
We conclude that $b_l=p^iA_{i-l}(0)$ if $l\le i$ and $b_l=0$ if $l>i$. We put
$\alpha_i=A_i(0)$.
\end{proof}

\begin{proof}[Proof of Theorem \ref{mirror-integrality}]
Due to Remark \ref{frobenius-in-standard-basis} there exists $\mathcal{A}\in\Z_p\lb t\rb[\theta]$ such that
$\mathcal{A}(y_0^\sigma)=y_0$.
By Lemma \ref{dwork-expansion} we have $y_0\in 1+t\Z_p\lb t\rb$.
We also have that $\mathcal{A}(y_1^\sigma)=p(y_1+\alpha_1y_0)$.
Let us define $v=y_1/y_0$ and rewrite $\mathcal{A}(y_1^\sigma)$ as
\[
\mathcal{A}(v^\sigma y_0^\sigma)=v^\sigma\mathcal{A}(y_0^\sigma)+
\mathcal{A}_1(\theta(v^\sigma))
\]
for some operator $\mathcal{A}_1\in\Z_p\lb t\rb[\theta]$. Since $\mathcal{A}(y_0^\sigma)=y_0$
we get
\be\label{frobenius-v}
v^\sigma y_0+\mathcal{A}_1(\theta(v^\sigma))=p(v+\alpha_1) y_0.
\ee
Divide on both sides by $y_0$ and apply $\theta$. We get, after division by $p$,
\[
\mathcal{A}_2(\frac{1}{p}\theta(v^\sigma))=\theta v
\]
for some operator $\mathcal{A}_2\in \Z_p\lb t\rb[\theta]$. Write $\frac{1}{p}\theta(v^\sigma)=
\frac{1}{p}\frac{\theta(t^\sigma)}{t^\sigma}(\theta v)^\sigma$
and note that
$z(t):=\frac{1}{p}\frac{\theta(t^\sigma)}{t^\sigma}\in\Z_p\lb t\rb$.
Hence $\mathcal{A}_2(z(t)(\theta v)^\sigma)=\theta v$. 
Moreover, $(\theta v)(0)=1$. Hence $\theta v\in1+t\Z_p\lb t\rb$ by Lemma \ref{dwork-expansion}.
This also implies $\theta(v^\sigma)=pz(t)(\theta v)^\sigma \in p\Z_p\lb t\rb$.
Using this in \eqref{frobenius-v} and division by $py_0$ yields
\[
\frac{1}{p}v^\sigma-v\in \Z_p\lb t\rb,
\]
hence
\[
\frac{1}{p}\frac{F_1^\sigma}{F_0^\sigma}-\frac{F_1}{F_0}\in\Z_p\lb t\rb.
\]
The Dwork-Dieudonn\'e lemma then implies that $\exp(F_1/F_0)\in\Z_p\lb t\rb$.
\end{proof}

\begin{proof}[Proof of Theorem \ref{yukawa2}]
By Theorem \ref{mirror-integrality} we have $q=\exp(y_1/y_0)\in t+t^2\Z_p\lb t\rb$.
We then have $\Z_p\lb q\rb=\Z_p\lb t\rb$. Consider the special Frobenius lift given
by $q^\sigma=q^p$.
Since $L$ has a Frobenius structure, the same holds for the operator $L\circ y_0$ which
has $1=y_0/y_0,y_1/y_0,\ldots$ as solutions. Simply replace the operator $\mathcal{A}$ in
Definition \ref{frobeniusstructure}
by $\frac{1}{y_0}\circ \mathcal{A}\circ y_0^\sigma$. This change does not affect the
value of $\alpha_1$, which is still $0$. We write these functions in terms of $q$.
We have $y_0/y_0=1,y_1/y_0=\log q,y_2/y_0=\frac12\log^2q+V_2(q)$.
Frobenius structure with $\alpha_1=0$ for this operator implies that there is a
$\mathcal{A}\in\Z_p\lb q\rb[\theta_q]$ such that
\be\label{fr-structure-rank-3}
\bal
&\mathcal{A}\left(1\right)=1, \quad \mathcal{A}\left(\log q\right)= \log q,\\
&\mathcal{A}\left(\frac12\log^2(q^p)+V_2(q^p)\right)=
p^2\left(\frac12\log^2q+V_2(q)+\alpha_2\right).
\eal\ee
It follows from the first two identities that $\mathcal{A}=1+\mathcal{A}_2\theta_q^2$ with some
operator $\mathcal{A}_2 \in \Z_p\lb q\rb[\theta_q]$. The second identity above then turns into
\be\label{frobenius-v2}
V_2(q^p)+\mathcal{A}_2(p^2+p^2(\theta_q^2V_2)(q^p))=p^2(V_2(q)+\alpha_2).
\ee
Apply $\theta_q^2$, divide by $p^2$ and add $1$ on both sides. We obtain
\[
\left( 1+\theta_q^2\mathcal{A}_2\right)(1+(\theta_q^2V_2)(q^p))=1+(\theta_{q}^2V_2)(q).
\]
Using Lemma \ref{dwork-expansion} we find that $K(q)=1+(\theta_q^2V_2)(q)\in \Z_p\lb q\rb$.
Using this in \eqref{frobenius-v2} we obtain
\[
\frac{1}{p^2}V_2(q^p)-V_2(q)\in \Z_p\lb q\rb.
\]
Denote this function by $\phi(q)$. Apply $\theta_q^2$ to get 
$K(q^p)-K(q)=\theta_q^2\phi(q)$.
Lemma \ref{instanton-division} with $G(q)=K(q)$ and $s=2$ then implies our theorem.
\end{proof}

\begin{proof}[Proof of Theorem \ref{yukawa3}]
We again work over the ring $\Z_p\lb q\rb$ and Frobenius lift given by $q^\sigma=q^p$.
Again the operator $L\circ y_0$ has a Frobenius structure with $\alpha_1=0$. As functions of $q$
its first four solutions are given by
\[
y_0/y_0=1,\ y_1/y_0=\log q,\ y_2/y_0=\frac12\log^2q+V_2(q),
\ y_3/y_0=\frac16\log^3q+V_2(q)\log q+V_3(q).
\] 
The self-duality relation
\[
\left|\begin{matrix}y_0 & y_3\\ \theta_q y_0 &\theta_q y_3\end{matrix}\right|=
\left|\begin{matrix}y_1 & y_2\\ \theta_q y_1 &\theta_q y_2\end{matrix}\right|
\]
evaluates to $2V_2+\theta_qV_3=0$. 
Existence of the Frobenius structure with $\alpha_1=0$ the operator $L\circ y_0$ implies
that there exists
$\mathcal{A}\in\Z_p\lb q\rb[\theta_q]$ such that the identities~\eqref{fr-structure-rank-3}
hold along with the additional identity
\[
\mathcal{A}\left(\frac16\log^3(q^p)+V_2(q^p)\log(q^p)+V_3(q^p)\right)=
p^3\left(\frac16\log^3q+V_2(q)\log q+V_3(q)+\alpha_2\log q+\alpha_3\right).
\]
Here $\alpha_2,\alpha_3\in\Z_p$. Following the arguments from the proof of Theorem~\ref{yukawa2} we can
write $\mathcal{A}=1+\mathcal{A}_2 \theta_q^2$ with some differential operator $\mathcal{A}_2$
satisfying~\eqref{frobenius-v2}. Our additional identity now becomes
\[
pV_2(q^p)\log q+V_3(q^p)+\mathcal{A}_2
\left[p^3\log q+2 p^2(\theta V_2)(q^p)+p^3(\theta^2V_2)(q^p)\log q\right.
\]
\[
\left.+p^2(\theta^2V_3)(q^p)\right]=
p^3\left(V_2(q)\log q+V_3(q)+\alpha_2\log q+\alpha_3\right).
\]
Use the relation $2\theta_qV_2+\theta_q^2V_3=0$, which follows from the
duality relation $2V_2+\theta_qV_3=0$. We get after division by $p^3$,
\[
\frac{1}{p^3}V_3(q^p)+\frac{1}{p^2}V_2(q^p)\log q+
\mathcal{A}_2(K(q^p)\log q)=(V_2(q)+\alpha_2)\log q+V_3(q)+\alpha_3.
\]
We know from Theorem \ref{yukawa2} that $K(q)\in\Z_p\lb q\rb$. Terms with $\log q$ cancel due to the
identity~\eqref{frobenius-v2} and the remaining terms yield
\[
\frac{1}{p^3}V_3(q^p)-V_3(q)\in\Z_p\lb q\rb.
\]
We denote this function by $\psi(q)$ and 
apply $\theta_q^3$ to get $(\theta_q^3V_3)(q^p)-(\theta_qV_3)(q)=\theta_q^3\psi$.
Hence $K(q^p)-K(q)=-\frac12\theta_q^3\psi$. 
Lemma \ref{instanton-division} with $G(q)=K(q)$ and $s=3$ then implies our theorem.
\end{proof}

\section{Checking higher Hasse--Witt conditions}\label{sec:HWC} 

We recall our setup which was introduced starting on page~\pageref{setup}.
We have a Laurent polynomial $f(\v x)=1-t g(\v x)$
with $g(\v x)\in \Z[ x_1^{\pm 1},\ldots,x_n^{\pm 1}]$ and such that its Newton polytope 
$\Delta \subset \R^n$ is \emph{reflexive}. We pick a prime $p$ and work over the ring 
$R=\Z_p\lb t \rb$ with a fixed Frobenius lift $t^\sigma \in t^p(1+p t \Z_p\lb t \rb)$.
The $R$-module $\sO_f$ consists of rational functions of the form $(k-1)!A(\v x)/f(\v x)^k$
with $k \ge 1$ and \emph{admissible} polynomials $A(\v x)$ supported in $k\Delta$. 
We have the $R$-linear 
Cartier operator on the $p$-adic completions $\cartier:\hat\sO_f\to \hat\sO_{f^\sigma}$,
where $f^\sigma(\v x) = 1 -t^\sigma g(\v x)$.
In \cite[\S3]{BeVlIII} we introduced for $k\ge1$ the so-called $k$-th formal derivates in $\hat\sO_f$ by
\[
\fil_k:=\{\omega\in\hat\sO_f|\cartier^s(\omega)\in p^{ks}\hat\sO_{f^{\sigma^s}}
\ \text{for all $s\ge1$}\}.
\] 
The main result in \cite{BeVlIII} is Theorem 4.2 together with its Corollary 5.9, which state
that if $k<p$ and the Hasse--Witt determinants $hw^{(\ell)}$ are in $R^\times$ for $\ell=1,\ldots,k$, then
\be\label{O-f-decomposition-k}
\hat\sO_f\cong \sO_f(k)\oplus\fil_k,
\ee
where $\sO_f(k)$ is the $R$-module generated by the rational functions $t^{\deg(\v u)}\v x^{\v u}/f(\v x)^k$
with $\v u\in k\Delta$. Similarly we have $\hat\sO_{f^\sigma}\cong \sO_{f^\sigma}(k)\oplus\fil_k^\sigma$,
where 
\[
\fil_k^\sigma:=\{\omega\in\hat\sO_{f^\sigma}|\cartier^s(\omega)\in p^{ks}\hat\sO_{f^{\sigma^{s+1}}}
\ \text{for all $s\ge1$}\}.
\]

For a set $S \subset \R^n$ we denote by $S_\Z$ the set of integral points in $S$, $S_\Z = S \cap \Z^n$.
The Hasse--Witt determinants $hw^{(k)}$ are defined as follows. 

\begin{definition}Let $1 \le k < p$. The $k$th \emph{Hasse--Witt matrix} $HW^{(k)}$ is the square matrix
indexed by $\v u, \v v \in (k \Delta)_\Z$, the $(\v u, \v v)$-entry being the coefficient of 
$\v x^{p \v v - \v u}$ in the polynomial
\be\label{HW-k-polynomial}
F^{(k)}(\v x) = f(\v x)^{p-k} \sum_{r=0}^{k-1} (f^\sigma(\v x^p) - f(\v x)^p)^r f^\sigma(\v x^p)^{k-1-r}
\ee
divided by $t^{p \deg(\v v)-\deg(\v u)}$.
\end{definition}

We note that the entries of $HW^{(k)}$ are in $\Z_p \lb t \rb$. This is because $F^{(k)}(\v x)$ is
admissible and $\deg(p \v v - \v u) \ge p \deg(\v v) - \deg(\v u)$. We introduced these matrices
in~\cite[\S 5]{BeVlIII}. They arise naturally when one wants to decribe the Cartier action on
admissible rational functions modulo $p^k$. Namely, for any $\v u \in (k \Delta)_\Z$ one has
\[
\cartier \frac{t^{\deg(\v u)}\v x^\v u}{f(\v x)^k} = \sum_{\v v \in (k \Delta)_\Z}
HW^{(k)}_{\v u \v v} \; \frac{t^{p \deg(\v v)}\v x^\v v}{f^\sigma(\v x)^k} \mod {p^k \hat\sO_{f^\sigma}},
\]
see \cite[(11)]{BeVlIII}. It follows from~\cite[Proposition 5.7]{BeVlIII} that $\det(HW^{(k)})$
is divisble by $p^{L(k)}$ where
\[
L(k) = \sum_{\ell=1}^{k} \left( \# (k \Delta)_\Z - \# (\ell \Delta)_\Z \right) = 
\sum_{\v u \in (k \Delta)_\Z  \setminus  \{ \v 0 \}} \left( \deg(\v u)-1 \right).
\] 

\begin{definition} We say that the $k$-th Hasse--Witt condition holds when 
\be\label{HW-condition-s}
hw^{(\ell)}:=p^{- L(\ell)} \det (HW^{(\ell})) \in \Z_p\lb t \rb^\times
\ee
for each $1 \le \ell \le k$. 
\end{definition}

The present section is devoted to proving the following theorem. We say that a proper face $\tau$
of $\Delta$ is \emph{a simplex of volume~$1$} if it has
$\dim(\tau)+1$ vertices and all lattice points in the $\R_{\ge0}$-cone generated by $\tau$
are integer linear combinations of the vertex vectors.

\begin{theorem}\label{HWC}
Suppose $p>n$. If all proper faces of $\Delta$ are simplices of volume~1 and 
all coefficients of $g(\v x)$ are in $\Z_p^\times$ then the $k$-th Hasse--Witt condition
holds for any $k<p$.  
\end{theorem}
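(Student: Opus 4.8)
The plan is to prove, for each $\ell$ with $1\le\ell<p$, that $hw^{(\ell)}=p^{-L(\ell)}\det\bigl(HW^{(\ell)}\bigr)$ is a unit in $\Z_p\lb t\rb$; the $k$-th Hasse--Witt condition is exactly the conjunction of these for $\ell\le k$. Since $\det\bigl(HW^{(\ell)}\bigr)\in p^{L(\ell)}\Z_p\lb t\rb$ is already known (\cite[Proposition 5.7]{BeVlIII}, recalled above) and an element of $\Z_p\lb t\rb$ is a unit precisely when its constant term lies in $\Z_p^\times$, it suffices to prove that the integer matrix $M_\ell:=HW^{(\ell)}|_{t=0}$ satisfies $v_p\bigl(\det M_\ell\bigr)=L(\ell)=\sum_{\v u\in(\ell\Delta)_\Z\setminus\{\v 0\}}(\deg(\v u)-1)$; only the inequality ``$\le$'' needs proof.

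First I would make $M_\ell$ explicit: its $(\v u,\v v)$-entry is the coefficient of $t^{p\deg(\v v)-\deg(\v u)}\v x^{p\v v-\v u}$ in the polynomial $F^{(\ell)}(\v x)$ of~\eqref{HW-k-polynomial}. Admissibility of $F^{(\ell)}$ together with the inequality $\deg(p\v v-\v u)\ge p\deg(\v v)-\deg(\v u)$ forces a nonzero entry to have $\deg(p\v v-\v u)=p\deg(\v v)-\deg(\v u)$; as $\deg$ extends to the convex piecewise-linear gauge of the reflexive polytope $\Delta$, equality in this instance of subadditivity means that $\v u$, $\v v$ and $p\v v-\v u$ all lie in the cone over one facet of $\Delta$. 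Using the hypothesis that every facet $F$ is a simplex of volume $1$, this cone is unimodular, its lattice points are the nonnegative integer combinations $\sum m_i\v b_i^F$ of the $n$ facet vertices (which form a $\Z$-basis), $\deg\bigl(\sum m_i\v b_i^F\bigr)=\sum m_i$, and the only lattice points of $\Delta$ on it are $\v 0$ and the $\v b_i^F$; in particular on that cone $g$ is the ``linear'' expression $\sum_i c_{\v b_i^F}\v x^{\v b_i^F}$ (up to its constant term), all $c_{\v b_i^F}$ being units by hypothesis. The column of $M_\ell$ indexed by $\v 0$ has a unique nonzero entry, the $(\v 0,\v 0)$-entry $1$, so one may delete the $\v 0$-row and $\v 0$-column and work with the submatrix $M_\ell'$ indexed by $(\ell\Delta)_\Z\setminus\{\v 0\}$.

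The heart of the matter is the $p$-adic valuations of the entries of $M_\ell'$, computed one facet cone at a time. Write $F^{(\ell)}=f^{p-\ell}\sum_{r=0}^{\ell-1}\bigl(f^\sigma(\v x^p)-f(\v x)^p\bigr)^rf^\sigma(\v x^p)^{\ell-1-r}$ and use that $f^\sigma(\v x^p)\equiv f(\v x)^p\pmod p$ (Frobenius), so the $r$-th summand is divisible by $p^r$, together with $[t^d\v x^{\v a}]f^{p-\ell}=(-1)^d\binom{p-\ell}{d}[\v x^{\v a}]g^d$ and the fact that on a facet cone $[\v x^{\v a}]g^d$ is a multinomial coefficient times a product of the units $c_{\v b}$. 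Extracting the coefficient of $t^{\deg(p\v v-\v u)}\v x^{p\v v-\v u}$ by splitting $p\v v-\v u$ at minimal $t$-order across the $\ell$ factors then determines each $v_p\bigl((M_\ell')_{\v u\v v}\bigr)$ and shows that, after scaling the rows and columns in the block over a fixed facet cone by suitable nonnegative powers of $p$ (with total exponent equal to the ``contribution of that cone'' to $L(\ell)$), the block reduces modulo $p$ to an explicit matrix whose entries are products of binomial and multinomial coefficients with all arguments $<p$ (here $\ell<p$ is used) times powers of the units $c_{\v b}$, hence invertible mod $p$. Equivalently, after a $GL_n(\Z)$ change of variables carrying the cone to the positive orthant, one is computing the Hasse--Witt datum of the simplicial-type polynomial $1-t\sum_i c_ix_i$, which can be handled in closed form.

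The step I expect to be hardest is the passage from these facet-local statements to the global determinant. A lattice point on a lower-dimensional face of $\Delta$ lies in the cones of several facets, so $M_\ell'$ is not literally block-diagonal over facets and the naive sum of per-cone exponents over-counts $L(\ell)$. To fix this I would fix a unimodular triangulation $\mathcal T$ of $\ell\Delta$ with vertex set $(\ell\Delta)_\Z$ refining the decomposition of $\ell\Delta$ into the dilated pyramids $\ell\cdot\mathrm{conv}(\v 0,F)$ over the facets $F$ (such $\mathcal T$ exist because $\Delta$ is unimodularly triangulated by those pyramids and dilations of unimodular simplices admit unimodular triangulations), and reindex $(\ell\Delta)_\Z\setminus\{\v 0\}$ through $\mathcal T$ along a shelling order so that $M_\ell'$ becomes block upper-triangular with the facet-local blocks on the diagonal; summing their scaling exponents then gives $L(\ell)$ with every lattice point counted once, the product of the local (unit) determinants is a unit, and $v_p(\det M_\ell)=L(\ell)$ follows. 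Making this reduction precise — exhibiting the ordering that genuinely triangulates $M_\ell'$, checking that the facet-local valuations really add up to $L(\ell)$, and ruling out cancellation in the leading term of $\det M_\ell$ — is the main obstacle; it is also where $p>n$ enters, keeping the combinatorial coefficients attached to the $n$-vertex facets $p$-adically invertible.
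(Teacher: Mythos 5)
Your overall skeleton --- evaluate at $t=0$, observe that only the inequality $\ord_p\det\bigl(HW^{(\ell)}|_{t=0}\bigr)\le L(\ell)$ needs proof, use the equality case of subadditivity of $\deg$ to confine nonzero entries to cones over faces, do a local computation on unimodular cones and add up --- is indeed the skeleton of the paper's proof. But the two steps you flag as remaining are exactly where the content lies, and the local step is not merely unproved: as stated it is false. You claim that each diagonal block can be made invertible mod $p$ by multiplying its rows and columns by powers of $p$ whose exponents add up to that block's share of $L(\ell)$. Consider the block attached to the multiples $\v b,2\v b,3\v b$ of a single vertex $\v b$ with unit coefficient $h$, for $\ell=3<p$ (for $n=1$, $g=x+1/x$, this literally \emph{is} one of your facet blocks; for general $n$ these three points must sit together in some diagonal block, since they are pairwise coupled in both directions). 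Up to the unit factors $h^{pv-u}$ the entries are $H'_{u,v}=\sum_{j=0}^{v-1}(-1)^j\binom{3}{j}\binom{p(v-j)-u+2}{2}$ modulo $p^3$, $1\le u,v\le3$, and a direct computation gives the valuation pattern $\bigl(\begin{smallmatrix}1&1&\infty\\ 1&1&\infty\\ 0&0&0\end{smallmatrix}\bigr)$, while this block's share of $L(3)$ is $(1-1)+(2-1)+(3-1)=3$. Any exponents $a_u,b_v$ keeping $p^{-(a_u+b_v)}H'_{u,v}$ integral satisfy $\sum_u a_u+\sum_v b_v=(a_1+b_1)+(a_2+b_2)+(a_3+b_3)\le 1+1+0=2<3$, so no admissible diagonal scaling can extract the required power of $p$, let alone leave a unit determinant. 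Extracting it genuinely requires non-diagonal unimodular row and column operations, i.e.\ a Smith-normal-form analysis of the block; this is the technical heart of the paper (Proposition~\ref{det-HW-pyramid-p-adic-order}, which factors the block as $G^{-1}MNG^p$ and finds the elementary divisors $p^{|\v u|-1}$ via the Pochhammer-basis Lemma~\ref{pochhammer-basis}), and nothing in your sketch replaces it.

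The global assembly also does not work as described. Two lattice points lying on the cone over the same shared lower-dimensional face can be coupled in both directions (e.g.\ $\v b$ and $2\v b$ above), so they must lie in the same diagonal block; hence the diagonal blocks cannot be the full dilated facet pyramids, whose shared boundary points would then have to occur in two blocks, and no triangulation-plus-shelling reindexing of $(\ell\Delta)_\Z\setminus\{\v 0\}$ produces a block upper triangular matrix with those full facet blocks on the diagonal. The paper's resolution is to index blocks by the pyramids $P(\tau^\circ)$ over the \emph{relative interiors} of proper faces $\tau$ of \emph{all} dimensions: these partition $(\ell\Delta)_\Z\setminus\{\v 0\}$ without any over-counting, Lemma~\ref{deg-props}(c) shows a nonzero entry $H_{\v u,\v v}$ forces $\v u\in C(\tau)$ for the face $\tau$ with $\v v\in P(\tau^\circ)$, and ordering the blocks by $\dim\tau$ gives the upper triangular block structure (Proposition~\ref{det-HW-decomposition-at-0}). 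With the correct decomposition the diagonal blocks include those attached to vertices, edges, etc., i.e.\ unimodular cones with $s<n$ generators; your local analysis treats only facet cones ($s=n$), whereas the paper's local proposition (after the reduction of Lemma~\ref{pyramid-F-HW-matrix-restriction}) handles all $1\le s\le n$ uniformly, which is also where the hypotheses $\ell<p$ and $p>n$ actually enter.
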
 
 
One easily verifies that the condition of the theorem is fullfilled by the simplicial
and octahedral Newton polytopes $\Delta$. 
The proof will be given at the end of this section. We start with some preparations.
Note that for $\v w \in \R^n$ one has $\deg \v w =\max_\tau \ell_{\tau}(\v w)$, where
the maximum is taken over all codimension one faces $\tau\subset \Delta$ and $\ell_\tau$
is the linear functional which takes value $1$ on $\tau$.
We will use the following properties of the degree function:

\begin{lemma}\label{deg-props} Let $\v w,\v w_1,\v w_2 \in \R^n$. For a face $\tau$ of any dimension
we denote by $C(\tau)$ the positive $\R_{\ge0}$-cone spanned by $\tau$. Then,
\begin{itemize}
\item[(a)] for any codimension~1 face $\tau$ we have $\deg(\v w)=l_\tau(\v w) \Leftrightarrow \v w \in C(\tau)$ 
\item[(b)] $\deg(\v w_1 + \v w_2) \le \deg \v w_1 + \deg \v w_1$
\item[(c)] the condition $\deg(\v w_1 + \v w_2) = \deg \v w_1 + \deg \v w_1$ is equivalent to the fact that  for all proper faces $\tau$ one has: $\v w_1 + \v w_2 \in C(\tau) \Rightarrow \v w_1, \v w_2 \in C(\tau)$. 
\end{itemize}
\end{lemma}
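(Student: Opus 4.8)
The plan is to work directly from the characterization $\deg \v w = \max_\tau \ell_\tau(\v w)$, where $\tau$ ranges over the codimension~1 faces of $\Delta$ and $\ell_\tau$ is the unique linear functional with $\ell_\tau|_\tau \equiv 1$; this is well-defined precisely because $\Delta$ is reflexive (each such face lies on a hyperplane $\sum a_i x_i = 1$ with integral $a_i$), and $\v 0 \in \Delta^\circ$ guarantees $\ell_\tau(\v 0)=0 < 1$ so the functionals are ``pointing outward.'' For part~(a) I would first observe that the cone $C(\tau)$ spanned by a codimension~1 face $\tau$ is exactly the set of $\v w$ on which $\ell_\tau$ dominates all the other $\ell_{\tau'}$: indeed $C(\tau) = \{\v w : \ell_{\tau'}(\v w) \le \ell_\tau(\v w) \text{ for all codim~1 faces } \tau'\}$, because the facets of the complete fan dual to $\Delta$ are exactly the cones $C(\tau)$ and the linear functionals $\ell_{\tau'} - \ell_\tau$ cut them out. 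Granting this, $\v w \in C(\tau)$ is equivalent to $\ell_\tau(\v w) = \max_{\tau'} \ell_{\tau'}(\v w) = \deg(\v w)$, which is (a). (I should double-check the edge case $\v w = \v 0$, which lies in every $C(\tau)$ and has $\deg \v 0 = 0 = \ell_\tau(\v 0)$, consistent with the statement.)

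For part~(b), subadditivity is immediate: pick a codimension~1 face $\tau$ achieving $\deg(\v w_1 + \v w_2) = \ell_\tau(\v w_1 + \v w_2) = \ell_\tau(\v w_1) + \ell_\tau(\v w_2)$, and bound each summand by $\ell_\tau(\v w_i) \le \deg(\v w_i)$ by definition of the degree as a maximum. (There is a harmless typo in the statement: the right-hand side should read $\deg \v w_1 + \deg \v w_2$.)

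For part~(c), the equality $\deg(\v w_1 + \v w_2) = \deg \v w_1 + \deg \v w_2$ holds iff, choosing a codimension~1 face $\tau$ with $\v w_1 + \v w_2 \in C(\tau)$, we have $\ell_\tau(\v w_i) = \deg(\v w_i)$ for $i = 1,2$, i.e. (by part~(a)) $\v w_1, \v w_2 \in C(\tau)$; conversely if both $\v w_i \in C(\tau)$ then $\deg(\v w_1+\v w_2) \ge \ell_\tau(\v w_1 + \v w_2) = \deg\v w_1 + \deg\v w_2$, which combined with (b) forces equality. To pass between ``for the particular facet $\tau$ containing $\v w_1 + \v w_2$'' and the ``for all proper faces $\tau$'' phrasing in the statement, I would note that a general proper face is a face of some codimension~1 face, so $\v w_1 + \v w_2 \in C(\tau)$ for a proper face $\tau$ implies it lies in $C(\tau')$ for some codimension~1 $\tau' \supseteq \tau$, and one reduces the cone-membership claims for $\tau$ to those for $\tau'$ using $C(\tau) = C(\tau') \cap \{\ell_{\tau'} = 1 \text{-hyperplane span of } \tau\}$; conversely the codimension~1 faces are themselves proper faces so the ``for all proper faces'' condition is a priori at least as strong. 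The step I expect to require the most care is the fan-theoretic identity $C(\tau) = \{\v w : \ell_{\tau'}(\v w) \le \ell_\tau(\v w)\ \forall \tau'\}$ underpinning part~(a)—establishing cleanly that the cones over the facets of $\Delta$ form the maximal cones of the normal fan and that $\ell_\tau$ is the support function restricted to $C(\tau)$—since everything else is a short consequence of ``$\deg$ is a max of linear functionals.''
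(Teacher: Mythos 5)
Your overall route coincides with the paper's: treat $\deg$ as the maximum of the facet functionals $\ell_\tau$, prove (a) as the statement that $C(\tau)$ is exactly the locus where $\ell_\tau$ attains this maximum, and then derive (b) and (c) formally; your remark that the right-hand side of (b), (c) should read $\deg \v w_1+\deg \v w_2$ is also correct. But as written there are two places where the proposal stops short of a proof. First, the fan-theoretic identity $C(\tau)=\{\v w:\ell_{\tau'}(\v w)\le\ell_\tau(\v w)\ \text{for all }\tau'\}$ that you ``grant'' is literally equivalent to part (a) (since $\deg=\max_{\tau'}\ell_{\tau'}$), so deferring it to the structure of the normal fan defers the entire content of (a). The paper needs no fan machinery: for the direction $\deg\v w=\ell_\tau(\v w)\Rightarrow \v w\in C(\tau)$ it sets $\lambda=\deg\v w$, notes $\v w/\lambda\in\Delta$ and $\ell_\tau(\v w/\lambda)=1$, hence $\v w/\lambda\in\Delta\cap\{\ell_\tau=1\}=\tau$, so $\v w\in\lambda\tau\subset C(\tau)$; conversely, writing $\v w=\lambda'\v w'$ with $\v w'\in\tau$ it compares $\ell_{\tau'}(\v w')\le 1=\ell_\tau(\v w')$ for every facet $\tau'$. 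You should supply an argument of this kind (or an actual proof of the fan statement) rather than cite it as standard.

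Second, in (c) your reduction from facets to arbitrary proper faces does not close as stated: if $\v w_1+\v w_2\in C(\tau)$ and you pick a single facet $\tau'\supseteq\tau$, the facet case gives $\v w_1,\v w_2\in C(\tau')$, but the identity $C(\tau)=C(\tau')\cap\mathrm{span}(C(\tau))$ helps only if you already know that $\v w_1,\v w_2$ lie in that span, which at this point you do not. The correct reduction, which is also what underlies the paper's terse ``it suffices to prove the condition for codimension-one faces,'' is the identity $C(\tau)=\bigcap_{\tau'\supseteq\tau}C(\tau')$, the intersection over \emph{all} facets containing $\tau$ (this follows from (a) together with the fact that a face of $\Delta$ is the intersection of the facets containing it); applying the facet case to each such $\tau'$ then puts $\v w_1,\v w_2$ in every $C(\tau')$ and hence in $C(\tau)$. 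With these two repairs your argument is complete and is essentially the paper's proof.
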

\begin{proof} (a) Let us assume $\v w \ne 0$ and set $\lambda = \deg \v w$. It follows from the definition of $\deg$ that $\v w / \lambda \in \Delta$. Suppose that $\deg \v w = \ell_\tau(\v w)$. Then $1 = \deg(\v w/\lambda) = \ell\tau(\v w/\lambda)$.  Since $\v w/\lambda \in \Delta$ we see that it must lie in the face $\tau$. 
Hence $\v w \in \lambda \tau \subset C(\tau)$.  Conversely, if $\v w \in C(\tau)$, choose $\lambda'$ and $\v w' \in \tau$ such that $\v w = \lambda' \v w'$. For all codimension one faces $\tau'$ we have $\ell_{\tau'}(\v w') \le \ell_\tau (\v w') = 1$. After multiplication by $\lambda'$ we get $\ell_{\tau'}(\v w) \le \ell_\tau(\v w) = \lambda'$. Hence $\deg \v w = \lambda'$. 
 
(b) Choose $\tau$ such that $\ell_\tau(\v w_1 + \v w_2) = \deg(\v w_1 + \v w_2)$. Then,
 \[
\deg(\v w_1 +\v w_2 ) =\ell_\tau (\v w_1 + \v w_2 ) = \ell_\tau(\v w_1 ) + \ell_\tau (\v w_2 ) \le \deg \v w_1 + \deg \v w_2.
\]
(c)``$\Rightarrow$'' It suffices to prove the condition on the right side for all codimension one faces $\tau$. Choose $\tau$ of codimension~1 such that $\v w_1+\v w_2 \in C(\tau)$. Then, by (a), $\deg(\v w_1 +\v w_2 ) =\ell_\tau (\v w_1 + \v w_2 ) = \ell_\tau(\v w_1 ) + \ell_\tau (\v w_2 ) \le \deg \v w_1+ \deg \v w_2$. It is also given that $\deg(\v w_1 + \v w_2) = \deg \v w_1 + \deg \v w_1 $. Hence we must conclude that $\deg \v w_1=\ell_\tau(\v w_1)$
and $\deg \v w_2 =\ell_\tau(\v w_2)$. Hence, again by (a), $\v w_1, \v w_2 \in C(\tau)$. 

``$\Leftarrow$'' Take any codimension~1 face $\tau$ such that $\v w_1+\v w_2 \in C(\tau)$. Then $\v w_1, \v w_2 \in C(\tau)$and it follows from (a) that $\deg(\v w_1 +\v w_2 ) =\ell_\tau (\v w_1 + \v w_2 ) = \ell_\tau(\v w_1 ) + \ell_\tau (\v w_2 ) = \deg \v w_1 + \deg \v w_2$.
\end{proof}

For a subset $S \subset \Delta$ we denote by $HW^{(k)}(S)$ the square submatrix of $HW^{(k)}$ indexed by $\v u, \v v \in (kS)_\Z$. For a proper face $\tau \subsetneq \Delta$ we denote by $P(\tau)$ the pyramid over $\tau$, that is the convex hull of $\tau$ and $\v 0$. The interior of $\tau$ is denoted by $\tau^\circ$, this is the set of points of $\tau$ which do not lie in a subface of $\tau$, except when $\tau$ is a vertex. In that case we take the convention $\tau^\circ = \tau$. In what follows we will denote 
\[
P(\tau^\circ) = \text{ the pyramid over } \tau^\circ \text{ with } \v 0 \text{ removed.}
\]
One can obtain this set from the polytope $P(\tau)$ by removing all its proper faces which contain $\v 0$. 
We remark that the (disjoint) union of $P(\tau^\circ)$ over all proper subfaces $\tau$ of $\Delta$
is precisely $\R^n\setminus\{\v 0\}$.

\begin{proposition}\label{det-HW-decomposition-at-0} With notations as above, one has
\[
\det HW^{(k)}|_{t=0} = \prod_{\tau \subsetneq \Delta} \det HW^{(k)}(P(\tau^\circ))|_{t=0},
\]
where the product is over all proper subfaces $\tau$ of $\Delta$. By convention, when $(k P(\tau^\circ))_\Z=\emptyset$ the respective factor for $\tau$ in the above product is $1$.
\end{proposition}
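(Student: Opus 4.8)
The plan is to show that, after a suitable ordering of the index set $(k\Delta)_\Z$, the specialized matrix $HW^{(k)}|_{t=0}$ is block upper triangular, with a $1\times 1$ block at the apex $\v 0$ (equal to $1$) and one diagonal block $HW^{(k)}(P(\tau^\circ))|_{t=0}$ for every proper face $\tau$ with $(kP(\tau^\circ))_\Z\neq\emptyset$. The asserted product formula is then just the block-triangular determinant identity.

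First I would determine which entries of $HW^{(k)}$ can survive the specialization $t=0$. The $(\v u,\v v)$-entry is $[\v x^{p\v v-\v u}]F^{(k)}$ divided by $t^{\,p\deg\v v-\deg\v u}$. Admissibility of $F^{(k)}$ gives $\ord_t\!\big([\v x^{p\v v-\v u}]F^{(k)}\big)\ge\deg(p\v v-\v u)$, and subadditivity of the degree (Lemma \ref{deg-props}(b), in the form $\deg(p\v v)\le\deg(p\v v-\v u)+\deg\v u$) gives $\deg(p\v v-\v u)\ge p\deg\v v-\deg\v u$. For $\v v\neq\v 0$ the exponent $p\deg\v v-\deg\v u\ge p-k\ge 1$ is positive, so the $(\v u,\v v)$-entry lies in $t\Z_p\lb t\rb$, hence vanishes at $t=0$, unless $\deg(p\v v-\v u)=p\deg\v v-\deg\v u$. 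For $\v v=\v 0$ the entry equals $t^{\deg\v u}\,[\v x^{-\v u}]F^{(k)}$, which vanishes at $t=0$ unless $\v u=\v 0$; and $HW^{(k)}_{\v 0\v 0}|_{t=0}=[\v x^{\v 0}]\big(F^{(k)}|_{t=0}\big)=1$, because $f$, $f(\v x)^p$ and $f^\sigma(\v x^p)$ all reduce to $1$ at $t=0$ (recall $t^\sigma\in t^p\Z_p\lb t\rb$), so only the $r=0$ term of $F^{(k)}$ survives. This handles the $\v 0$-column and produces the apex block.

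Next I would translate the surviving condition $\deg(p\v v-\v u)=p\deg\v v-\deg\v u$ combinatorially. Writing it as $\deg\big((p\v v-\v u)+\v u\big)=\deg(p\v v-\v u)+\deg\v u$ and applying Lemma \ref{deg-props}(c) with $\v w_1=p\v v-\v u$, $\v w_2=\v u$, it becomes: for every proper face $\tau$, $p\v v\in C(\tau)\Rightarrow \v u\in C(\tau)$ (and $p\v v-\v u\in C(\tau)$). Now fix $\v v\neq\v 0$ and let $\eta$ be the unique proper face with $\v v/\deg\v v\in\eta^\circ$, so $\v v\in(kP(\eta^\circ))_\Z$. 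The faces $\tau$ with $p\v v\in C(\tau)$ are exactly the proper faces containing $\eta$, and $\bigcap_{\tau\supseteq\eta}C(\tau)=C(\eta)$ since $\eta$ is itself such a face. Hence a nonzero entry at $t=0$ forces $\v u\in C(\eta)$, i.e. $\v u=\v 0$ or $\v u/\deg\v u\in(\tau'')^\circ$ for a unique subface $\tau''\subseteq\eta$; equivalently $\v u\in(kP((\tau'')^\circ))_\Z$ with $\tau''\subseteq\eta$.

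Finally I would order $(k\Delta)_\Z$ by placing $\v 0$ first and then grouping the remaining points into the blocks $(kP(\tau^\circ))_\Z$ in order of increasing $\dim\tau$; this is a genuine partition of $(k\Delta)_\Z\setminus\{\v 0\}$ by the disjointness remark preceding the proposition. The previous paragraph says precisely that a nonzero $(\v u,\v v)$-entry at $t=0$ with $\v v$ in block $\eta$ has $\v u$ in block $\tau''$ for some $\tau''\subseteq\eta$ (or $\v u=\v 0$), and $\dim\tau''\le\dim\eta$ with equality only if $\tau''=\eta$; so for this ordering $HW^{(k)}|_{t=0}$ is block upper triangular, with the $\tau$-diagonal block equal to $HW^{(k)}(P(\tau^\circ))|_{t=0}$. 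Taking determinants and using that the apex block is $1$ gives the formula. I expect the main obstacle to be the bookkeeping of the second and third paragraphs: keeping straight the conventions for the cone $C(\tau)$ over a relatively open face, the identification of $(kP(\tau^\circ))_\Z$ with the degree-$\le k$ lattice points of that open cone, and checking that these sets do partition $(k\Delta)_\Z\setminus\{\v 0\}$ so that the block decomposition is genuine; everything after that is the routine determinant computation.
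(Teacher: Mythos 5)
Your proposal is correct and follows essentially the same route as the paper's proof: both establish a block upper triangular structure of $HW^{(k)}|_{t=0}$ by treating the $\v v=\v 0$ column separately (with apex entry $1$), using admissibility together with Lemma~\ref{deg-props}(c) to show a nonzero entry forces $\v u\in C(\eta)$ for the face $\eta$ with $\v v\in kP(\eta^\circ)$, and ordering the blocks $\{\v 0\}$, $(kP(\tau^\circ))_\Z$ by increasing dimension of $\tau$. The only differences are cosmetic (your extra remarks on positivity of the exponent and on $\bigcap_{\tau\supseteq\eta}C(\tau)=C(\eta)$ are unnecessary but harmless), so no changes are needed.
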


\begin{proof}We show that the matrix $H:=HW^{(k)}|_{t=0}$ has an upper triangular block structure where
the blocks are indexed by $\v 0$ and the points in $k P(\tau^\circ)$ for all proper subfaces $\tau$ of $\Delta$. We assume the indexes ordered according to increasing dimension of $\tau$, starting with $\v 0$.

Choose $\v v, \v u \in (k \Delta)_\Z$. Let us first deal with $\v v=\v 0$. The coefficient of $\v x^{-\v u}$ in $F^{(k)}$ divided by $t^{-\deg \v u}$ becomes $0$ after setting $t = 0$, unless $\v u=\v 0$. So the matrix column of $H$ corresponding to $\v v=\v 0$ consists of zeroes, except for the entry $H_{\v 0, \v 0}=F^{(k)}|_{t=0}$.
Using \eqref{HW-k-polynomial} and $f=1-tg$ we easily find that $H_{\v 0,\v 0}=1$. 

In general, let $\tau$ be the face such that $\v v$ lies in $P(\tau^\circ)$. The entry $H_{\v u, \v v}$ can be non-zero only if $\deg(p \v v - \v u) = p\deg(\v v) - \deg(\v u)$. Application of Lemma~\ref{deg-props}(c)   with $\v w_1=\v u$ and $\v w_2=p \v v - \v u$ shows that $\v u \in C(\tau)$. Hence either $\v u \in k P(\tau^\circ)$, or $\v u \in k P((\tau')^\circ)$ for some proper subface $\tau' \subset \tau$, or $\v u = \v 0$.  This gives us the  upper triangular block structure and the factorization of the determinant follows.
\end{proof}

In the following lemma we use the assumption $t^\sigma/t^p \in 1+pt\Z_p\lb t \rb$.

\begin{lemma}\label{pyramid-F-HW-matrix-restriction} Let $\tau$ be a proper face of $\Delta$. For $\v u, \v v \in (k P(\tau^\circ))_\Z$ the $(\v u, \v v)$-entry in the matrix $HW^{(k)}(P(\tau^\circ))|_{t=0}$
equals the coefficient of $\v x^{p \v v - \v u}$ in the polynomial  
\be\label{HW-k-polynomial-to-C(F)-0}
F^{(k)}_{h}(\v x) = h(\v x)^{p-k} \sum_{r=0}^{k-1} (h(\v x^p) - h(\v x)^p)^r h(\v x^p)^{k-1-r},
\ee
where $h(\v x) = 1- \sum_{\v w \in \tau_\Z} g_\v w \, \v x^\v w$. Here $g_{\v w}$ is the coefficient
of $\v x^{\v w}$ in $g(\v x)$.
\end{lemma}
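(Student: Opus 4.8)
The statement to be proved is Lemma \ref{pyramid-F-HW-matrix-restriction}, which identifies the diagonal block $HW^{(k)}(P(\tau^\circ))|_{t=0}$ of the Hasse--Witt matrix with the Hasse--Witt-type matrix coming from the face polynomial $h(\v x) = 1 - \sum_{\v w \in \tau_\Z} g_\v w \v x^\v w$. The idea is to track, monomial by monomial, which terms of $F^{(k)}(\v x)$ from \eqref{HW-k-polynomial} survive when we (i) divide by $t^{p\deg\v v - \deg\v u}$, (ii) set $t=0$, and (iii) restrict the row/column indices to $\v u, \v v \in (kP(\tau^\circ))_\Z$.

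First I would reduce to a statement purely about monomials. Writing $f = 1 - tg$ and $f^\sigma = 1 - t^\sigma g$, expand $F^{(k)}(\v x)$ as a polynomial in $t$ and $t^\sigma$ with coefficients that are products of monomials $\v x^{\v w}$ with $\v w$ ranging over sums of points in $\Delta_\Z$; each occurrence of $tg$ contributes a factor $t$ and a monomial $\v x^{\v w}$ of degree $\deg \v w \le 1$ lying in $C(\Delta)$, and similarly for $t^\sigma g$, using $t^\sigma/t^p \in 1 + pt\Z_p\lb t\rb$ so that $t^\sigma \equiv t^p \bmod t^{p+1}$. A term contributing to the $(\v u,\v v)$-entry is a product of such monomials whose total exponent equals $p\v v - \v u$; its total $t$-order is at least $p\deg\v v - \deg\v u$ by the triangle inequality Lemma~\ref{deg-props}(b) applied to the pieces, and after dividing by $t^{p\deg\v v - \deg\v u}$ and setting $t=0$ only those terms where equality is attained survive. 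The key combinatorial point — this is where Lemma~\ref{deg-props}(c) does the work — is that equality in $\deg(\sum \v w_i) = \sum \deg \v w_i$ forces every $\v w_i$ to lie in $C(\tau)$, given that the total $p\v v - \v u$ lies in $C(\tau)$ (which holds because $\v v \in P(\tau^\circ)$ and, as in the proof of Proposition~\ref{det-HW-decomposition-at-0}, the nonvanishing of the entry already forces $\v u \in C(\tau)$). Since $C(\tau)$ is the cone over the face $\tau$, the only monomials $\v x^{\v w}$ from $g$ that can appear with $\v w \in C(\tau)$ of degree $\le 1$ are precisely the $\v x^{\v w}$ with $\v w \in \tau_\Z$, i.e. exactly the monomials retained in $h(\v x)$.

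Next I would match the surviving expression with $F^{(k)}_h$. In $F^{(k)}$ each factor is either $f = 1 - tg$ (coefficient $1$ at $t$-order $0$, $-g$ at order $1$), or $f^\sigma(\v x) = 1 - t^\sigma g$, or $f^\sigma(\v x^p)$, or $f(\v x)^p$; all $t$-orders must add up exactly to $p\deg\v v - \deg\v u$ for survival at $t=0$, and modulo that constraint the coefficient of the surviving monomial is obtained by replacing, in each factor, $t \mapsto 1$ (respectively $t^\sigma \mapsto 1$) on the pieces that lie in $C(\tau)$ and dropping the pieces that do not. This turns $f$ into $h(\v x)$, $f^\sigma(\v x)$ into $h(\v x)$, $f^\sigma(\v x^p)$ into $h(\v x^p)$, and $f(\v x)^p$ into $h(\v x)^p$: here one must also check, using $t^\sigma \equiv t^p \bmod t^{p+1}$, that the order-$(\ge p{+}1)$ corrections to $t^\sigma$ never contribute, and that $(f^\sigma(\v x^p) - f(\v x)^p)^r$ behaves correctly — note $f^\sigma(\v x^p) - f(\v x)^p = (t^p - t^\sigma)g(\v x^p) + \big(f(\v x^p)- f(\v x)^p\big)$-type terms, and only the pieces with exponent in $C(\tau)$ and of the right $t$-order are kept, yielding $h(\v x^p) - h(\v x)^p$. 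Substituting gives exactly \eqref{HW-k-polynomial-to-C(F)-0}.

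**Main obstacle.** The delicate point is the bookkeeping in the previous paragraph: one must argue that the division by $t^{p\deg\v v-\deg\v u}$ is valid entry-by-entry (i.e. the $t$-order of each entry really is $\ge p\deg\v v - \deg\v u$, which follows from admissibility of $F^{(k)}$ as already noted after its definition), and then that at $t=0$ the surviving coefficient is precisely the coefficient read off from the face polynomial, with no spurious cross-terms between $C(\tau)$-pieces and non-$C(\tau)$-pieces and no contribution from the $t^\sigma$-vs-$t^p$ discrepancy. Both are handled by the same mechanism — a term of $F^{(k)}$ that uses a monomial of $g$ with exponent outside $C(\tau)$ strictly increases the total degree relative to the total $t$-order, hence dies at $t=0$ — so once Lemma~\ref{deg-props}(c) is invoked the argument is essentially forced; the work is in writing it cleanly rather than in any real difficulty. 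I would present it as: fix $\v u, \v v \in (kP(\tau^\circ))_\Z$; expand $F^{(k)}$; classify monomials; apply Lemma~\ref{deg-props}(b),(c); conclude.
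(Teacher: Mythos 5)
Your proposal is correct in substance, but it takes a genuinely different route from the paper. The paper's proof is a one-step substitution trick: writing $\ell_\tau(\v w)=\sum_i \ell_i w_i$, it replaces $x_i$ by $x_i/t^{\ell_i}$, so each monomial $\v x^{\v w}$ acquires the factor $t^{-\ell_\tau(\v w)}=t^{\delta-\deg\v w}$ with $\delta\ge 0$ and $\delta=0$ precisely when $\v w\in C(\tau)$ (only part (a) of Lemma~\ref{deg-props} is used). By admissibility the rescaled $f(\v x)$, $f^\sigma(\v x^p)$ and $F^{(k)}(\v x)$ still have coefficients in $\Z_p\lb t \rb$, and setting $t=0$ converts them wholesale into $h(\v x)$, $h(\v x^p)$ and $F^{(k)}_h(\v x)$, while for $\v u,\v v\in (kP(\tau^\circ))_\Z$ the rescaling of $\v x^{p\v v-\v u}$ produces exactly the division by $t^{p\deg\v v-\deg\v u}$ occurring in the definition of the matrix entry; the lemma then drops out with no term-by-term analysis. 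Your expansion-and-survival argument reaches the same conclusion via parts (b) and (c) of Lemma~\ref{deg-props}: it is more explicit and makes the mechanism (order strictly exceeds degree off the cone, hence death at $t=0$) transparent, but it has to track cross-terms, extend part (c) to several summands, and dispose of the $t^\sigma$ versus $t^p$ discrepancy by hand, all of which you do correctly.

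One justification should be repaired. You assert that $p\v v-\v u\in C(\tau)$ \emph{because} $\v v\in P(\tau^\circ)$ and $\v u\in C(\tau)$; a difference of two elements of a cone need not lie in the cone, so as stated this inference is invalid. The fact you need is nevertheless true for every surviving term: survival forces the coefficient's $t$-order to equal $p\deg\v v-\deg\v u$, which by admissibility gives $\deg(p\v v-\v u)=p\deg\v v-\deg\v u$, i.e. $\deg\bigl(\v u+(p\v v-\v u)\bigr)=\deg\v u+\deg(p\v v-\v u)$; since $p\v v\in C(\tau)$, Lemma~\ref{deg-props}(c) applied to the pair $\bigl(\v u,\,p\v v-\v u\bigr)$ yields both $\v u\in C(\tau)$ and $p\v v-\v u\in C(\tau)$. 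This is exactly the application already made in the proof of Proposition~\ref{det-HW-decomposition-at-0}, whose conclusion includes the second membership as well, so the gap is purely one of phrasing and is immediately fixable.
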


\begin{proof}
Write $\ell_\tau(\v w)=\sum_{i=1}^n \ell_i w_i$ with $\ell_i \in \Z$. Let us replace $x_i$ by 
$x_i/t^{\ell_i}$ for $i=1,\ldots,n$. Observe that the monomial $\v x^\v w$ gets replaced by
$\v x^\v w / t^{\ell_\tau(\v w)} = t^\delta \v x^\v w / t^{\deg \v w}$, where $\delta \ge 0$
and $\delta = 0$ if and only if $\v w \in C(\tau)$ as a consequence of Lemma~\ref{deg-props}(a).
Carrying out this substitution in the polynomials $f(\v x)$, $f^\sigma(\v x^p)$
and~\eqref{HW-k-polynomial} and setting $t=0$ we obtain respectively $h(\v x)$, $h(\v x^p)$ 
and~\eqref{HW-k-polynomial-to-C(F)-0}. Our claim immediately follows from this. 
\end{proof}

From now on we will consider the situation when the proper subfaces of $\Delta$ are simplices
of volume $1$.

\begin{proposition}\label{det-HW-pyramid-p-adic-order} Let $\tau$ be a simplicial face of volume $1$ spanned by vertices $\v v_1, \ldots, \v v_s$. Let $h(\v x) = 1- \sum_{i=1}^s h_i \v x^{\v v_i}$ be a Laurent polynomial with $h_i \in \Z_p^\times$ for $0 \le i \le s$. 

Then the set $(k P(\tau^\circ))_\Z$ is empty when $k < s$. When $k \ge s$ let $H$ be the matrix indexed by $\v u, \v v \in (k P(\tau^\circ))_\Z$, with entries given by the coefficient of $\v x^{p \v v - \v u}$ in the polynomial $F^{(k)}_h(\v x)$ given by formula~\eqref{HW-k-polynomial-to-C(F)-0}. Then the $p$-adic order of $\det(H)$ equals
\[
\sum_{\v u \in (k P(\tau^\circ))_\Z} \left(\deg(\v u) - 1 \right).
\]
\end{proposition}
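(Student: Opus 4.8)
The plan is to reduce the computation to a single simplex of volume~1, where the combinatorics becomes transparent because the lattice points of $kP(\tau^\circ)$ are in bijection with tuples of nonnegative integers. First I would record the emptiness claim: since $\tau$ is a simplex on $s$ vertices and $P(\tau^\circ)$ is the pyramid over the relative interior (with $\v 0$ removed), a lattice point $\v u\in (kP(\tau^\circ))_\Z$ of degree $d$ lies on the boundary of $d\Delta$ inside the relative interior of $d\tau$; writing $\v u=\sum_i c_i\v v_i$ with $c_i\in\Z_{\ge 1}$ (volume~1 allows integer coordinates, interior forces all $c_i\ge 1$), one gets $\deg(\v u)=\sum_i c_i\ge s$. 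Hence $(kP(\tau^\circ))_\Z=\emptyset$ for $k<s$, and for $k\ge s$ the set is parametrized by $\{(c_1,\dots,c_s)\in\Z_{\ge 1}^s:\sum c_i\le k\}$ with $\deg=\sum c_i$.

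Next I would analyze the matrix $H$ itself. By Lemma~\ref{pyramid-F-HW-matrix-restriction} the $(\v u,\v v)$-entry is the coefficient of $\v x^{p\v v-\v u}$ in $F^{(k)}_h(\v x)$. Using the volume-1 simplex coordinates, introduce new variables $y_i$ with $\v x^{\v v_i}=y_i$; then $h(\v x)=1-\sum_i h_i y_i$ becomes an ordinary polynomial in $y_1,\dots,y_s$, and $F^{(k)}_h$ is the polynomial $\tilde F(y)=h(y)^{p-k}\sum_{r=0}^{k-1}(h(y^p)-h(y)^p)^r h(y^p)^{k-1-r}$, while the monomial $\v x^{p\v v-\v u}$ corresponds to $y^{p\v c'-\v c}$ for the coordinate tuples $\v c,\v c'$ of $\v u,\v v$. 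So $H$ is exactly the Hasse--Witt-type matrix attached to the multivariate polynomial $h(y)$ on the simplex $\{\v c\in\Z_{\ge1}^s:\sum c_i\le k\}$. I would then observe that $\tilde F(y)\equiv -\,k\,h(y^p)^{k-1}\!\cdot h(y)^{p-k}\cdot(\text{something})$ is not literally the point; rather, the key structural fact from \cite[\S5]{BeVlIII} already used in the paper is that $\det HW^{(k)}$ is divisible by $p^{L(k)}$ with $L(k)=\sum_{\v u\ne\v 0}(\deg\v u-1)$, and the claim here is that, over the interior pyramid of a volume-1 simplex, this divisibility is \emph{exact}. Therefore the real content is a lower bound: the $p$-adic valuation of $\det(H)$ is at most $\sum_{\v u\in(kP(\tau^\circ))_\Z}(\deg\v u-1)$.

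To get this upper bound on the valuation I would exhibit a nonvanishing reduction. The idea is to clear the "expected" powers of $p$ from the rows (or columns) of $H$: divide the row indexed by $\v u$ by $p^{\,\deg(\v u)-1}$ and show the resulting integral matrix is invertible mod $p$. Concretely, I expect that modulo $p$ the renormalized matrix becomes block-triangular with respect to the filtration by degree, and each diagonal block is a Vandermonde-like or binomial-coefficient matrix in the units $h_1,\dots,h_s\in\Z_p^\times$ — its nonvanishing mod~$p$ will use $p>n\ge s$ so that the relevant multinomial coefficients $\binom{k-s}{\cdots}$, $\binom{p-k}{\cdots}$ are units, exactly as in the hypothesis $p>n$. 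This is the step I expect to be the main obstacle: identifying the precise combinatorial shape of the leading term of each entry of $F^{(k)}_h$ — that is, which power of $p$ divides the coefficient of $y^{p\v c'-\v c}$, and showing the "boundary" contributions (coming from $h(y)^{p-k}$ and the single top power $h(y^p)^{k-1}$) assemble into an invertible matrix after renormalization, while all lower-order contributions are strictly more $p$-divisible and hence disappear. Combined with the divisibility $p^{L(k)}\mid\det$ restricted to this block (which follows from Proposition~\ref{det-HW-decomposition-at-0} and \cite[Proposition 5.7]{BeVlIII}), the two bounds match and give the stated exact valuation.
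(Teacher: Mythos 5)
Your first step (coordinates $y_i=\v x^{\v v_i}$ on the volume-$1$ simplex, the bijection of $(kP(\tau^\circ))_\Z$ with vectors $\v c\in\Z_{\ge1}^s$, $\sum_i c_i\le k$, and hence emptiness for $k<s$) agrees with the paper. But the central step of your plan fails. You propose to ``clear the expected powers of $p$'' by dividing the row indexed by $\v u$ by $p^{\deg(\v u)-1}$ and then proving mod-$p$ invertibility. The entries of $H$ are simply not divisible by these powers, row-wise or column-wise. Concretely, take $s=1$, $h=1-y$, $k\ge 2$: since $F^{(k)}_h\equiv h(y)^{p-k}h(y^p)^{k-1}\pmod p$, the entry of $H$ in the row $u=k$ and column $v=1$ is the coefficient of $y^{p-k}$, namely $(-1)^{p-k}$ plus a multiple of $p$ --- a unit --- although your renormalization would require it to be divisible by $p^{k-1}$. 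So the valuation $\sum_{\v u}(\deg\v u-1)$ of $\det(H)$ does not arise from entry-wise divisibility along rows or columns; it comes from cancellations, and the block-triangular-after-renormalization picture you sketch cannot be made to work as stated. (Your auxiliary claim that the per-block divisibility $p^{\sum(\deg\v u-1)}\mid\det H$ follows from Proposition~\ref{det-HW-decomposition-at-0} together with the global divisibility of $\det HW^{(k)}$ is also a non sequitur: knowing the total valuation is at least the sum of the expected block contributions does not bound any single factor from below.)

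What the paper does instead is compute the exact valuation in one stroke via elementary divisors, with no separate upper/lower bound matching. After the change of variables one observes $F^{(k)}_{\tilde h}\equiv \tilde h(\v y^p)^k/\tilde h(\v y)^k\pmod{p^k}$ (a telescoping identity you did not isolate), expands this quotient explicitly, and writes the resulting matrix $H'\equiv H\pmod{p^k}$ as $G^{-1}MNG^p$ with $G$ diagonal in the units $h_i$ and $N$ unipotent. Dividing the columns of $M$ by quantities $\phi_{\v v}$ of exact $p$-order $s-1$, the entries become values at $(pv_1,\ldots,pv_s)$ of Pochhammer-type polynomials of total degree $k-s$; Lemma~\ref{pochhammer-basis} shows these form a $\Z_p$-basis of polynomials of degree $\le k-s$ (this is where $p>n$ enters), so an invertible row operation turns $M$ into the matrix with entries $p^{|\v w|}\v v^{\v w}$, whose factor $(\v v^{\v w})$ is invertible over $\Z_p$. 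This yields elementary divisors $p^{|\v u|-1}$ for $H'$, and since all exponents are $<k$ the valuation of $\det(H)$ is their sum. That Smith-normal-form mechanism --- divisibility appearing only after an invertible change of basis, not on raw rows or columns --- is the key idea missing from your proposal, and the step you yourself flagged as the ``main obstacle'' is exactly where it is needed.
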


\begin{proof} 
Let us introduce the coordinates $y_i=\v x^{\v v_i}$ for $i=1,\ldots,s$. 
Since $\v v_1, \ldots, \v v_s$
are generators of a volume 1 simplex, every $\v x^{\v w}$ with $\v w\in C(\tau^\circ)$ can be written as a
a monomial in $y_1,\ldots,y_s$ (with exponents $>0$). Thus we see that
we can restrict ourselves to a polynomial $\tilde{h}(\v y)=1-h_1y_1-\cdots-h_sy_s$. Then
$F^{(k)}_{\tilde{h}(\v y)}=F^{(k)}_{h(\v x)}$. Furthermore, the degree
of the monomial $\v x^{\v u}$ is now simply the usual total degree of the corresponding monomial in
$y_1,\ldots,y_s$. The monomials $\v x^{\v u}\in(kP(\tau^\circ))_\Z$ simply become the monomials
$\v y^{\v w}$ with $\v w\in Q_k$ where $Q_k:=\{\v w|w_1,\ldots,w_s>0,w_1+\cdots+w_s\le k\}$. 
In particular $Q_k$ is empty $k<s$, so we see that 
$H_{\tilde{h}(\v y)}^{(k)}=1$ if $k<s$, as asserted in our proposition.

Now assume that $k\ge s$. Observe that 
\[
F^{(k)}_{\tilde{h}(\v y)}=\frac{(\tilde{h}(\v y^p)-\tilde{h}(\v y)^p)^k-\tilde{h}(\v y^p)^k}{-\tilde{h}(\v y)^k}
\is\frac{\tilde{h}(\v y^p)^k}{\tilde{h}(\v y)^k}\is\frac{(1-h_1y_1^p-\cdots-h_sy_s^p)^k}
{(1-h_1y_1-\cdots-h_sy_s)^k}\mod{p^k}.
\]
Expand the right hand side as power series in 
$y_1,\ldots,y_s$.
\be\label{HW-polynomial-cone-y}
\frac{\tilde{h}(\v y^p)^k}{\tilde{h}(\v y)^k}=\sum_{j=0}^k(-1)^j\binom{k}{j}(h_1y_1^p+\cdots+h_sy_s^p)^j
\sum_{i=0}^k\binom{i+k-1}{k-1}(h_1y_1+\cdots+h_sy_s)^i.
\ee
Let $H'\is H\mod{p^k}$ be the matrix with entries 
\[
H'_{\v u,\v v}=\text{coefficient of $\v y^{p\v u-\v v}$ in \eqref{HW-polynomial-cone-y}},
\]
where $\v u,\v v\in Q_k$.

In what follows we will compute the Smith normal form of $H'$. To every $r \times r$ matrix $M$ with coeffcients in $\Z_p$ there exist matrices $S,T\in {\rm GL}_r(\Z_p)$ such that $S M T = {\rm diag}(p^{k_1},\ldots,p^{k_s})$ with $k_1 \le k_2 \le \ldots \le k_r$. This is the Smith normal form of $M$. The entries $p^{k_i}$ are uniquely determined and known as elementary divisors of $M$.

Let us make an observation. Suppose that the elementary divisors of $M$ satisfy $k_\ell < k$ for all $\ell$ and $H \is M \mod{p^k}$. Then the $p$-adic order of $\det(H)$ equals $\sum_{\ell=1}^r k_\ell$. To prove this fact consider $S H T = {\rm diag}(p^{k_1},\ldots,p^{k_s}) + O(p^k)$ and multiply on both sides by the inverse of the Smith normal matrix. Since $k_{\ell}<k$ for all $\ell$ we get 
${\rm diag}(p^{k_1},\ldots,p^{k_s})^{-1}SHT = Id + O(p)$ and take the determinant on both sides. 

We will show that the elementary divisors of $H'$ are given by
\be\label{elementary-divisors}
p^{|\v u|-1}, \qquad \v u \in Q_k,
\ee
where $|\v u|=u_1+\cdots+u_s$.
The statement of the proposition then follows from this fact due to the observation made above. 

Consider $\v u,\v v\in Q_k$. The entry $H'_{\v u, \v v}$ is the coefficient of $\v y^{p \v v - \v u}$ in the 
right-hand side of~\eqref{HW-polynomial-cone-y}. The contributions to this coefficient come from products
of the form $(\sum_rh_ry_r^p)^j(\sum_rh_ry_r)^i$, where $pj+i=p\deg\v v-\deg\v u$. Take a term
with $\v y^{p\v w}$ in $(\sum_rh_ry_r^p)^j$ and a term with $\v y^{\v w'}$ in $(\sum_rh_ry_r)^i$. 
Then $p\v w+\v w'=p\v v-\v u$, hence $\v w'+\v u\is0\mod{p}$. Since $\v w'+\v u$ has positive entries 
there exists an integer vector $\v v'$ with positive entries such that $\v w'+\v u=p\v v'$, hence 
$\v v'=\v v-\v w$. Since $\v v\in Q_k$ we conclude that $\v v'\in Q_k$. Moreover, $\v w=\v v-\v v'$
and $\v w'=p\v v'-\v u$. 
Consequently, $H'_{\v u,\v v}$ equals
the sum of the products
\[\bal
&\text{coeff of $\v y^{p(\v v -\v v')}$ in} (-1)^{|\v v-\v v'|}\binom{k}{|\v v-\v v'|}
\left(\sum_rh_ry_r^p\right)^{|\v v-\v v'|}\\
&\times \text{coeff of $\v y^{(p\v v'-\v u)}$ in}\binom{|p\v v'-\v u|+k-1}{k-1}
\left(\sum_rh_ry_r\right)^{|p\v v'-\v u|}
\eal\]
taken over $\v v'\in Q_k$. The coefficient of $\v y^\v z$ in $(\sum_rh_ry_r)^{|\v z|}$ equals
$\v h^{\v z}$ times
\[
\binom{|\v z|}{\v z} = \frac{|\v z|!}{z_1! \dots z_s!},
\]
where $\v h^{\v z}=h_1^{z_1}\cdots h_s^{z_s}$. 
In the following we use the convention that $\binom{|\v z|}{\v z}=0$ if one of the components
of $\v z$ is negative.

We obtain the expression
\[\bal
H'_{\v u, \v v} = \v h^{p\v v-\v u}\sum_{\v v'\in Q_k}&  (-1)^{|\v v-\v v'|}\binom{k}{|\v v-\v v'|} 
\binom{|\v v-\v v'|}{\v v-\v v'} \\
&\times \binom{|p\v v' -\v u| + k - 1}{k-1} \binom{|p\v v' - \v u|}{p \v v' - \v u}
\eal\] 
Therefore we can write $H' = G^{-1}MNG^p$
where all matrices have the same indexing set $Q_k$, $G={\rm diag}(\v h^{\v u})_{\v u\in Q_k}$, and
\[
M_{\v u, \v v} = \binom{|p\v v -\v u|+ k - 1}{k-1} \binom{p|\v v - \v u|}{p \v v - \v u} 
\]
and $N_{\v u, \v v} = (-1)^{|\v v-\v u|}\binom{k}{|\v v-\v u|} \binom{|\v v-\v u|}{\v v-\v u}$.
If we order indices with
non-decreasing degree then $N$ will be upper triangular with $1$'s on the diagonal, so $N$ is also invertible.
Therefore the elementary divisors of $H'$ are the same as of $M$. To compute them let us introduce 
\[
\phi_\v v = \frac1{(k-1)!} \frac{(p|\v v|-1)!}{(p v_1-1)! \ldots (p v_s-1)!}, \quad \v v \in Q_k.
\]
Since $p>k$ and $|\v v| \le k$, one easily sees that $\phi_\v v$ is exactly divisible by $p^{s-1}$. We have
\be\label{M-divided}
M_{\v u, \v v} / \phi_\v v = (p|\v v|)_{k  - |\v u|} \prod_{i=1}^s [p v_i - 1]_{u_i - 1},
\ee
where 
\[
(x)_n = x(x+1)\dots(x+n-1)
\]
is the usual Pochhammer symbol and 
\[
[x]_n = x(x-1)\dots(x-n+1)
\]

is its descending version. Denote by $M'_{\v u, \v v}$ the expression in~\eqref{M-divided}. 
We have $M = M' \; {\rm diag}(\phi_\v v)$. The entry $M'_{\v u, \v v}$ is polynomial of total degree $k-s$ 
in $p v_1, \ldots, p v_s$. Even better, as $\v u$ runs over $Q_k$ these polynomials form a basis in the 
$\Z_p$-module of all polynomials of total degree $\le k-s$. This assertion is proved below in
Lemma~\ref{pochhammer-basis}(a). Therefore there exists a matrix $S$, invertible in $\Z_p$,
such that each column of $M'' = S M'$ consist of all monomials of degree $\le k-s$ in $p v_1, \ldots, p v_s$ in the same ordering: 
\[
M''_{\v w, \v v} = (p \v v)^{\v w} = p^{\deg \v w} \v v ^\v w,
\]
where $\v w$ runs over the integer with positive coordinates and $|\v w| \le k-s$.
We note that this set is in a bijective correspondence with $Q_k$ via the map
\[ 
\v w \to \v u = \v w + (1,\ldots,1).
\]
Lemma~\ref{pochhammer-basis}(b) shows that the square matrix $(\v v^\v w)$ is invertible over $\Z_p$. 
Therefore the elementary divisors of $M''$ and $M'$ are given by $p^{\v w|}$ and the elementary divisors of 
$M$ and $H'$ are $p^{|\v w| + s-1}=p^{|\v u| - 1}$, as was claimed in~\eqref{elementary-divisors}.

\end{proof}

We now prove two auxiliary statements which were used in the last paragraph of the above proof.

\begin{lemma}\label{pochhammer-basis} Let $s \ge 1$ and $m \ge 0$.
\begin{itemize} 
\item[(a)]  Consider the set of polynomials in $s$ variables $\v z = (z_1,\ldots, z_s)$ of total degree~$m$ defined as
\[
P_{m,\v w}(\v z) = (z_1+\ldots+z_s)_{m  - \sum_{i=1}^s w_i} \prod_{i=1}^s [z_i - 1]_{w_i}, \quad 
\]
where $\v w = (w_1,\ldots,w_s)$ runs over the set of vectors 
\[
\v w \in \Z_{\ge 0}^s \text{  with } \sum_{i=1}^s w_i \le m.
\]
When $p \ge m+s$ these polynomials form a basis in the $\Z_p$-module of all polynomials of total degree~$\le m$. 
\item[(b)] Consider the square matrix $(\v v^{\v w})$, where rows are indexed by $\v w$ running over the same set as in part (a) and columns are indexed by
\[
\v v \in \Z_{\ge 1}^s \text{  with } \sum_{i=1}^s v_i \le m+s.
\]
When $p > m$ one has $\det(\v v^\v w) \in \Z_p^\times$. 
\end{itemize}
\end{lemma}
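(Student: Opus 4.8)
\textbf{Plan for the proof of Lemma~\ref{pochhammer-basis}.}

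The plan is to handle the two parts essentially independently, each by a ``leading-term/triangularity'' argument after introducing a suitable partial order on the index set, with the prime-size hypotheses entering only to guarantee that a handful of explicit integer factors are $p$-adic units.

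For part (a), first I would count dimensions: the $\Z_p$-module of polynomials in $\v z$ of total degree $\le m$ has rank $\binom{m+s}{s}$, and one checks by a standard stars-and-bars argument that the number of admissible $\v w$ (i.e.\ $\v w\in\Z_{\ge0}^s$ with $\sum w_i\le m$) is exactly the same. So it suffices to prove that the $P_{m,\v w}$ are $\Z_p$-linearly independent modulo $p$, or equivalently that they span. The key point is to identify the ``top-degree part'' of $P_{m,\v w}$. Writing $d=\sum_i w_i$, the factor $(z_1+\cdots+z_s)_{m-d}$ has leading form $(z_1+\cdots+z_s)^{m-d}$ and the factor $\prod_i[z_i-1]_{w_i}$ has leading form $\prod_i z_i^{w_i}=\v z^{\v w}$, so $P_{m,\v w}$ has leading form $(z_1+\cdots+z_s)^{m-d}\v z^{\v w}$, a homogeneous polynomial of degree exactly $m$ whose coefficients are multinomial coefficients. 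Expanding $(z_1+\cdots+z_s)^{m-d}=\sum_{|\v e|=m-d}\binom{m-d}{\v e}\v z^{\v e}$ shows that, in the monomial basis of degree-$m$ forms, the coefficient matrix of $\{(z_1+\cdots+z_s)^{m-d}\v z^{\v w}\}$ indexed appropriately is unitriangular (with respect to, say, the order on $\v w$ refining the partial order $\v w\preceq\v w'$ iff $\v w'-\v w\in\Z_{\ge0}^s$, together with the observation that the monomial $\v z^{m-d+\v w'}$ occurs in $(z_1+\cdots+z_s)^{m-d'}\v z^{\v w'}$ only when $\v w'\preceq\text{that exponent}$). Here $p\ge m+s$ guarantees that all the multinomial coefficients $\binom{m-d}{\v e}$ and the Pochhammer integers appearing as subleading corrections have numerators with factors $\le m$, hence are $p$-adic units where they need to be, and in particular the diagonal entries $\binom{m-d}{\v e}$ with $\v e$ concentrated in one coordinate are $1$. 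So the change-of-basis matrix from $\{(z_1+\cdots+z_s)^{m-d}\v z^{\v w}\}$ to monomials, and hence from $\{P_{m,\v w}\}$ (after also sorting out the lower-order corrections, which only affect strictly lower total degree and so only strengthen triangularity) to the monomial basis, is invertible over $\Z_p$.

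For part (b), the cleanest route is to connect it directly to (a). The matrix $(\v v^{\v w})$ pairs exponent vectors $\v w$ with $|\v w|\le m$ against evaluation points $\v v\in\Z_{\ge1}^s$ with $|\v v|\le m+s$; substituting $\v v=\v w'+(1,\dots,1)$ identifies the column index set with the row index set of (a). Since $\{P_{m,\v w}\}$ is a $\Z_p$-basis of polynomials of degree $\le m$ by part (a), and $\{\v z^{\v w}\}_{|\v w|\le m}$ is the obvious monomial basis of the same module, it is enough to show that the evaluation matrix $\big(P_{m,\v w}(\v v)\big)_{\v w,\v v}$ is invertible over $\Z_p$ for these choices of $\v v$, because the two evaluation matrices differ by the invertible change of basis from (a). But $P_{m,\v w}(\v v)$ has a conspicuous zero pattern: the factor $\prod_i[v_i-1]_{w_i}=\prod_i (v_i-1)(v_i-2)\cdots(v_i-w_i)$ vanishes whenever some $w_i\ge v_i$, i.e.\ whenever $\v w\not\preceq \v v-(1,\dots,1)$. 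Ordering both index sets by a linear extension of the componentwise partial order then makes $\big(P_{m,\v w}(\v v)\big)$ triangular, with diagonal entries $P_{m,\v v-(1,\dots,1)}(\v v)=(v_1+\cdots+v_s)_{\,m+s-|\v v|}\prod_i (v_i-1)!$. Each such diagonal entry is a product of integers lying strictly between $1$ and $m+s-1$ in absolute value together with factorials of integers $<m+s$; invoking $p>m$ (which, combined with $|\v v|\ge s$, forces every factor appearing to be $<p$) shows every diagonal entry is a $p$-adic unit, so the matrix is invertible over $\Z_p$ and therefore so is $(\v v^{\v w})$.

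\textbf{Main obstacle.} The only genuinely delicate bookkeeping is in part (a): verifying that the leading-form matrix is unitriangular requires choosing the right total order on the $\v w$'s and checking that the subleading terms of $P_{m,\v w}$ (coming from lower-order terms of the Pochhammer factors) cannot spoil the triangularity — they live in strictly smaller total degree, so on the associated graded they contribute nothing, but one must phrase the argument so that this is manifest. I expect that filtering the polynomial module by total degree and arguing on $\mathrm{gr}$ makes this clean; the prime bound $p\ge m+s$ is then exactly what is needed to keep every multinomial and Pochhammer integer that shows up a unit in $\Z_p$. Part (b) should then be essentially formal given (a) and the vanishing pattern of the descending Pochhammer factors.
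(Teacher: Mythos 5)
The main gap is in part (a). Every $P_{m,\v w}$ has total degree exactly $m$: the rising factorial contributes degree $m-\sum_i w_i$ and the falling factorials degree $\sum_i w_i$. So you have $\binom{m+s}{s}$ polynomials whose leading forms $(z_1+\cdots+z_s)^{m-\sum_i w_i}\v z^{\v w}$ all lie in the module of homogeneous forms of degree $m$, which has rank only $\binom{m+s-1}{s-1}$. These leading forms are therefore necessarily linearly dependent, the coefficient matrix you describe is rectangular and cannot be unitriangular, and an argument on the graded pieces for the total-degree filtration cannot detect independence at all: the lower-order terms, which you dismiss as corrections that ``contribute nothing on the associated graded'', are exactly what makes the family a basis. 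A concrete check: for $s=1$, $m=2$ the three polynomials are $z(z+1)$, $z(z-1)$, $(z-1)(z-2)$; their matrix with respect to $1,z,z^2$ has determinant $4$ (so they are not a basis over $\Z_2$, and no unimodular triangular structure exists --- the matrix has only two zero entries, so no permutation makes it triangular). The paper proves (a) by a genuinely different mechanism: induction on $m$ via the identity $\sum_{i=1}^s P_{m,\v w+\v e_i}(\v z)=P_{m,\v w}(\v z)-(m+s-1)P_{m-1,\v w}(\v z)$, where the hypothesis $p\ge m+s$ enters precisely to invert $m+s-1$; this, together with $P_{m,\v w}=\v z^{\v w}+(\text{lower degree})$ when $\sum_i w_i=m$, yields that the $P_{m,\v w}$ span, hence form a basis.

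For part (b) your vanishing-pattern triangularity is the right idea and matches the paper's, but applying it to the evaluations $P_{m,\v w}(\v v)$ instead of the monic products $Q_{\v w}(\v z)=\prod_i[z_i-1]_{w_i}$ does not give the lemma as stated: your diagonal entries $(v_1+\cdots+v_s)_{m+s-|\v v|}\prod_i(v_i-1)!$ involve factors as large as $m+s-1$, which need not be prime to $p$ under the hypothesis $p>m$ alone (e.g.\ $s=2$, $m=2$, $p=3$, $\v v=(1,1)$ gives $(2)_2=6$), and your reduction to (a) also needs the stronger bound $p\ge m+s$. The paper avoids both issues by comparing the monomial basis with the $Q_{\v w}$, whose transition matrix is unitriangular with determinant $\pm1$ unconditionally, so that $\det(\v v^{\v w})=\pm\prod_{\v v}\prod_i(v_i-1)!$ with every factor at most $m!$, a $p$-adic unit exactly when $p>m$.
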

\begin{proof} (a) We fix $p$ and $s$ and prove the statement by induction in $m$. When $m=0$ we have one polynomial $P_{0,\v 0}(\v z)=1$ and the claim is obvious. Let $m>0$ and suppose that the statement is true for $m-1$. Let $A$ denote the $\Z_p$-module spanned by $P_{m,\v w}(\v z)$ for all $\v w$ with $\sum_{i=1}^mw_i \le m$. Let $\v e_1,\ldots,\v e_s$ be the standard basis vectors in $\Z^s$. Take any $\v w$ with $\sum_{i=1}^mw_i < m$ and note that 
\[\bal
\sum_{i=1}^s P_{m, \v w+\v e_i}(\v z) &=\left( \left(z_1+\ldots+z_s\right)_{m - 1 - \sum_{i=1}^sw_i} \prod_{i=1}^s [z_i - 1]_{w_i} \right) \sum_{i=1}^s (z_i - w_i-1) \\
& = P_{m,\v w}(\v z) -  \left(m+s-1\right) P_{m-1,\v w}(\v z).\\
\eal\]
Since $0<m+s-1<p$ we deduce from the above formula that polynomial $P_{m-1,\v w}(\v z)$ belongs to $A$. Now it follows from our inductional assumption that $\v z^\v w \in A$ for any $\v w$ with $\sum_{i=1}^{s} w_i < m$. It remains to consider monomials $\v z^\v w$ with $\sum_{i=1}^{s} w_i = m$. For such $\v w$ we have
\[
P_{m,\v w}(\v z) = \prod_{i=1}^s [z_i - 1]_{w_i} = \v z^{\v w} + \text{ terms of total degree } < m.
\] 
Since the terms of smaller degree belong to $A$, we have $\v z^\v w \in A$. This finishes the proof of the step of induction.

(b) When $\v w$ runs over the indexing set from part (a), polynomials 
\[
Q_{\v w}(\v z) = \prod_{i=1}^s [z_i - 1]_{w_i} = \v z^\v w + \text{ terms of smaller total degree }
\]
form a basis in the $\Z_p$-module of all polynomails of total degree $\le m$. The transition matrix from the monomial basis to this basis has determinant~1, and therefore $\det(\v v^\v w) = \det(Q_\v w(\v v))$. Note that $Q_{\v w}(\v v)=0$ when $w_i \ge v_i$ for some $i$. Let $\v e=(1,\ldots,1)$. We then have $Q_{\v w}(\v v)=0$ when $\sum_{i=1}^s w_i > \sum_{i=1}^s (v_i - 1)$ or $\sum_{i=1}^s w_i = \sum_{i=1}^s (v_i - 1)$ but $\v w \ne \v v - \v e$. If we arrange both indexing sets so that the sum of coordinates is non-decreasing, then
\[
\det(Q_\v w(\v v)) = \pm \prod_{\v v} Q_{\v v-\v e}(\v v) = \pm \prod_{\v v} \prod_{i=1}^s (v_i-1)!.
\] 
In the set indexing columns the maximal value of $v_i-1$ is $m$, hence the above determinant is not divisible by $p$ when $p>m$.  
\end{proof}

\begin{proof}[Proof of Theorem \ref{HWC}]
Let $k\le n$. We use Proposition~\ref{det-HW-decomposition-at-0} to decompose
$\det(HW^{(k)})|_{t=0}$ into a product 
the Hasse--Witt determinants corresponding to $P(\tau^\circ)$ for proper faces $\tau\subset \Delta$
of all dimensions.
Let $\tau$ be a proper face. By Lemma~\ref{pyramid-F-HW-matrix-restriction} the Hasse--Witt matrix 
$H=HW^{(k)}(P(F^\circ)|_{t=0}$ can be computed from the coefficients of the polynomial 
$F^{(k)}_h(\v x)$ with $h=1-\sum_{\v w \in F_\Z} g_\v w \v x^\v w$. 
Since ${\rm vol}(F)=1$ and all $g_\v w \in \Z_p^\times$ we can apply 
Proposition~\ref{det-HW-pyramid-p-adic-order} and conclude that the $p$-adic order of $\det H$
equals $\sum_{\v u \in (k P(F^\circ))_\Z} (\deg \v u-1)$. It remains to note that
$\Delta \setminus \{\v 0\}$ is the disjoint union of sets $ P(F^\circ)$ for all proper faces $F$.
Multiplying by $k$ and taking sets of integral points, we find that $(k \Delta)_\Z \setminus \{\v 0\}$
is the disjoint union of sets $(kP( F^\circ))_\Z$. Hence
\[
\ord_p \det(HW^{(k)})|_{t=0} = \sum_{F \subsetneq \Delta} \sum_{\v u \in (k P(F^\circ))_\Z} (\deg \v u-1) = \sum_{\v u \in (k \Delta)_\Z \setminus \{\v 0\}} (\deg \v u-1) = L(k).
\]
It follows that $p^{-L(k)} \det(HW^{(k)}) \in \Z_p\lb t \rb^\times$. Since $k < p$ is arbitrary,
our claim follows.
\end{proof}

\begin{remark} The fact that $f(\v x)=1-t g(\v x)$ is not essential in Theorem~\ref{HWC}. With little modifications, a similar claim can be proved for any admissible $f(\v x) = \sum_\v u f_\v u \v x^\v u$ such that $f_\v 0|_{t=0} \in \Z_p^\times$ and $(f_\v u/t) |_{t=0} \in \Z_p^\times$ for all $\v u \ne 0$.
\end{remark}

\section{Construction of the Frobenius structure}\label{firstproofs}

Now we turn to the proof of Proposition~\ref{cartier2frobenius}. Let us recall our basic setup that we 
introduced starting on page~\pageref{setup}. We have a Laurent polynomial $f(\v x)=1-t g(\v x)$ with $g \in \Z[x_1^{\pm 1},\ldots,x_n^{\pm 1}]$ such that its Newton polytope $\Delta \subset \R^n$ is reflexive. We start with the ring $R=\Z[t,1/D_f(t)]$ where $D_f(t)\in\Z[t]$ is some polynomial with $D_f(0)\ne0$. The $R$-modules of admissible rational functions $\sO_f$ and $\sO_f^\circ$ consist of elements of the form $(k-1)! A(\v x)/f(\v x)^k$ where $k \ge 1$ and $A$ is an admissible polynomial with coefficients in $R$ and support in $k \Delta$ and $k \Delta^\circ$ repectively. Recall that $d\sO_f$ is a submodule generated by $\theta_i\omega$ for $\omega \in \sO_f$ and $i=1,\ldots,n$. We also have a finite group $\sG$ of monomial substitutions which preserve $f(\v x)$. 

\begin{definition}\label{cyclicMUM}
We shall say that $M = (\sO_f^\circ)^\sG /d \sO_f$ is a cyclic $\theta$-module
of rank $n$ if 
\begin{itemize}
\item[(i)]
for every $m\ge0$ and for
every $\sG$-invariant admissible Laurent polynomial $A(\v x)$
supported in $m\Delta$ we have
\[
m!\frac{A(\v x)}{f(\v x)^{m+1}}\is\sum_{j=0}^{\min(m,n-1)}b_j(t)\theta^j(1/f)
\quad \mod{d \sO_{f}} 
\]
with $b_j(t)\in\Z[t,1/D_f(t)]$ for all $j$. 
\item[(ii)]
The monic order $n$ differential operator $L$ such that $L(1/f)\in d\sO_f$ is irreducible in 
$\Q(t)[\theta]$.
\end{itemize}

Note that existence of $L$ in~(ii) follows from~(i). We call $L$ the Picard-Fuchs differential operator associated to $M$. Write $L=\theta^n + \sum_{j=0}^{n-1}a_j(t)\theta^j$ with $a_j(t)\in\Z[t,1/D_f(t)]$
for all $j$. 
We shall say that $M$ is of MUM-type (maximally unipotent local monodromy) if $a_j(0)=0$
for all $j<n$.
\end{definition}

For any prime $p$ not dividing $\#\sG\times D_f(0)$ and $p\ge n$ we embed $\Z[t,1/D_f(t)]$ in $\Z_p\lb t \rb$ and fix a Frobenius lift $\sigma$ on $\Z_p\lb t \rb$. We assume it is given by $t^\sigma = t^p(1 + p \, t \, u(t))$ for some $u \in \Z_p\lb t \rb$.

\begin{proposition}\label{Lambda-entries-p-divisibility} 
Let $M = (\sO^\circ_f)^\sG/d \sO_f$ be a cyclic $\theta$-module of rank $n$. Suppose that $p\ge n$.
Then there exist $\lambda_i(t)\in p^i\Z_p\lb t\rb$ such that
\be\label{cartierformula}
\cartier(1/f)\is \sum_{i=0}^{n-1}\lambda_i(t)(\theta^i(1/f))^\sigma
\quad \mod{d\hat\sO_{f^\sigma}}.
\ee
\end{proposition}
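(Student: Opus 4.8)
The plan is to show that $\cartier(1/f)$ has an expansion in the cyclic generators $(\theta^i(1/f))^\sigma$ with coefficients in $p^i\Z_p\lb t\rb$. First I would use the fact, recalled just before the proposition, that $\cartier$ maps $\hat\sO_f^\circ$ into $\hat\sO_{f^\sigma}^\circ$ and commutes up to a factor of $p$ with each $\theta_i$, hence descends to a $\Z_p\lb t\rb$-linear map on the quotient $\theta$-modules. Since $M^\sigma := (\hat\sO_{f^\sigma}^\circ)^\sG/d\hat\sO_{f^\sigma}$ is (the $p$-adic completion of) a cyclic $\theta$-module of rank $n$ generated by the $(\theta^i(1/f))^\sigma$ for $i=0,\ldots,n-1$, there certainly exist $\lambda_i(t)\in\Z_p\lb t\rb$ with $\cartier(1/f)\is\sum_{i=0}^{n-1}\lambda_i(t)(\theta^i(1/f))^\sigma\bmod d\hat\sO_{f^\sigma}$. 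The real content is the divisibility $\lambda_i\in p^i\Z_p\lb t\rb$.

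For the divisibility I would iterate the commutation relation. Applying $\theta$ to the expansion and using $\cartier\circ\theta = p\,\theta\circ\cartier$ on $\hat\sO_f$, one gets
\[
p\,\theta^{j}\cartier(1/f) = \cartier(\theta^{j}(1/f)) \is \sum_{i} \bigl(\text{stuff}\bigr)(\theta^i(1/f))^\sigma \pmod{d\hat\sO_{f^\sigma}},
\]
wait — more precisely $\cartier(\theta(1/f)) = \tfrac1p\,\theta^\sigma\!\left(\sum_i \lambda_i (\theta^i(1/f))^\sigma\right)$ expressed back in the basis via the Picard-Fuchs relation $L(1/f)\in d\sO_f$. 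The cleanest route: consider the $\Z_p\lb t\rb$-span $N = \sum_{i=0}^{n-1}\Z_p\lb t\rb\,(\theta^i(1/f))^\sigma$ inside $M^\sigma$, and the sublattices $N_k = \sum_{i} p^{\max(i-k,0)}\Z_p\lb t\rb\,(\theta^i(1/f))^\sigma$. The statement $\lambda_i\in p^i\Z_p\lb t\rb$ is exactly $\cartier(1/f)\in N_0$... I will instead argue via the $k$-th formal derivates $\fil_k$ from \cite[\S3]{BeVlIII}: by \cite[Proposition 2.7]{BeVlIII} and the Hasse–Witt machinery one knows $\cartier^s$ raises $p$-divisibility, and the filtration $\fil_k$ on $\hat\sO_f$ is compatible with the degree filtration on $M$. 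Concretely, $1/f\in\sO_f^\circ$ has degree $1$ in the sense that $\theta^i(1/f)$ spans the $i$-step of the Hodge-type filtration; one shows $\cartier(1/f)\in p^i\cdot(\text{$i$-step})$ modulo the higher steps by downward induction on $i$ from $n-1$ to $0$, using that $\cartier$ shifts the filtration while introducing the extra factor $p$ from $\cartier\theta_j = p\theta_j\cartier$.

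Here is the induction I would actually write. By admissibility and~\eqref{Cartier-on-expansions}, $\cartier$ applied to $t^{\deg\v u}\v x^\v u/f^k$ lands in $\hat\sO_{f^\sigma}$; combining with \cite[Proposition 5.7]{BeVlIII} (the $p^{L(k)}$-divisibility used in Theorem~\ref{HWC}) one gets that $\cartier(1/f)$ lies in $p^{n-1}$ times the top piece plus lower-order corrections, and descending step by step the coefficient of $(\theta^i(1/f))^\sigma$ acquires at least $p^i$. More directly: expand $1/f = \sum_{k\ge0} t^k g(\v x)^k$, so $\cartier(1/f) = \sum_k t^{\sigma}{}^{?}\ldots$ — rather, use $f^\sigma = 1 - t^\sigma g$ and the relation $\cartier(1/f)\cdot(f^\sigma) \is 1 \bmod p\hat\sO_{f^\sigma}$ together with the explicit Hasse–Witt formula~\eqref{HW-k-polynomial} modulo successive powers $p,p^2,\ldots,p^{n-1}$, each congruence pinning down $\lambda_i \bmod p^{i+1}$. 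The main obstacle, which I expect to be the technical heart, is justifying that the coefficients $\lambda_i$ are \emph{uniquely} well-defined in $\Z_p\lb t\rb$ and that the filtration-shift argument is not spoiled by the completion: this is precisely where one needs the $n$-th Hasse–Witt condition and the decomposition~\eqref{O-f-decomposition-k}, $\hat\sO_f\cong\sO_f(k)\oplus\fil_k$, to guarantee that $\theta^0(1/f),\ldots,\theta^{n-1}(1/f)$ remain independent modulo $d\hat\sO_f$, not merely modulo $d\sO_f$. Once that independence is in hand, the $p^i$-divisibility of $\lambda_i$ follows by matching the two sides of~\eqref{cartierformula} against the filtration $\fil_1\supset\fil_2\supset\cdots$ and using $\cartier(\fil_k)\subset p^k\hat\sO_{f^\sigma}$, which forces the stated congruences.
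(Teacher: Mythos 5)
There is a genuine gap: the divisibility $\lambda_i\in p^i\Z_p\lb t\rb$, which you yourself identify as ``the real content'', is never actually derived. Your sketch oscillates between iterating $\cartier\circ\theta=p\,\theta\circ\cartier$, a ``Hodge-type filtration'' on the span of the $\theta^i(1/f)$, and the formal derivates $\fil_k$, but none of these is carried to a conclusion, and the $\fil_k$-route as stated does not apply: the inclusion $\cartier(\fil_k)\subset p^k\hat\sO_{f^\sigma}$ is essentially the definition of $\fil_k$, and $1/f$ does not lie in $\fil_k$ for $k\ge1$, so it gives no information about $\cartier(1/f)$. The mechanism the paper uses is different and concrete: by \cite[(3)]{BeVlIII} one has the $p$-adically convergent expansion $\cartier(1/f)=\sum_{k\ge0}\frac{p^k}{k!}\cdot k!\,Q_k(\v x)/(f^\sigma)^{k+1}$ with $Q_k=\cartier(G^kf^{p-1})$, $G=\frac1p(f^\sigma(\v x^p)-f(\v x)^p)$, and the $Q_k$ admissible with respect to $t^\sigma$. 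Each term is then reduced modulo $d\sO_{f^\sigma}$ using Definition \ref{cyclicMUM}(i) (with $f$ replaced by $f^\sigma$), and the crucial structural point of that definition is that a term with denominator $(f^\sigma)^{k+1}$ reduces to a combination of $(\theta^j(1/f))^\sigma$ with $j\le\min(k,n-1)$ only. Hence $\lambda_i$ receives contributions only from terms with $k\ge i$, each carrying the prefactor $p^k/k!$, and $\ord_p(p^k/k!)\ge i$ for $k\ge i$ when $p\ge n$. This also settles existence, since the termwise reductions converge $p$-adically; your appeal to cyclicity of the completed module $(\hat\sO_{f^\sigma}^\circ)^\sG/d\hat\sO_{f^\sigma}$ is exactly what needs such an argument.

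A second, smaller error: you invoke the $n$-th Hasse--Witt condition and the decomposition $\hat\sO_f\cong\sO_f(k)\oplus\fil_k$ to get uniqueness of the $\lambda_i$, i.e.\ independence of the $\theta^i(1/f)$ modulo $d\hat\sO_f$. That condition is not a hypothesis of this proposition and is not needed here: the statement only asserts \emph{existence} of some $\lambda_i\in p^i\Z_p\lb t\rb$ satisfying \eqref{cartierformula}. Independence modulo $d\hat\sO_{f^\sigma}$ (Lemma \ref{independence-mod-hat-Of} and Remark \ref{independence-mod-hat-Of-sigma}) enters only later, in the proof of Proposition \ref{cartier2frobenius}, and it is precisely there that the Hasse--Witt assumption is imposed.
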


\begin{proof} From specialization of \cite[(3)]{BeVlIII} to $m=1,A=1$ we get the following explicit formula
\be\label{f-inverse-expansion}
\cartier(1/f)=\sum_{k\ge 0}\frac{p^k}{k!}\times k!\frac{Q_k(\v x)}{(f^\sigma)^{k+1}},
\ee 
with $Q_k(\v x)=\cartier(G(\v x)^kf(\v x)^{p-1})$ and 
\[
G(\v x)=\frac{1}{p}(f^\sigma(\v x^p)-f(\v x)^p).
\]
The discussion preceding \cite[(3)]{BeVlIII} assures us that the Laurent polynomials $Q_k(\v x)$ are
admissible with respect to $t^\sigma$.  So we can carry out Dwork-Griffiths reduction modulo
$d\sO_{f^\sigma}$ using Definition~\ref{cyclicMUM}(i) with $f$ replaced by $f^\sigma$.
The terms $k!Q_k(\v x)/(f^\sigma)^{k+1}$ 
are equivalent modulo $d\sO_{f^\sigma}$ to a $\Z_p\lb t\rb$-linear combination of
$(\theta^i(1/f))^\sigma,i=0,1,\ldots,n-1$. 
Thus the existence of the $\lambda_i(t)\in \Z_p\lb t \rb$ follows. 

For any $0\le i<n$ the contributions to $\lambda_i(t)$ come from terms in \eqref{f-inverse-expansion} with 
$k\ge i$. Since $p\ge n$ we have that $\ord_p(p^k/k!)\ge i$ if $k\ge i$. 
This implies that $\lambda_i(t)$ is divisible by $p^i$ for all $0\le i<n$.
\end{proof}

\begin{remark} Consider the $p$-adic completion $R=\Z[t,1/D_f(t)]\,\hat \; \subset \Z_p\lb t \rb$. Though we will not use this fact, let us remark that if $t^\sigma \in R$ then $\lambda_i(t)$ constructed in the above proof  also belong to $R$. 
\end{remark}

\begin{remark}
In the proof of Proposition \ref{cartier2frobenius}
we need independence of $\theta^i(1/f)$, $i=0,\ldots,n-1$ modulo $d\hat\sO_f$ over $\Z_p\lb t\rb$.
It should follow from the irreducibility in $\Q(t)[\theta]$ of the Picard-Fuchs operator.
Unfortunately we have seen no easy way do deduce this. Instead we need to invoke the $n$-th
Hasse--Witt assumption and the following lemmas. 
\end{remark}

\begin{lemma}\label{independence-over-Qt}
Let $(\sO^\circ_f)^\sG/d \sO_f$ be a cyclic $\theta$-module of rank $n$.
Then the elements $\theta^i(1/f), i=0,1,\ldots,n-1$ are linearly independent modulo
$d\sO_f$ over $\Q(t)$.
\end{lemma}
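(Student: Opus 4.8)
The plan is to work over the generic point. Set $N := \big((\sO_f^\circ)^\sG/d\sO_f\big)\otimes_R\Q(t)$; the derivation $\theta$ descends to $N$ and obeys the Leibniz rule, so $N$ is a left module over the non-commutative Euclidean domain $\Q(t)[\theta]$ (in which every left ideal is principal). By Definition~\ref{cyclicMUM}(i), modulo $d\sO_f$ every element of $(\sO_f^\circ)^\sG$ is an $R$-linear combination of $\theta^i(1/f)$, $0\le i\le n-1$; hence $N$ is spanned over $\Q(t)$ by these $n$ vectors, so $\dim_{\Q(t)}N\le n$, and $N$ is the cyclic $\Q(t)[\theta]$-module generated by the class of $1/f$. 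The lemma is exactly the statement that $\dim_{\Q(t)}N=n$ with $1/f,\theta(1/f),\dots,\theta^{n-1}(1/f)$ a basis, so that is what I would prove.

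Because $\dim_{\Q(t)}N\le n<\infty$, the left ideal $I=\{P\in\Q(t)[\theta]\mid P\cdot\tfrac1f=0\ \text{in}\ N\}$ is nonzero, hence $I=\Q(t)[\theta]\,P_0$ for a unique monic $P_0$; the classical theory of cyclic differential modules (companion form) then gives $\dim_{\Q(t)}N=\ord(P_0)=:d$, with $1/f,\theta(1/f),\dots,\theta^{d-1}(1/f)$ a basis. It therefore suffices to prove $d=n$. Since $L(1/f)\in d\sO_f$ (here $L$ is the Picard--Fuchs operator of Definition~\ref{cyclicMUM}), the class of $1/f$ is annihilated by $L$ in $N$, i.e.\ $L\in I$, so $L=Q\,P_0$ with $Q\in\Q(t)[\theta]$ and $\ord(Q)+\ord(P_0)=n$. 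Now I invoke Definition~\ref{cyclicMUM}(ii): $L$ is irreducible in $\Q(t)[\theta]$, so one of $Q,P_0$ must be a unit, which forces $d\in\{0,n\}$.

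What remains is to rule out the degenerate value $d=0$, which would say $1/f\equiv0\bmod d\sO_f$ over $\Q(t)$, i.e.\ $c(t)/f\in d\sO_f$ for some nonzero $c(t)\in R$. Here irreducibility of $L$ is of no help, and one must use the period map of \cite[\S2]{BeVl20II}: it is $R$-linear, kills $d\sO_f$, and sends $1/f$ to the nonzero series $y_0(t)=1+O(t)$. Applying it to $c(t)/f$ yields $c(t)\,y_0(t)=0$ in $\Q\lb t\rb$, which is impossible since $\Q\lb t\rb$ is an integral domain and $c,y_0\ne0$. Hence $d=n$, and the elements $\theta^i(1/f)$, $0\le i\le n-1$, form a $\Q(t)$-basis of $N$ — in particular they are linearly independent modulo $d\sO_f$ over $\Q(t)$.

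I expect the only real obstacle to be this last step: everything else is the routine linear algebra of cyclic modules over $\Q(t)[\theta]$, but excluding $1/f\equiv0$ genuinely requires input beyond the two conditions of Definition~\ref{cyclicMUM} — namely the existence of a nonvanishing period — and one should take care that the period map is $R$-linear, so that clearing the denominator $c(t)$ is legitimate.
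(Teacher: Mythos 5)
Your proposal is correct and rests on the same two ingredients as the paper's own proof: irreducibility of $L$ in $\Q(t)[\theta]$ (exploited via right division) and the constant-term period map, which is $R$-linear, kills $d\sO_f$, and sends $1/f$ to $F_0(t)$ with $F_0(0)=1$. The only difference is packaging: the paper pushes a hypothetical relation through the ($\theta$-equivariant) period map and contradicts irreducibility via a right common divisor of $L$ and the resulting lower-order operator annihilating $F_0$, whereas you factor $L=Q\,P_0$ through the minimal annihilator $P_0$ of the class of $1/f$ in the localized module and use the period map only to rule out the degenerate case $P_0=1$.
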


\begin{proof} Rational functions $\omega = A(\v x)/f(\v x)^m \in \sO_f$ can be expanded as power series as follows: 
\[
A(\v x)/(1-t g(\v x))^m = A(\v x) \sum_{k=0}^\infty \binom{k+m-1}{m-1} t^m g(\v x)^m = \sum_{\v u \in \Z} c_\v u \v x^\v u
\]
with some coefficients $c_\v u \in \Z_p\lb t \rb$. Note that the constant term $c_\v 0=c_\v 0(\omega)$ vanishes when $\omega \in d\sO_f$. For this reason the map $c_\v 0: \sO_f \to \Z_p\lb t\rb$ was called a \emph{period map}
in \S 2 of Dwork crystals II. Note that this map is $\Z_p\lb t \rb$-linear and commutes with $\theta$. The constant term $c_\v 0(1/f)$ shall be denoted by $F_0(t)$, this is precisely the power series solution to the Picard-Fuchs operator $L$ normalized so that $F_0(0)=1$. Suppose that there exist $b_0,\ldots,b_{n-1} \in \Q(t)$ not all zero such that   
$\sum_{j=0}^{n-1} b_j \theta^j(1/f) \in d\sO_f$. Then $F_0(t)$ is also annihilated by the lower order operator $L'=\sum_{j=0}^{n-1} b_j\theta^j$. Using the right Euclidean algorithm in $\Q(t)[\theta]$ we then find a right common divisor $L''$ of $L$ and $L'$. This operator of smaller order also annihilates $F_0$. Hence $L'' \ne 0$, which contradicts  irreducibility of $L$ in $\Q(t)[\theta]$.
\end{proof}

\begin{lemma}\label{independence-over-powerseries}
Let $(\sO^\circ_f)^\sG/d \sO_f$ be a cyclic $\theta$-module of rank $n$.
Then the elements $\theta^i(1/f), i=0,1,\ldots,n-1$ are linearly independent modulo $d\sO_f$ 
over $\Z_p\lb t\rb$.
\end{lemma}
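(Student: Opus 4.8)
The plan is to upgrade the linear independence over $\Q(t)$ from Lemma~\ref{independence-over-Qt} to linear independence over $\Z_p\lb t\rb$, and the natural tool is the period map $c_{\v 0}\colon \sO_f\to\Z_p\lb t\rb$ recalled in the proof of Lemma~\ref{independence-over-Qt}. Suppose we have a nontrivial relation $\sum_{j=0}^{n-1} b_j(t)\,\theta^j(1/f)\in d\sO_f$ with $b_j\in\Z_p\lb t\rb$ not all zero. First I would apply the period map: since $c_{\v 0}$ is $\Z_p\lb t\rb$-linear, commutes with $\theta$, and kills $d\sO_f$, we get $\sum_{j=0}^{n-1} b_j(t)\,\theta^j F_0(t)=0$, i.e.\ $F_0$ is annihilated by the operator $L'=\sum_j b_j\theta^j\in\Z_p\lb t\rb[\theta]$ of order $<n$. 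But $F_0\in 1+t\Z_p\lb t\rb$ is also annihilated by the order-$n$ operator $L$, whose leading coefficient is a unit in $\Z_p\lb t\rb$.

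The key step is then a $p$-adic division argument over $\Z_p\lb t\rb[\theta]$. Since $L$ is monic in $\theta$, one can perform right division of $L'$ by $L$ inside $\Z_p\lb t\rb[\theta]$; as $\ord(L')<\ord(L)$, the quotient is zero and we would simply conclude $L'=0$, contradicting that the $b_j$ are not all zero — \emph{provided} the $b_j$ really lie in $\Z_p\lb t\rb$ rather than in its fraction field. The subtlety is that $c_{\v 0}$ only tells us $F_0$ is killed by $L'$; we still need to rule out that, after clearing denominators, a genuine relation exists. In fact the cleaner route is: we already know from Lemma~\ref{independence-over-Qt} that $\theta^i(1/f)$, $i=0,\dots,n-1$, are independent over $\Q(t)$, hence a fortiori over the field of fractions of $\Z_p\lb t\rb$, since $\Q(t)\subset\mathrm{Frac}(\Z_p\lb t\rb)$. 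Therefore no nontrivial relation with coefficients in $\Z_p\lb t\rb$ (or even in $\mathrm{Frac}(\Z_p\lb t\rb)$) can exist at all.

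So the actual content is just the inclusion $\Q(t)\hookrightarrow \mathrm{Frac}(\Z_p\lb t\rb)$ together with the observation that $d\sO_f$ is a $\Z_p\lb t\rb$-submodule, so a $\Z_p\lb t\rb$-linear relation modulo $d\sO_f$ becomes a $\mathrm{Frac}(\Z_p\lb t\rb)$-linear relation modulo $d\sO_f\otimes\mathrm{Frac}(\Z_p\lb t\rb)$, which by base change restricts to a $\Q(t)$-linear relation modulo $d\sO_{f,\Q(t)}$ after noting that $\sO_f$ over $\Q(t)$ and over $\mathrm{Frac}(\Z_p\lb t\rb)$ share the basis $\theta^i(1/f)$ by Definition~\ref{cyclicMUM}(i). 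I expect the main (minor) obstacle to be bookkeeping the compatibility of the two coefficient rings: one must check that reducing an element of $\sO_f$ modulo $d\sO_f$ via Dwork--Griffiths reduction gives the same coefficients whether one works over $\Z[t,1/D_f(t)]$, over $\Q(t)$, or over $\Z_p\lb t\rb$ — this is immediate from Definition~\ref{cyclicMUM}(i), since the reduction coefficients $b_j(t)$ there already lie in $\Z[t,1/D_f(t)]\subset \Q(t)\cap\Z_p\lb t\rb$. Once this is spelled out, the lemma follows with essentially no further work.
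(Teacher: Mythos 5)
Your overall strategy---descend from $\Z_p\lb t\rb$-coefficients to $\Q(t)$-coefficients and invoke Lemma~\ref{independence-over-Qt}---is the same as the paper's, but the one step that carries all the content is missing. The claim that independence over $\Q(t)$ holds ``a fortiori'' over $\mathrm{Frac}(\Z_p\lb t\rb)$ is not a fortiori at all: when the coefficient ring is enlarged, both the coefficients of a putative relation \emph{and} the submodule $d\sO_f$ one is allowed to reduce by grow, so new relations could in principle appear; transporting independence up a field extension is exactly what has to be proved. Your justification in the final paragraph is circular: Definition~\ref{cyclicMUM}(i) only says that the $\theta^i(1/f)$ \emph{span} the quotient (Dwork--Griffiths reduction), not that they form a basis, and that they remain independent after extending scalars is precisely the statement of the lemma, so you cannot appeal to ``the two rings share the basis $\theta^i(1/f)$''. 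Likewise ``by base change restricts to a $\Q(t)$-linear relation'' names the descent step without performing it. (Your first route is also broken as written: right division of $L'$ by $L$ with $\mathrm{ord}\,L'<\mathrm{ord}\,L$ simply returns remainder $L'$ and yields nothing; to exploit $L'F_0=LF_0=0$ one would need a GCRD argument together with irreducibility of $L$ in $\mathrm{Frac}(\Z_p\lb t\rb)[\theta]$, whereas only irreducibility in $\Q(t)[\theta]$ is assumed.)

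The paper fills this gap by a concrete linear-algebra descent: a relation $\sum_{j=0}^{n-1}c_j(t)\theta^j(1/f)\in d\sO_f$ means an identity $\sum_j c_j\,\theta^j(1/f)=\sum_{i=1}^n x_i\frac{\partial}{\partial x_i}\bigl(h_i(\v x)/f(\v x)^k\bigr)$ with the $h_i$ supported in $k\Delta$ for some fixed $k$. Clearing denominators and comparing coefficients of the finitely many monomials involved turns this into a finite system of linear equations with coefficients in $\Q(t)$, in the unknowns $c_j$ and the coefficients of the $h_i$. A solution over $\Z_p\lb t\rb$ with some $c_j\neq0$ forces the existence of a solution over $\Q(t)$ with some $c_j\neq0$ (if every $\Q(t)$-solution had all $c_j=0$, these vanishing conditions would be $\Q(t)$-linear consequences of the system and hence hold over any extension field), contradicting Lemma~\ref{independence-over-Qt}. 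If you insert this descent argument---or an equivalent linear-disjointness statement, which amounts to the same computation---your proposal becomes correct; as written, it has a genuine hole exactly where the difficulty lies.
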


\begin{proof}
Suppose we have $c_j(t)\in\Z_p\lb t\rb$ such that
\[
\sum_{j=0}^{n-1}c_j(t)\theta^j(1/f)\in d\sO_f.
\] 
Then there exists an integer $k$ and Laurent polynomials $h_i(\v x)$, $i=1,\ldots,n$ with
support in $k\Delta$ and coefficients in $\Z_p\lb t\rb$ such that
\[
\sum_{j=0}^{n-1}c_j(t)\theta^j(1/f)=\sum_{i=1}^nx_i\frac{\partial}{\partial x_i}
\frac{h_i(\v x)}{f(\v x)^k}.
\]
We can rewrite this equality as a system of linear equation in the $c_j$ and the coefficients
of the $h_i$ with coefficients in $\Q(t)$. The existence of a solution in $\Z_p\lb t\rb$
with at least one non-zero $c_j$ implies the existence of a similar solution in $\Q(t)$.
Hence the $\theta^i(1/f)$ would be linearly dependent over $\Q(t)$, contradicting
Lemma \ref{independence-over-Qt}. 
\end{proof}

\begin{remark}
In the same way one can show that the elements $\theta^i(1/f)^\sigma$, $i=0,\ldots,n-1$
are linearly independent modulo $d\sO_{f^\sigma}$ over $\Z_p\lb t\rb$.
\end{remark}

\begin{lemma}\label{independence-mod-hat-Of}
Let $(\sO^\circ_f)^\sG/d \sO_f$ be a cyclic $\theta$-module of rank $n$. Suppose also that
the $n$-th Hasse--Witt condition holds. Then the elements $\theta^i(1/f), i=0,1,\ldots,n-1$ 
are linearly independent modulo $d\hat\sO_f$ over $\Z_p\lb t\rb$.
\end{lemma}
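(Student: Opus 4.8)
The plan is to show that a linear dependence of $1/f,\theta(1/f),\dots,\theta^{n-1}(1/f)$ modulo $d\hat\sO_f$ is already a linear dependence modulo $d\sO_f$, which Lemma~\ref{independence-over-powerseries} forbids. So suppose $\eta:=\sum_{j=0}^{n-1}c_j(t)\,\theta^j(1/f)\in d\hat\sO_f$ with $c_j\in\Z_p\lb t\rb$; it is enough to conclude $\eta\in d\sO_f$. Since the $n$-th Hasse--Witt condition holds and $n<p$, the decomposition~\eqref{O-f-decomposition-k} applies: $\hat\sO_f=\sO_f(n)\oplus\fil_n$, with $\Z_p\lb t\rb$-linear projection $\pi\colon\hat\sO_f\to\sO_f(n)$. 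First I would observe that $\theta^j(1/f)\in\sO_f(n)$ for $0\le j\le n-1$: writing $\theta^j(1/f)=A_j(\v x)/f(\v x)^{j+1}$ with $A_j$ admissible and supported in $j\Delta$, multiplication of numerator and denominator by $f(\v x)^{n-1-j}$ presents $\theta^j(1/f)$ as an admissible rational function with denominator $f^n$ whose numerator is supported in $n\Delta$, i.e.\ as an element of $\sO_f(n)$. Hence $\pi(\eta)=\eta$.

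The key mechanical input is that $\theta_i$ stabilises $\fil_n$ for each $i$. Iterating $\cartier\circ\theta_i=p\,\theta_i\circ\cartier$ gives $\cartier^s(\theta_i\omega)=p^s\theta_i\cartier^s\omega$, so if $\omega\in\fil_n$ then $\cartier^s(\theta_i\omega)\in p^{(n+1)s}\hat\sO_{f^{\sigma^s}}\subseteq p^{ns}\hat\sO_{f^{\sigma^s}}$ for all $s\ge1$, whence $\theta_i\omega\in\fil_n$; in particular $\pi(\theta_i\omega)=0$ for $\omega\in\fil_n$. Therefore, decomposing an arbitrary $\omega\in\hat\sO_f$ as $\pi(\omega)+\omega'$ with $\omega'\in\fil_n$, one has $\pi(\theta_i\omega)=\pi(\theta_i\pi(\omega))$, and $\theta_i\pi(\omega)\in d\sO_f$ because $\pi(\omega)\in\sO_f(n)\subseteq\sO_f$. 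Writing the hypothesis as a finite sum $\eta=\sum_l a_l(t)\,\theta_{i_l}(\omega_l)$ with $a_l\in\Z_p\lb t\rb$, $\omega_l\in\hat\sO_f$, and putting $d_0:=\sum_l a_l(t)\,\theta_{i_l}(\pi(\omega_l))\in d\sO_f$, we get $\eta-d_0=\sum_l a_l\,\theta_{i_l}(\omega_l-\pi(\omega_l))\in\fil_n$, and then $\eta=\pi(\eta)=\pi(d_0)$. Thus
\[
\eta\in(d\sO_f+\fil_n)\cap\sO_f(n).
\]

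The concluding step, which is the real heart of the matter, is the identity $(d\sO_f+\fil_n)\cap\sO_f(n)=d\sO_f\cap\sO_f(n)$; granted this, $\eta\in d\sO_f$ and Lemma~\ref{independence-over-powerseries} forces $c_0=\dots=c_{n-1}=0$. Writing $\eta=d_0+g$ with $d_0\in d\sO_f$ and $g\in\fil_n$ as above, we have $g=\eta-d_0\in\sO_f\cap\fil_n$, so what must be shown is the inclusion
\[
\sO_f\cap\fil_n\subseteq d\sO_f,
\]
i.e.\ that the ``formal $n$-th derivatives'' of finite pole order are genuine exact forms, equivalently that the projection $\pi$ onto $\sO_f(n)$ realises Griffiths--Dwork reduction modulo $d\sO_f$. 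This is the expected consequence of the level-$n$ theory of \cite{BeVlIII}: under the $n$-th Hasse--Witt condition the de Rham cohomology of the family is represented by rational functions of pole order at most $n$, so the complementary summand $\fil_n$ of~\eqref{O-f-decomposition-k} is acyclic for the de Rham differential, in line with the cyclicity in Definition~\ref{cyclicMUM}(i).

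I expect this last inclusion $\sO_f\cap\fil_n\subseteq d\sO_f$ to be the only genuine obstacle; the remainder is bookkeeping with the decomposition~\eqref{O-f-decomposition-k}, the stability of $\fil_n$ under the $\theta_i$, and Lemma~\ref{independence-over-powerseries}. It is also the only point where the full $n$-th Hasse--Witt hypothesis is essential rather than just its first instance, since we need all $n$ of the generators $\theta^j(1/f)$ to sit simultaneously inside the finite-rank summand $\sO_f(n)$.
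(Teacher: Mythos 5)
Your overall strategy is the paper's: use the Hasse--Witt decomposition \eqref{O-f-decomposition-k} to convert a relation modulo $d\hat\sO_f$ into one modulo $d\sO_f$, then invoke Lemma~\ref{independence-over-powerseries}. The bookkeeping is fine up to the point where you obtain $\eta\in(d\sO_f+\fil_n)\cap\sO_f(n)$: the observation that $\theta^j(1/f)\in\sO_f(n)$ for $j\le n-1$, and that $\cartier\circ\theta_i=p\,\theta_i\circ\cartier$ makes $\theta_i$ raise the filtration level, are both correct and are exactly the mechanisms the paper uses. But your concluding step rests on the unproven inclusion $\sO_f\cap\fil_n\subseteq d\sO_f$, which you yourself flag as ``the only genuine obstacle'' and justify only by an appeal to what the level-$n$ theory of \cite{BeVlIII} is ``expected'' to give. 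That inclusion (formal $n$-th derivatives of finite pole order are exact) is not among the results quoted in this paper, is not a formal consequence of the direct sum decompositions, and is precisely the kind of statement the authors avoid having to prove. So as written the argument has a genuine gap at its crux.

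The gap is avoidable by a small change of where you apply the decomposition: instead of projecting the primitives $\omega_l$ at level $n$, decompose them at level $n-1$, i.e.\ write $\omega_l=\omega_l'+\delta_l$ with $\omega_l'\in\sO_f(n-1)$ and $\delta_l\in\fil_{n-1}$. Then $d_0:=\sum_l a_l\theta_{i_l}(\omega_l')$ lies in $d\sO_f$ \emph{and} has pole order at most $n$ with admissible numerator supported in $n\Delta$, hence $d_0\in\sO_f(n)$; while $\sum_l a_l\theta_{i_l}(\delta_l)\in\fil_n$ by the commutation $\cartier\circ\theta_i=p\,\theta_i\circ\cartier$ (each application of $\theta_i$ gains one power of $p$, upgrading $\fil_{n-1}$ to $\fil_n$). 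Therefore $\eta-d_0\in\sO_f(n)\cap\fil_n=\{0\}$ by the directness of the sum at level $n$, so $\eta=d_0\in d\sO_f$ outright, and Lemma~\ref{independence-over-powerseries} finishes. This is exactly the paper's proof: the point of using level $n-1$ for the primitives is that the ``finite'' part stays inside $\sO_f(n)$ after differentiation, so one never needs the problematic inclusion $\sO_f\cap\fil_n\subseteq d\sO_f$, only $\sO_f(n)\cap\fil_n=0$, which is part of the decomposition you already have.
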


\begin{proof}
Let $c_0, \ldots, c_{n-1} \in \Z_p\lb t \rb$ be such that 
$\sum_{j=0}^{n-1} c_j \theta^j(1/f) \in d\hat\sO_f$. Let us write this element as 
\[
\sum_{i=1}^{n} x_i \frac{\partial}{\partial x_i}\nu_i
\]
for some $\nu_i \in \hat\sO_f$. Using the decomposition~\eqref{O-f-decomposition-k} for $k=n-1$ we write 
$\nu_i = \omega_i + \delta_i$ with $\omega_i \in \sO_f(n-1)$ and $\delta_i \in \fil_{n-1}$ for each $i$.
It follows that 
\[
\sum_{j=0}^{n-1} c_j \theta^j(1/f) - \sum_{i=1}^{n} x_i \frac{\partial}{\partial x_i}
\omega_i \in \sO(n) \cap \fil_n.
\]
As we have the decomposition~\eqref{O-f-decomposition-k} for $k=n$ it follows that this element is $0$.
Hence $\sum_{j=0}^{n-1} c_j \theta^j(1/f) \in d\sO_f$.
Lemma \ref{independence-over-powerseries} then implies that $c_j(t)=0$ for all $j$.
\end{proof}

\begin{remark}\label{independence-mod-hat-Of-sigma}
In the same way, using decompositions~~\eqref{O-f-decomposition-k} in $\hat\sO_{f^\sigma}$,
one can show that the elements $\theta^i(1/f)^\sigma$, $i=0,\ldots,n-1$  are linearly independent
modulo  $d\hat\sO_{f^\sigma}$ over $\Z_p\lb t\rb$. 
\end{remark}

\begin{proof}[Proof of Proposition \ref{cartier2frobenius}]
Let $L$ be the Picard-Fuchs operator associated to $M$. Let $L^\sigma$ be the
operator obtained from $L$ by applying $\sigma$ to its coeffcients and replacing
$\theta$ by $\theta^\sigma=\frac{t^\sigma}{\theta(t^\sigma)}\theta$. We then normalize
so that the coefficient of $\theta^n$ becomes $1$. Clearly the operator $L^\sigma$
annihilates $y_i^\sigma$ for $i=0,\ldots,n-1$ and we have $L^\sigma(1/f^\sigma)\in
d\sO_{f^\sigma}$. 

Consider (\ref{cartierformula}) and replace $(\theta^i(1/f))^\sigma=(\theta^\sigma)^i
(1/f^\sigma)$ by $\left(\frac{t^\sigma}{\theta t^\sigma}\theta\right)^i(1/f^\sigma)$.
Note that 
$\frac{\theta t^\sigma}{t^\sigma}=p(1+u(t))$ for some $u(t)\in t\Z_p\lb t\rb$.
Since $\lambda_j(t)$ is divisible by $p^j$ for all $j$ we can rewrite
(\ref{cartierformula}) as
\be\label{cartier-to-A}
\cartier(1/f)\is\mathcal{A}(1/f^\sigma)\quad \mod{d\hat\sO_{f^\sigma}},
\ee
where
\[
\sA=A_0(t)+A_1(t)\theta+\cdots+A_{n-1}(t)\theta^{n-1}\in\Z_p\lb t\rb[\theta].
\]
We have $A_i(0)=p^{-i}\lambda_i(0)$ for $0 \le i < n$ because
$\left(p\frac{t^\sigma}{\theta t^\sigma}\theta\right)^i \is \theta^i$ modulo 
$t \Z_p\lb t \rb[\theta]$. Apply $L$ to~\eqref{cartier-to-A} from the left and observe that,
by the commutation $\cartier\circ\theta=\theta\circ\cartier$, we have
\[
L(\cartier(1/f))=\cartier(L(1/f))\subset\cartier(d\hat\sO_f)
\subset d\hat\sO_{f^\sigma}.
\]
Hence $L\circ\mathcal{A}(1/f^\sigma)\in d\hat\sO_{f^\sigma}$. Using the right 
Euclidean algorithm in $\Z_p\lb t\rb[\theta]$ we can find differential operators $\mathcal{B}$ and
$\mathcal{N}$ of order less than $n$ such that $L\circ\sA=\mathcal{B}\circ L^\sigma
+\mathcal{N}$. Since $L^\sigma(1/f^\sigma) \in d \sO_{f^\sigma}$ and $\mathcal{B}$ sends 
$d \sO_{f^\sigma}$ to itself, it follows that 
$\mathcal{N}(1/f^\sigma)\in d\hat\sO_{f^\sigma}$.
Using Remark~\ref{independence-mod-hat-Of-sigma} we conclude that
$\mathcal{N}=0$. Hence $L^\sigma$ is a right divisor of $L\circ\sA$. In
particular this implies that for any solution $y(t)$ of $L(y)=0$ the
composition $\mathcal{A}(y^\sigma)$ lies in the kernel of $L$. 

Finally we must show that $A_0(0)=1$. For this we will use the \emph{period map}
$c_\v 0: \hat\sO_{f^\sigma} \to \Z_p\lb t \rb$ which was defined in the proof of 
Lemma~\ref{independence-over-Qt} by taking the constant term of the $t$-adically convergent formal expansion. This map is $\Z_p\lb t \rb$-linear, commutes with $\theta$ and vanishes on $d \hat\sO_{f^\sigma}$.
Applying the period map 
$c_\v 0$ to the identity $\cartier(1/f)\is\sA(1/f^\sigma)
\mod{d\hat\sO_{f^\sigma}}$ we get $F_0(t)=\sA(F_0^\sigma)$, where $F_0(t)=c_\v 0(1/f)$ is a power series
with $F_0(0)=1$. Setting $t=0$ on both sides gives us
$1=A_0(0)$, as desired.
\end{proof}

\section{Simplicial family}\label{sec:simplicial}
Let 
\[
f(\v x)=1-t\left(x_1+\cdots+x_n+\frac{1}{x_1\cdots x_n}\right).
\]
\begin{proposition}\label{MUMsimplicial}
Let $D_f(t)=(n+1)(1-((n+1)t)^{n+1})$. Then the $\Z[t,1/D_f(t)]$-module $\sO_f^\circ/d\sO_f$
is a cyclic $\theta$-module of MUM-type. The Picard-Fuchs operator reads
\[
L=\theta^n-((n+1)t)^{n+1}(\theta+1)\cdots(\theta+n).
\]
\end{proposition}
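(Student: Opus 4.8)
The plan is to run the Dwork--Griffiths reduction explicitly for this family. Its Newton polytope $\Delta$ is the reflexive simplex with vertices $\v e_1,\dots,\v e_n$ and $\v v_0:=-(1,\dots,1)$, which satisfy the single relation $\v v_0+\v e_1+\dots+\v e_n=\v 0$; since $\Delta$ is a simplex no symmetrisation is needed, so $\sG$ is trivial. Put $w:=\v x^{\v v_0}=(x_1\cdots x_n)^{-1}$, so that $g=x_1+\dots+x_n+w$ and $\theta_ig=x_i-w$ for $i=1,\dots,n$. Everything rests on the identity $\theta_i(B/f^k)=\theta_i(B)/f^k+kt(\theta_ig)B/f^{k+1}$, which modulo $d\sO_f$ reads
\[
k\,t\,(x_i-w)\,\frac{B}{f^{k+1}}\;\equiv\;-\,\theta_i(B)\,\frac{1}{f^{k}}\qquad(i=1,\dots,n),
\]
for every $B$ with $B/f^k\in\sO_f$.

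The key input is the Jacobian ideal $J=(x_1-w,\dots,x_n-w)\subset\Z[x_1^{\pm1},\dots,x_n^{\pm1}]$. Modulo $J$ one has $x_i\equiv w$, hence $x_1\cdots x_n\equiv w^n$ and $w\equiv w^{-n}$, so $w^{n+1}\equiv 1$ and $g\equiv(n+1)w$; thus $\Z[x_1^{\pm1},\dots,x_n^{\pm1}]/J\cong\Z[w]/(w^{n+1}-1)$ is free of rank $n+1$ with basis $1,w,\dots,w^{n}$, and any Laurent monomial reduces modulo $J$ to an integral combination of these. Feeding this into the elementary identities $(tg)^j/f^{k+1}=\sum_i(-1)^i\binom ji/f^{k+1-i}$ and using $g^j\equiv(n+1)^jw^j\pmod J$, one gets modulo $d\sO_f$ a relation expressing $t^jw^j/f^{k+1}$ as $(n+1)^{-j}/f^{k+1}$ up to pole order $\le k$; taking $j=n+1$ together with $w^{n+1}\equiv1\pmod J$ then yields, for $k\ge n$, a congruence of the shape $\bigl(((n+1)t)^{n+1}-1\bigr)/f^{k+1}\equiv(\text{pole order}\le k)\pmod{d\sO_f}$ (the possible `constant' numerator $f^0=1$ arising at $k=n$ is harmless since $1=f/f\in\sO_f$). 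Because $n+1$ and $1-((n+1)t)^{n+1}$ are units of $R=\Z[t,1/D_f(t)]$ --- one has $\tfrac1{n+1}=(1-((n+1)t)^{n+1})/D_f$ and $(1-((n+1)t)^{n+1})^{-1}=(n+1)/D_f$ --- this lets us lower the pole order of $1/f^{k+1}$ for all $k\ge n$, and hence reduce any admissible numerator supported in $m\Delta$ to an $R$-combination of $1/f,\dots,1/f^{n}$; the factorial $m!$ in $m!A/f^{m+1}$ precisely cancels the $1/k$ denominators introduced by the displayed Dwork--Griffiths identity. This gives condition~(i) of Definition~\ref{cyclicMUM}, with $M=\sO_f^\circ/d\sO_f$ generated over $R$ by $1/f,\theta(1/f),\dots,\theta^{n-1}(1/f)$ (which span the same $R$-module as $1/f,\dots,1/f^{n}$).

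It remains to identify $L$ and verify irreducibility. The period $F_0(t)=c_{\v 0}(1/f)$ is the generating function of the constant terms of $g^m$; the constant-term condition forces $(n+1)\mid m$ and gives $F_0(t)=\sum_{b\ge0}\frac{((n+1)b)!}{(b!)^{n+1}}t^{(n+1)b}$. A direct check (substitute $z=t^{n+1}$, so $\theta=(n+1)\theta_z$) shows $F_0$ is annihilated by $\theta^n-((n+1)t)^{n+1}(\theta+1)\cdots(\theta+n)$, whose leading coefficient $1-((n+1)t)^{n+1}$ is a unit of $R$ equal to $1$ at $t=0$ and all of whose lower coefficients lie in $t^{n+1}\Z[t]$; hence the associated monic operator is of MUM-type. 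The monic order-$n$ operator $L$ with $L(1/f)\in d\sO_f$ supplied by the reduction is unique and must annihilate $F_0$; since $F_0$ is the hypergeometric function ${}_{n}F_{n-1}$ with parameters $\tfrac1{n+1},\dots,\tfrac n{n+1}$ against $1,\dots,1$, it satisfies no linear differential equation of order $<n$, so $L$ must (up to the unit $1-((n+1)t)^{n+1}$) be the operator in the statement. The same description gives condition~(ii): none of the parameter differences $\tfrac j{n+1}$ ($1\le j\le n$) is an integer, so by the classical theory of hypergeometric operators $L$ is irreducible in $\Q(t)[\theta]$; together with condition (i) this also forces the $\theta^j(1/f)$, $j<n$, to be independent, so $M$ has rank exactly $n$.

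I expect the main obstacle to be the bookkeeping of the second step: one must check that rewriting an admissible monomial modulo $J$ and modulo $w^{n+1}\equiv1$ is compatible with the piecewise-linear degree function $\deg\v u=\sum_iu_i-(n+1)\min(0,u_1,\dots,u_n)$ of this simplex, so that admissibility and the support conditions in $k\Delta^\circ$ are preserved at each step, and that the only denominators which can appear are the units $n+1$ and $1-((n+1)t)^{n+1}$ of $R$, the factorials neutralising the Dwork--Griffiths divisions; the near-degenerate pole order $k=n$ (where the `constant' numerator shows up and where the operator $L$ is produced) needs slightly separate attention. Once this reduction scheme is in place, the computation of $L$ from $F_0$ and its irreducibility are standard.
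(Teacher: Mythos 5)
Your Jacobian-ideal scheme is plausible and mechanically close to what the paper does, but you have deferred precisely the substance of condition (i) of Definition \ref{cyclicMUM}. To make the reduction work you must exhibit, for each admissible numerator $A$, an explicit decomposition $A\equiv A_0+\sum_i(x_i-w)C_i$ in which the $C_i$ are again of controlled degree, so that after each Griffiths--Dwork step the new numerators (involving $\theta_i(C_i)$ and the leftover powers of $t$) are still admissible and supported in the right dilate of $\Delta$, and so that the only denominators ever produced are powers of $n+1$ and, at the very last stage, of $1-((n+1)t)^{n+1}$. You must also dispose of the pole-order-one leftovers with nonconstant numerator which your scheme generates but which the displayed identity cannot reduce (it needs pole order at least two); they have to be pushed back up to pole order two first. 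This bookkeeping is not a side issue you can flag as ``the main obstacle'': it is essentially the entire content of the paper's proof, which organizes the reduction so that the numerator is only ever multiplied by a single $t\,x_i$ at a time (formulas \eqref{reduction-x-0} and \eqref{reduction-x-i}), making admissibility automatic and showing that only $n+1$ is inverted before the final Picard--Fuchs relation; the paper also obtains $L$ directly from the identity $x_0x_1\cdots x_n=1$ rather than from your comparison of two reductions of $(tg)^{n+1}$. So as written, part (i) is a plan rather than a proof, though the plan looks workable.

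Part (ii) contains a genuine logical gap. The Beukers--Heckman criterion \cite{BH89} gives irreducibility of the hypergeometric operator in the variable $z=((n+1)t)^{n+1}$; the operator $L$ is its pullback along a degree-$(n+1)$ covering, and irreducibility is not in general preserved by such pullbacks (the local monodromy at $0$ gets replaced by its $(n+1)$-st power, which can become reducible together with the other generators). The paper closes exactly this gap by a monodromy argument: the monodromy group of $L(y)=0$ contains $U^{n+1}$ and the pseudo-reflection $S$, and since $U$ is maximally unipotent, any subspace invariant under $U^{n+1}$ is invariant under $U$, so irreducibility descends to the pullback. Your appeal to ``classical theory of hypergeometric operators'' does not apply to $L$ itself. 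The same issue affects your identification of $L$: you use that $F_0$ satisfies no equation of order $<n$ over $\Q(t)$, whereas irreducibility of the hypergeometric operator only gives this over $\Q(z)$; this can be repaired by averaging over $t\mapsto\zeta t$ with $\zeta^{n+1}=1$, but note that even then, minimality of the annihilator of the single solution $F_0$ does not imply irreducibility of $L$ (a reducible operator can have a solution whose minimal operator has full order), so condition (ii) genuinely requires the monodromy argument or an equivalent.
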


Note that if we take the constant term (in $x_1,\ldots,x_n$) in each summand 
of the series expansion $1/f(\v x)=\sum_{k\ge0}t^kg(\v x)^k$ we get in the simplicial case the power series
\[
\sum_{k\ge0}\frac{((n+1)k)!}{(k!)^{n+1}},
\] 
which is the power series solution of our Picard-Fuchs equation $L(y)=0$.

Notice that in the following proof we do not have to invoke the symmetry group of $g(\v x)$, which was denoted in the Introduction by $\sG \cong S_{n+1}$. Proposition~\ref{MUMsimplicial} tells us that the module $\sO_f^\circ/d\sO_f$ is generated by $\sG$-symmetric functions, and therefore we can conclude that $(\sO_f^\circ)^\sG/d\sO_f \cong \sO_f^\circ/d\sO_f$.

\begin{proof}[Proof of Proposition \ref{MUMsimplicial}]
In order to make the symmetric background of $g(\v x)$
more manifest we introduce $x_0=1/(x_1\cdots x_n)$ and write any monomial $\v x^{\v u}$ in
the form $\v x^{\v w}=x_0^{w_0}x_1^{w_1}\cdots x_n^{w_n}$ with $w_0,\ldots,w_n\in\Z_{\ge0}$. 
Of course 
the exponent vector $(w_0,\ldots,w_n)$ is only determined up to a shift by integer multiples of $(1,\ldots,1)$.
Let us write $|\v w|=w_0+\cdots+w_n$. 
Then $\deg(\v x^{\v w})\le|\v w|$ and we have equality if and only if $\min_iw_i=0$.
We also rewrite $f(\v x)$ as $1-t(x_0+\cdots+x_n)$. Observe that for $i=1,\ldots,n$,
\be\label{i-derivative}
\theta_i\frac{\v x^{\v w}}{f(\v x)^m}=(w_i-w_0)\frac{\v x^{\v w}}{f(\v x)^m}+
m(x_i-x_0)\frac{\v x^{\v w}}{f(\v x)^{m+1}}.
\ee
Take the sum over $i=1,\ldots,n$ and multiply by $(m-1)!t^{|\v w|}$. 
\[
0\is (|\v w|-(n+1)w_0)(m-1)!\frac{t^{|\v w|}\v x^{\v w}}{f(\v x)^m}+t(x_0+\cdots+x_n-(n+1)x_0)m!
\frac{t^{|\v w|}\v x^{\v w}}{f(\v x)^{m+1}}\mod{d\sO_f}.
\]
Now replace $t(x_0+\cdots+x_n)$ by $1-f(\v x)$ and we get, after rearranging terms,
\be\label{reduction-x-0}
m!\frac{t^{|\v w|+1}\v x^{\v w}x_0}{f(\v x)^{m+1}}\is m!\frac{1}{n+1}\frac{t^{|\v w|}\v x^{\v w}}{f(\v x)^{m+1}}
+(m-1)!\left(\frac{|\v w|-m}{n+1}-w_0\right)\frac{t^{|\v w|}\v x^{\v w}}{f(\v x)^m}\mod{d\sO_f}.
\ee
Combining this with \eqref{i-derivative} we find
\be\label{reduction-x-i}
m!\frac{t^{|\v w|+1}\v x^{\v w}x_i}{f(\v x)^{m+1}}\is m!\frac{1}{n+1}\frac{t^{|\v w|}\v x^{\v w}}{f(\v x)^{m+1}}
+(m-1)!\left(\frac{|\v w|-m}{n+1}-w_i\right)\frac{t^{|\v w|}\v x^{\v w}}{f(\v x)^m}\mod{d\sO_f}
\ee
for $i=0,1,\ldots,n$. Using these reduction formulae one easily sees by induction that any term
$m!\frac{t^{\deg(\v u)}  \v x^\v u}{f(\v x)^{m+1}}$ with $\deg(\v u) \le m$ is equivalent modulo
$d \sO_f$ to a $\Z[(n+1)^{-1}]$-linear combination of $\frac{k!}{f(\v x)^{k+1}}$ with $k = 0,\ldots,m$.

To finish the verification of part (i) of Definition \ref{cyclicMUM} we must write $\theta^n(1/f)$ as a
$\Z[t,1/D_f(t)]$-
linear combination of $\theta^i(1/f)$, $i=0,1,\ldots,n-1$. To that end we observe that $x_0x_1\cdots x_n=1$.
So 
\be\label{first-reduction}
n!\frac{t^{n+1}x_0\cdots x_n}{f(\v x)^{n+1}}=n!\frac{t^{n+1}}{f(\v x)^{n+1}}=t^{n+1}
(\theta+1)\cdots(\theta+n)\left(\frac{1}{f(\v x)}\right).
\ee
We now show that the left hand side equals $(n+1)^{-n-1}\theta^n(1/f(\v x))$ modulo $d\sO_f$.
Set $i=n,m=n,\v x^{\v w}=x_0x_1\cdots x_{n-1}$ in \eqref{reduction-x-i}. We
find
\be\label{second-reduction}
n!\frac{t^{n+1}x_0\cdots x_n}{f(\v x)^{n+1}}\is n!\frac{1}{n+1}\frac{t^{n}x_0\cdots x_{n-1}}{f(\v x)^{n+1}}
\mod{d\sO_f}.
\ee
Choose any $1\le m\le n$ and set $i=m-1,\v x^{\v w}=x_0\cdots x_{m-2}$ in \eqref{reduction-x-i}. If $m=1$
we take $\v x^{\v w}=1$. We get
\begin{eqnarray*}
m!\frac{t^{m}x_0\cdots x_{m-1}}{f(\v x)^{m+1}}&\is&
\frac{1}{n+1}\left(m!\frac{t^{m-1}x_0\cdots x_{m-2}}{f(\v x)^{m+1}}
-(m-1)!\frac{t^{m-1}x_0\cdots x_{m-2}}{f(\v x)^m}\right)\mod{d\sO_f}\\
&\is& \frac{1}{n+1}\theta\left((m-1)!\frac{t^{m-1}x_0\cdots x_{m-2}}{f(\v x)^m}\right)\mod{d\sO_f}.
\end{eqnarray*}
By recursive application for $m=n, n-1,\ldots,1$ and finally \eqref{second-reduction},\eqref{first-reduction}
we find that 
\[
(n+1)^{-n-1}\theta^n\left(\frac{1}{f(\v x)}\right)\is 
t^{n+1}(\theta+1)\cdots(\theta+n)\left(\frac{1}{f(\v x)}\right)\mod{d\sO_f},
\]
which verifies part (i) of Definition \ref{cyclicMUM}. It also establishes the MUM-property of $L$.

To verify part (ii) we remark that the hypergeometric equation with parameters $\frac{k}{n+1}$,
$k=1,\ldots,n$ and $1,1,\ldots,1$ has the property that the parameter sets are disjoint modulo $\Z$.
This is the well-known irreducibility criterion for hypergeometric differential equations,
see \cite{BH89}.
Our equation $L(y)=0$ arises from this hypergemeometric equation after the substitution 
$t\to((n+1)t)^{n+1}$. The monodromy of the hypergeometric equation is generated by a maximally
unipotent matrix $U$ coming from the monodromy at $0$ and a pseudo reflection $S$ from
the local monodromy at $1$. Then the monodromy group of $L(y)=0$ contains $U^{n+1}$ and $S$.
It is straightforward to see that if $U$ and $S$ generate a matrix group that acts irreducibly,
then the same holds for $U^{n+1}$ and $S$. Hence the monodromy group of $L(y)=0$ is irreducible,
which implies in particular that $L$ is irreducible in $\Q(t)[\theta]$. 
\end{proof}

\section{Hyperoctahedral family}\label{sec:hyperoctahedral}
Let 
\[
f(\v x)=1-t\left(x_1+\frac{1}{x_1}+\cdots+x_n+\frac{1}{x_n}\right)
\]
and let $\sG\cong S_n \times (\Z/2\Z)^n$ be the symmetry group generated by the permutations of
$x_1,\ldots,x_n$ and $x_i\to x_i^{\pm1}$.

\begin{proposition}\label{MUMhyperoctahedral}
There exists $D_f(t)\in n!\Z[t]$ with $D_f(0)=n!$ such that over the ring
$\Z[t,1/D_f(t)]$ the module $(\sO_f^\circ)^{\sG}/d\sO_f$
satisfies property (i) of Definition \ref{cyclicMUM}. There exists a monic operator $L$ of order $n$
with coefficients in this ring and of MUM-type such that $L(1/f) \in d\sO_f$.  
\end{proposition}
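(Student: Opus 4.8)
The plan is to mimic the proof of Proposition~\ref{MUMsimplicial}: assemble a supply of integration-by-parts identities in $\sO_f/d\sO_f$, combine them with the relation $t\,g(\v x)=1-f(\v x)$ to carry out a Dwork--Griffiths reduction of an arbitrary $\sG$-invariant admissible rational function to a $\Z[t,1/D_f(t)]$-combination of $1/f,1/f^2,\dots$, and finally produce one extra relation that lowers the pole order below $n+1$, which at the same time yields the operator $L$.

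The first step is to pass to the $\sG$-adapted quantities $z_i=x_i+x_i^{-1}$ and $w_i=x_i-x_i^{-1}$, which satisfy $\theta_i z_i=w_i$, $\theta_i w_i=z_i$ and $w_i^2=z_i^2-4$. A Laurent polynomial is $\sG$-invariant precisely when it is a symmetric polynomial in $z_1,\dots,z_n$; in particular $g=z_1+\cdots+z_n$, so $t(z_1+\cdots+z_n)=1-f$. Working with the $z_i$ makes the sign-flip part of $\sG$ invisible: each $z_i$ is already fixed by $x_i\mapsto x_i^{-1}$, so the reduction never asks us to symmetrise over $(\Z/2\Z)^n$, and the only divisions we incur are by integers $\le n$ (from symmetrising over the $S_n$-factor and from identities among symmetric functions), together with factorials absorbed by the $k!$-prefactors of admissible elements. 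This is exactly what makes $D_f(0)=n!$ attainable, rather than $D_f(0)=\#\sG=2^n n!$. The basic identities are, for a Laurent polynomial $B$ and $m\ge1$,
\[
\theta_i\!\left(\frac{B}{f^m}\right)=\frac{\theta_i B}{f^m}+m\,t\,(x_i-x_i^{-1})\,\frac{B}{f^{m+1}}\ \in\ d\sO_f,
\]
and, taking $B=w_iC$ with $C$ a Laurent polynomial not involving $x_i$,
\[
\theta_i\!\left(\frac{w_iC}{f^m}\right)=\frac{z_iC}{f^m}+m\,t\,(z_i^2-4)\,\frac{C}{f^{m+1}}\ \in\ d\sO_f.
\]

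Using the second identity (and, for higher powers of $z_i$, its iterates with $B=w_iz_i^{k-1}C$), one trades a factor $z_i^2$ in the numerator for terms of strictly smaller $z$-degree, some at the same pole order and some at pole order $m$; collecting the pole-order-$m$ contributions, symmetrising over $S_n$ and using $t(z_1+\cdots+z_n)=1-f$ brings the pole order back down. A double induction, descending on pole order and with an inner induction on the $z$-degree of the numerator, then shows that every $\sG$-invariant admissible element of $\sO_f^\circ$ of pole order $m+1$ is congruent modulo $d\sO_f$ to a $\Z[t,1/D_f(t)]$-combination of $1/f,\dots,1/f^{m+1}$, hence --- since $\theta^j(1/f)=j!/f^{j+1}+(\text{lower pole order, integer coefficients})$ --- to a combination of $\theta^j(1/f)$ with $j\le m$. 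Together with the extra relation described below, which brings all pole orders above $n$ down to $n$, this is property~(i) of Definition~\ref{cyclicMUM}; the coefficients occurring in that relation define the monic order-$n$ operator $L$ with $L(1/f)\in d\sO_f$, and the MUM property follows because those coefficients lie in $t\,\Z[t,1/D_f(t)]$.

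The crux is the extra relation, which asserts that $\theta^n(1/f)$, equivalently $n!/f^{n+1}$, is congruent modulo $d\sO_f$ to a combination of $\theta^j(1/f)$ with $j<n$. In Proposition~\ref{MUMsimplicial} this was immediate from $x_0x_1\cdots x_n=1$; here there is no monomial of degree $n$ whose class modulo lower degree and $d\sO_f$ is a scalar, so the relation has to be produced by reducing the degree-$n$ invariant $\frac{n!\,t^n\,z_1z_2\cdots z_n}{f^{n+1}}$ in two ways. On one hand, peeling off the factors $z_i$ one at a time with the second identity yields a $t$-divisible $\Z[t,1/D_f(t)]$-combination of $\theta^j(1/f)$, $j<n$. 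On the other hand, writing $g^n=n!\,z_1\cdots z_n+(\text{symmetric monomials in which some }z_i\text{ does not occur})$, reducing the extra monomials by the scheme above, and using $t^n g^n=(1-f)^n$, one gets $n!/f^{n+1}$ with a coefficient that is a unit in $\Z[t,1/D_f(t)]$. Equating the two expressions and solving for $\theta^n(1/f)$ produces $L$. The hard part of the whole argument is the bookkeeping in this last step: one must verify that the leading coefficient is a unit, so that $L$ is genuinely monic of order $n$; that the remaining coefficients are divisible by $t$, so that $L$ is of MUM-type; and that no denominator outside $n!$ is ever introduced, so that one may take $D_f\in n!\,\Z[t]$ with $D_f(0)=n!$. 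As a by-product one obtains that $1/f,\dots,1/f^n$ are $\Q(t)$-linearly independent modulo $d\sO_f$, so that $(\sO_f^\circ)^\sG/d\sO_f$ has rank exactly $n$.
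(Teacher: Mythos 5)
Your overall plan coincides with the paper's: pass to $X_i=x_i+x_i^{-1}$, $Y_i=x_i-x_i^{-1}$ (your $z_i,w_i$), use $\theta_i\bigl(Y_iB/f^m\bigr)\in d\sO_f$ to trade a square $X_i^2$ in the numerator for lower-degree terms, symmetrize, use $t\sum_iX_i=1-f$, and obtain $L$ by reducing the top-degree invariant $t^nX_1\cdots X_n/f^{n+1}$ in two ways. But the crux is exactly where your plan has a gap. Your ``Way 1'' --- \emph{peeling off the factors $z_i$ one at a time with the second identity} --- does not work as stated: that identity only lowers numerators divisible by some $z_i^2$; applied to a single factor $z_i$ it raises the pole order and, after trading the square back down, returns the same term, i.e.\ it is circular. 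Since $z_1\cdots z_n$ is square-free, no one-variable-at-a-time application reduces it. What makes it reducible \emph{with an extra factor $t^2$} is a global cancellation: in the paper this is the symmetrized identity behind~\eqref{symmetric-reduction} specialized to $r=m=n$, where the vanishing $s_{n+1}=0$ plays the role that $x_0x_1\cdots x_n=1$ played in the simplicial case (equivalently, multiply $z_1\cdots z_n$ by $tg=1-f$ and trade the resulting squares; the pole-order-$n$ contributions cancel precisely because every variable occurs in the monomial). Identifying this cancellation is the missing idea; without it your Way 1 is an assertion rather than an argument.

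Moreover, the parts you defer as ``bookkeeping'' are the actual substance of the remaining proof. The paper gets monicity and the MUM property by performing both reductions modulo $(t^2,d\sO_f)$ and proving the closed formula $(m-1)!\,r!\,t^rs_r/f^m\equiv\theta^r(\theta+1)_{m-r-1}(1/f)$ there, so that the coefficient of $\theta^n(1/f)$ in the resulting relation is $1+O(t^2)$ while all other coefficients are $O(t^2)$. Your mod-$t$ count via $t^ng^n=(1-f)^n$ plausibly yields a unit leading coefficient, but it does not by itself show that the lower-order coefficients vanish at $t=0$: the expansion $(1-f)^n/f^{n+1}$ and the reductions of the extra multinomial terms both contribute lower pole orders with nonzero constant parts, and MUM requires these to cancel exactly --- which is what the paper's mod-$t^2$ analysis delivers. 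Finally, your closing ``by-product'' that $1/f,\ldots,1/f^n$ are $\Q(t)$-linearly independent modulo $d\sO_f$ (rank exactly $n$) does not follow from reduction identities, which only bound the rank from above; in the paper this independence rests on irreducibility of $L$ (Lemma~\ref{independence-over-Qt}), which for the hyperoctahedral family is only conjectural and is deliberately not claimed in Proposition~\ref{MUMhyperoctahedral}.
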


\begin{proof}[Proof of Proposition \ref{MUMhyperoctahedral}]
For the beginning we will work over the ring $R=\Z[1/n!][t]$.
Define $X_i=x_i+\frac{1}{x_i}$ and $Y_i=x_i-\frac{1}{x_i}$ for $i=1,\ldots,n$. 
Any element in $\Z[x_1^{\pm 1},\ldots,x_n^{\pm 1}]^\sG$ is a symmetric function in $X_1,\ldots,X_n$.
Notice also that
$\theta_iX_i=Y_i$ and $\theta_iY_i=X_i$. 

We first reduce $m!\frac{A(\v x)}{f(\v x)^{m+1}}$ as a linear combination of $\theta^i(1/f)$
for $i=0,1,\ldots,m$. We shall reduce this problem to the case when $A$ is an
elementary symmetric function of $X_1,\ldots,X_n$. 

For any $i=1,\ldots,n$, and polynomial $B$ in the $X_i$ consider
\begin{eqnarray*}
\theta_i\left(\frac{Y_iB}{f^{m}}\right)&=&\frac{X_iB}{f^m}+\frac{Y_i\theta_iB}{f^m}+mt\frac{Y_i^2B}{f^{m+1}}\\
&=&\frac{X_iB}{f^m}+\frac{Y_i\theta_iB}{f^m}+mt\frac{X_i^2B}{f^{m+1}}-mt\frac{4B}{f^{m+1}}
\end{eqnarray*}
Multiply on both sides by $(m-1)!t^{\deg{B}+1}$. We get
\[
m!t^{\deg{B}+2}\frac{X_i^2B}{f^{m+1}}\is m!t^{\deg{B}+2}\frac{4B}{f^{m+1}}-
(m-1)!t^{\deg{B}+1}\frac{X_iB}{f^m}-(m-1)!t^{\deg{B}+1}\frac{Y_i\theta_iB}{f^m}\mod{d\sO_f}.
\]
So we see that any admissible term of the form $m!\frac{t^{\deg(A)}A(\v x)}{f(\v x)^{m+1}}$, where $A$ is divisible by the square of some $X_i$, is equivalent
modulo $d\sO_f$ to a $\Z[t]$-linear combination of an admissible term with denominator $f^{m+1}$ and
a numerator of degree $\deg(A)-2$ and admissible terms with denominator $f^m$. 

Thus $m!\frac{t^{\deg(A)}A(\v x)}{f(\v x)^{m+1}}$ is equivalent modulo $d\sO_f$ to a
linear combination of terms with admissible  numerators which have degree at most $1$ in each
variable and denominator $f(\v x)^{k+1}$ with $0 \le k \le m$.
So far we have not used the symmetry of $A$ in the $X_i$, but for the next step this will be important.
Since $\#\sG=n! 2^n$ is invertible in our ring, we can apply symmetrization with respect
to the action of $\sG$ and conclude that $m!\frac{t^{\deg(A)}A(\v x)}{f(\v x)^{m+1}}$
is equivalent to a linear combination of terms $k! t^r s_r(\v X)/f(\v x)^{k+1}$ where 
$0 \le r \le k \le m$ and $s_r(\v X)$ is the elementary symmetric 
polynomial in $X_1,\ldots,X_n$ of degree $r$. When $r=0$ we define $s_0=1$. In the following
computation we use the identities $\sum_{i=1}^n\frac{\partial}{\partial X_i}s_r=(n+1-r)s_{r-1}$,
$\sum_{i=1}^nX_i\frac{\partial}{\partial X_i}s_r=rs_r$
and $\sum_{i=1}^nX_i^2\frac{\partial}{\partial X_i}s_r=s_1s_r-(r+1)s_{r+1}$. These hold
for $r=0,1,\ldots,n$ if we define $s_{-1}=s_{n+1}=0$.
We make the following computation.
\begin{eqnarray*}
\sum_i\theta_i\left(\frac{Y_i\frac{\partial}{\partial X_i}s_r}{f^m}\right)&=&
\sum_i\frac{X_i\frac{\partial}{\partial X_i}s_r}{f^m}+mt\frac{(X_i^2-4)\frac{\partial}{\partial X_i}s_r}
{f^{m+1}}\\
&=&\frac{rs_r}{f^m}+mt\frac{s_1s_r-(r+1)s_{r+1}}{f^{m+1}}-4mt\frac{(n+1-r)s_{r-1}}{f^{m+1}}\\
&=&(r-m)\frac{s_r}{f^m}+m\frac{s_r}{f^{m+1}}-mt(r+1)\frac{s_{r+1}}{f^{m+1}}-
4mt(n+1-r)\frac{s_{r-1}}{f^{m+1}}.
\end{eqnarray*}
In the first equality we used that $\frac{\partial^2}{\partial X_i^2}s_r=0$ and $Y_i^2=X_i^2-4$.
In the third equality we used $ts_1=1-f$. 

After multiplication by $r!(m-1)!t^r$ we get
\be\label{symmetric-reduction}
\bal
m!&\frac{(r+1)!t^{r+1}s_{r+1}}{f^{m+1}}\\
&\is (r-m)(m-1)!\frac{r! t^rs_r}{f^m}+m!
\frac{r!t^rs_r}{f^{m+1}}-
4m!t^2(n+1-r)r\frac{(r-1)!t^{r-1}s_{r-1}}{f^{m+1}}\mod{d\sO_f}.
\eal\ee
Thus we see that modulo $d\sO_f$ every term $m!(r+1)!t^{r+1}s_{r+1}/f^{m+1}$
with $0\le r<n$ equals a $\Z[t]$-linear combination of similar
terms whose numerator or denominator have lower degree. By induction we can continue this reduction 
procedure until we end with a linear combination of terms $k!/f^{k+1}$ with $0\le k\le m$.

To finalize the proof we show that $\theta^n(1/f)$ is a linear combination of $\theta^i(1/f)$,
$i=0,\ldots,n-1$.

To that end we reduce
$(n!)^2t^ns_n/f^{n+1}$ in two ways as linear combination of $r!/f^{r+1}$ with $r=0,\ldots,n$ and show
that they are different. 

First set $r=m=n$ in \eqref{symmetric-reduction}.
Using our definition $s_{n+1}=0$ we find that 
\[
n!\frac{n!t^ns_n}{f^{n+1}}\is 4n! n t^2\frac{(n-1)!t^{n-1}s_{n-1}}{f^n}\mod{d\sO_f}.
\]
Using \eqref{symmetric-reduction}
we see that the right hand side reduces to $t^2$ times a $\Z[t]$-linear combination of
terms $r!/f^{r+1}$ with $r=0,\ldots,n-1$ modulo $d\sO_f$. For the second reduction use the left hand
side and reduce again with the aid of~\eqref{symmetric-reduction}.
This time we get a result that is not a multiple of $t^2$
and therefore not the same as the first reduction. This shows the existence of a non-trivial relation
between $\theta^i(1/f)$, $i=0,1,\ldots,n$. We make this relation more explicit.

Note that the last term on the right hand side of \eqref{symmetric-reduction} reduces to
$t^2$ times a $\Z[t]$-linear combination of $k!/f^{k+1}$ for $k=0,\ldots,m$. We now consider
\eqref{symmetric-reduction} modulo $t^2$,
\[
m!\frac{(r+1)! t^{r+1}s_{r+1}}{f^{m+1}}\is (r-m)(m-1)!
\frac{r!t^rs_r}{f^m}+m!\frac{r!t^rs_r}{f^{m+1}}
\mod{t^2,d\sO_f}.
\]
We assert that
\[
(m-1)!\frac{r! t^rs_r}{f^m}\is 
\theta^r(\theta+1)_{m-r-1}\left(\frac{1}{f}\right)\mod{t^2,d\sO_f}
\]
for all $r<m$. An easy verification shows that this formula is a solution of our recursion
modulo $(t^2,d\sO_f)$ and it also satisfies the starting condition $(m-1)!/f^m=(\theta+1)_{m-1}(1/f)$.

We now conclude that 
\[
n!\frac{n!t^{n}s_n}{f^{n+1}}\is \theta^n\left(\frac{1}{f}\right)\mod{t^2,d\sO_f}.
\]
This reduction is distinct from our first reduction because it is not zero modulo $t^2$. 
Furthermore, we know that $n!\frac{n!t^{n}s_n}{f^{n+1}}$ is a $\Z[t]$-linear combination
of terms $r!/f^{r+1}$ with $r=0,\ldots,n$. By the relation $r!/f^{r+1}=(\theta+1)\ldots(\theta+r)(1/f)$
it is also a $\Z[t]$-linear combination of $\theta^r(1/f)$ for $r=0,\ldots,n$. Using the above
congruence modulo $t^2,d\sO_f$ we see that the coefficient of $\theta^n(1/f)$ equals
$1\mod{t^2}$. Thus we find that there exist $a_i(t)\in\Z[t]$ such that
\[
\sum_{i=0}^n a_i(t) \theta^{n-i}(1/f) \in d\sO_f,
\]
with $a_0(0)=1$ and $a_i(0)=0$ for all $i>0$. We take $D_f(t)=n!a_0(t)$ (to account for inversion of $n!$)
and our proposition is proven.
\end{proof}

\section{Vanishing of $\alpha_1$}\label{sec:vanishing}

In Sections~\ref{sec:HWC}-\ref{sec:hyperoctahedral} we constructed the $p$-adic Frobenius structure for the Picard--Fuchs differential equations $L(y)=0$ of order $n<p$ corresponding to the simplicial and hyperoctahedral families in $n$ dimensions. By Proposition~\ref{cartier2frobenius} the respective Frobenius constants are given by $\alpha_i=p^{-i}\lambda_i(0)$, where $\lambda_i \in \Z_p\lb t \rb$ are the coefficients in the formula
\be\label{cartier-on-f-once-again}
\cartier(1/f) = \sum_{i=0}^{n-1} \lambda_i(t)(\theta^i(1/f))^\sigma \mod {d \hat\sO_{f^\sigma}}.
\ee

Let $n \ge 2$. The fact that $\alpha_1=0$ is crucial for Corollary~\ref{quintic-diagonal}. In the proof of this corollary in the introductory section we said that vanishing follows from~\cite[Proposition 7.7]{BeVlIII}. In this concluding Section we would like to explain how to apply this result to our situation. We work over the ring $\Z_p\lb t \rb$.

\begin{lemma}\label{mod-fil-2-lemma} For the simplicial and hyperoctahedral families in dimension $n \ge 2$ we have 
\begin{itemize} \item[(i)] the quotient $(\hat\sO_f^\circ)^\sG/\fil_2$ is the free $\Z_p\lb t \rb$-module with basis $1/f$ and $\theta(1/f)$;
\item[(ii)] $d \hat\sO_f \cap (\hat\sO_f^\circ)^\sG \subset \fil_2$.
\end{itemize}
\end{lemma}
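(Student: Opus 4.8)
The plan is to use the decomposition~\eqref{O-f-decomposition-k} from Dwork crystals III applied with $k=1$ and $k=2$, which is legitimate because by Theorem~\ref{HWC} the simplicial and hyperoctahedral Newton polytopes have all proper faces simplices of volume~$1$, so the $k$-th Hasse--Witt condition holds for all $k<p$. Concretely, for $k=2$ one has $\hat\sO_f \cong \sO_f(2) \oplus \fil_2$, where $\sO_f(2)$ is the $\Z_p\lb t \rb$-module spanned by the $t^{\deg(\v u)}\v x^{\v u}/f(\v x)^2$ with $\v u \in 2\Delta$. Intersecting with $\sG$-invariants and using that symmetrization commutes with $\cartier$ (hence preserves $\fil_2$), we get $(\hat\sO_f^\circ)^\sG \cong (\sO_f(2))^{\sG,\circ} \oplus (\fil_2 \cap (\hat\sO_f^\circ)^\sG)$, so part~(i) reduces to identifying the $\sG$-invariant part of $\sO_f(2)$ supported in the interior, modulo the relations coming from $d\sO_f$, with the rank-$2$ module spanned by $1/f$ and $\theta(1/f)$.

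For part~(i), the key input is the explicit Dwork--Griffiths reduction carried out in Sections~\ref{sec:simplicial} and~\ref{sec:hyperoctahedral}. Those computations show that every $\sG$-invariant admissible term $m!\,t^{\deg(\v u)}\v x^{\v u}/f(\v x)^{m+1}$ reduces modulo $d\sO_f$ to a $\Z_p\lb t \rb$-linear combination of $1/f, \theta(1/f),\ldots,\theta^{n-1}(1/f)$; but one must track the reduction \emph{modulo $\fil_2$} rather than modulo $d\sO_f$. The point is that an element of $\sO_f(2)$ lands in $\fil_2$ precisely when its image under $\cartier$ is divisible by $p^2$, and the reduction formulae~\eqref{reduction-x-0}, \eqref{reduction-x-i} (resp.~\eqref{symmetric-reduction}) introduce factors of $t$ (equivalently, lower the denominator exponent) in a way that, after one application of $\cartier$, produces the extra power of $p$. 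So I would argue: the quotient $\sO_f(2)/\bigl(\fil_2 \cap \sO_f(2) + d\sO_f\bigr)$ is spanned by $1/f$ and $\theta(1/f)$ because any monomial of degree $\ge 2$ in the numerator can be pushed, via these reductions, into terms of strictly smaller degree plus terms in $\fil_2$; and freeness of rank~$2$ follows because the period-map argument of Lemma~\ref{independence-over-powerseries} already gives linear independence of $1/f$ and $\theta(1/f)$ modulo $d\hat\sO_f \supset$ nothing larger than $\fil_2$ can collapse them (as $\fil_2 \cap \sO_f(2) = 0$ from the $k=2$ decomposition being a direct sum).

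For part~(ii), I would show that a derivative $\omega = \sum_i \theta_i(\nu_i)$ with $\nu_i \in \hat\sO_f$ which happens to lie in $(\hat\sO_f^\circ)^\sG$ must lie in $\fil_2$. Using the $k=1$ decomposition $\hat\sO_f \cong \sO_f(1) \oplus \fil_1$ write each $\nu_i = \omega_i + \delta_i$; then $\sum_i \theta_i(\delta_i) \in d\hat\sO_f$ and, since $\cartier \circ \theta_i = p\,\theta_i \circ \cartier$ and $\delta_i \in \fil_1$, one gets $\sum_i \theta_i(\delta_i) \in \fil_2$. It remains to handle $\sum_i \theta_i(\omega_i)$ with $\omega_i \in \sO_f(1)$: this lies in $\sO_f(2)$, and we need that its projection to $\sO_f(2)$ in the $k=2$ decomposition is zero once we know the total sum is in $(\hat\sO_f^\circ)^\sG$; but $\sO_f(2)$ as written is spanned by terms $t^{\deg\v u}\v x^\v u/f^2$ supported in $2\Delta$ whereas the interior condition forces support in $2\Delta^\circ$, and for a reflexive $\Delta$ the only interior lattice point of $2\Delta$ reachable this way is $\v 0$ (degree $\le 1$), which gives a multiple of $1/f$ and $\theta(1/f)$ — hence no genuine degree-$2$ contribution survives the interior constraint, so the $\sO_f(2)$-component is in the span of $1/f,\theta(1/f)$, and combined with it being a derivative it must vanish.

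\textbf{Main obstacle.} The delicate point is the bookkeeping in part~(i): proving that the Dwork--Griffiths reductions of Sections~\ref{sec:simplicial}--\ref{sec:hyperoctahedral}, which were designed to land modulo $d\sO_f$, actually respect the finer filtration $\fil_2$ — i.e. that the "error terms" of strictly smaller degree are genuinely in $\fil_2$ and not merely in $d\hat\sO_f$. This requires combining the explicit recursions with the characterization of $\fil_2$ via divisibility of $\cartier$ by $p^2$, and this is precisely where \cite[Proposition 7.7]{BeVlIII} — together with the hypotheses that $\sG$ acts transitively on the vertices of $\Delta$ and the vertex coefficient of $g$ is $1$ — is invoked to pin down that the relevant coefficient modulo $\fil_2$ is exactly $\widetilde\lambda_1(0)$, whose vanishing is the content of that proposition.
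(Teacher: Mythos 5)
You reach for the right tool---the decomposition~\eqref{O-f-decomposition-k}, available by Theorem~\ref{HWC}---but then abandon it for a detour that is both unnecessary and, by your own admission, incomplete. Part~(i) does not require re-running the Dwork--Griffiths reductions of Sections~\ref{sec:simplicial}--\ref{sec:hyperoctahedral} modulo $\fil_2$, and it has nothing to do with \cite[Proposition 7.7]{BeVlIII}: that proposition computes $\widetilde\lambda_1(0)$ and enters only later (Lemma~\ref{vanishing-lemma}), to get $\alpha_1=0$; it plays no role in this structural lemma. The actual argument is short: the second Hasse--Witt condition gives the \emph{interior} decomposition $\hat\sO_f^\circ\cong\sO_f^\circ(2)\oplus\fil_2$, and the key combinatorial input---which you never state, and in fact get wrong when you assert that the only interior lattice point of $2\Delta$ arising is $\v 0$---is that $(2\Delta^\circ)_\Z$ consists of $\v 0$ \emph{together with the vertices of $\Delta$}. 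Since $\sG$ acts transitively on the vertices and $g$ is the sum of the vertex monomials with coefficient $1$, the $\sG$-invariant part of $\sO_f^\circ(2)$ is spanned by $1/f^2$ and $tg/f^2=\theta(1/f)$, i.e.\ by $1/f$ and $\theta(1/f)$; symmetrizing the decomposition (legitimate because $p\nmid\#\sG$ and symmetrization commutes with $\cartier$, hence preserves $\fil_2$) gives (i), with freeness coming from the directness of the sum. The step you flag as the ``main obstacle''---pushing terms into $\fil_2$ via the explicit recursions---is precisely what you do not prove, so part~(i) is not established as written; moreover the spanning statement only needs the lattice-point and transitivity observations above, not any reduction modulo $d\sO_f$, which is not even the submodule you are quotienting by in (i).

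In part~(ii) the splitting $\nu_i=\omega_i+\delta_i$ with $\delta_i\in\fil_1$ and the observation that $\cartier\circ\theta_i=p\,\theta_i\circ\cartier$ forces $\theta_i(\delta_i)\in\fil_2$ is fine, but the concluding phrase ``combined with it being a derivative it must vanish'' is exactly the nontrivial point and is left unjustified. After your reductions you face an element $a(t)/f+b(t)\,\theta(1/f)$ lying in $d\hat\sO_f$, and to conclude $a=b=0$ you must know that $1/f$ and $\theta(1/f)$ are linearly independent modulo the \emph{completed} module $d\hat\sO_f$; this is Lemma~\ref{independence-mod-hat-Of} (which itself uses the Hasse--Witt condition and irreducibility of $L$), not Lemma~\ref{independence-over-powerseries}, which only treats $d\sO_f$. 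The paper instead deduces (ii) directly from (i) together with the freeness of $(\hat\sO_f^\circ)^\sG/d\hat\sO_f$ on $\theta^i(1/f)$, $0\le i\le n-1$. Without invoking that independence, your argument does not exclude the possibility that a nonzero $\Z_p\lb t\rb$-combination of $1/f$ and $\theta(1/f)$ is a derivative, which is precisely what (ii) must rule out.
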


\begin{proof}
Recall that in the cases under consideration the $n$th Hasse--Witt condition holds due to Theorem~\ref{HWC}. By the results in Section~\ref{sec:simplicial} and~\ref{sec:hyperoctahedral} and Lemma~\ref{independence-mod-hat-Of} we have that $(\hat\sO_f^\circ)^\sG/d\hat\sO_f$ is a free $\Z_p\lb t \rb$-module with the basis $\theta^i(1/f)$, $0 \le i \le n-1$. Hence~(ii) would follow from~(i). We now prove (i). The $n$th Hasse-Witt condition implies in particular the second Hasse-Witt condition. Therefore we have the direct sum decomposition $\hat\sO_f^\circ \cong \sO_f^\circ(2) \oplus \fil_2$. Note that $\sO_f^\circ(2)^\sG$ is spanned by $1/f$ and $\theta(1/f)$ because $\sG$ acts transitively on the set of vertices of the Newton polytope $\Delta$ and the only integral points in $2\Delta^\circ$ are $\v 0$ and the vertices. Using the above direct sum decomposition and doing symmetrization with respect to $\sG$ we obtain (i). 
\end{proof}
 
In~\cite[\S 7]{BeVlIII} we considered the series $A,B,\widetilde\lambda_0,\widetilde\lambda_1 \in \Z_p\lb t \rb$ such that
\be\label{mod-fil-2}\bal
\theta^2(1/f) &= A(t)/f + B(t) \theta(1/f) \mod {\fil_2},\\
\cartier(1/f) &= \widetilde\lambda_0(t)/f + \widetilde\lambda_1(t) (\theta (1/f))^\sigma \mod {\fil_2^\sigma}.
\eal\ee
In loc.cit. we denote them by $\lambda_0$ and $\lambda_1$, but here we need to change the notation to avoid collision with the coefficients in~\eqref{cartier-on-f-once-again}. Though we don't use this fact, let us remark that $\theta^2-B \theta - A$ is a right divisor of our Picard-Fuchs operator $L$ in $\Z_p\lb t \rb[\theta]$ as it follows from (ii) above. Here
\[
A(0)=B(0)=0, \quad \widetilde\lambda_0(0)=1 \text{ and } \widetilde\lambda_1(0)=0
\]
by Propositions~7.6 and~7.7 in loc.cit. respectively. Their proofs are based on congruences for  expansion coefficients of the functions in the identities~\eqref{mod-fil-2}. The desired vanishing result follows from the next lemma.

\begin{lemma}\label{vanishing-lemma} In the above notations we have $\lambda_1(0)=\widetilde \lambda_1(0)$.
\end{lemma}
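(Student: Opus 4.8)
The statement $\lambda_1(0)=\widetilde\lambda_1(0)$ is a comparison between the coefficients appearing in the reduction of $\cartier(1/f)$ modulo $d\hat\sO_{f^\sigma}$ (the $\lambda_i$) and the coefficients appearing in its reduction modulo $\fil_2^\sigma$ (the $\widetilde\lambda_i$). My plan is to show that the two reductions agree "up to order $\fil_2$", so that the low-order coefficients $\widetilde\lambda_0,\widetilde\lambda_1$ read off the two terms $1/f$ and $(\theta(1/f))^\sigma$ that survive, and that these are exactly the first two terms $\lambda_0,\lambda_1$ of the longer expansion. The bridge between the two is Lemma~\ref{mod-fil-2-lemma}: part~(ii) says $d\hat\sO_{f^\sigma}\cap(\hat\sO_{f^\sigma}^\circ)^\sG\subset\fil_2^\sigma$ (applying the lemma with $f$ replaced by $f^\sigma$, which is legitimate since $f^\sigma=1-t^\sigma g$ has the same Newton polytope and the $n$th Hasse--Witt condition is insensitive to the Frobenius twist), and part~(i) identifies $(\hat\sO_{f^\sigma}^\circ)^\sG/\fil_2^\sigma$ as the free $\Z_p\lb t\rb$-module on $1/f^\sigma$ and $(\theta(1/f))^\sigma$.

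\textbf{Key steps.} First I would start from~\eqref{cartier-on-f-once-again}, i.e. $\cartier(1/f)=\sum_{i=0}^{n-1}\lambda_i(t)(\theta^i(1/f))^\sigma$ modulo $d\hat\sO_{f^\sigma}$. Since $\cartier$ maps $\sG$-invariants to $\sG$-invariants (as $p\nmid\#\sG$ and symmetrization commutes with $\cartier$), the left-hand side lies in $(\hat\sO_{f^\sigma}^\circ)^\sG$, and one checks that the difference between the two sides, being in $d\hat\sO_{f^\sigma}$ and $\sG$-invariant, actually lies in $d\hat\sO_{f^\sigma}\cap(\hat\sO_{f^\sigma}^\circ)^\sG\subset\fil_2^\sigma$ by Lemma~\ref{mod-fil-2-lemma}(ii). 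Hence
\[
\cartier(1/f)\is\sum_{i=0}^{n-1}\lambda_i(t)(\theta^i(1/f))^\sigma\pmod{\fil_2^\sigma}.
\]
Next I would reduce the right-hand side modulo $\fil_2^\sigma$ using the first relation of~\eqref{mod-fil-2} applied to $f^\sigma$: each $(\theta^i(1/f))^\sigma$ with $i\ge 2$ is, modulo $\fil_2^\sigma$, a $\Z_p\lb t\rb$-combination of $1/f^\sigma$ and $(\theta(1/f))^\sigma$, and the recursion $\theta^2(1/f)\is A/f+B\,\theta(1/f)$ (whose $\sigma$-twist reads with coefficients $A^\sigma,B^\sigma$, noting $(\theta f^{-1})^\sigma=(\theta^\sigma)f^{-\sigma}$ differs from $\theta(f^{-\sigma})$ by a unit multiple that I must track) expresses them explicitly. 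The upshot is that
\[
\cartier(1/f)\is\big(\lambda_0+(\text{contribution of }\lambda_{\ge2}\text{ to the }1/f\text{-term})\big)\tfrac1{f^\sigma}+\big(\lambda_1+\cdots\big)(\theta(1/f))^\sigma\pmod{\fil_2^\sigma}.
\]
Comparing with the second relation of~\eqref{mod-fil-2}, and using the freeness in Lemma~\ref{mod-fil-2-lemma}(i) to equate coefficients, gives $\widetilde\lambda_1=\lambda_1+(\text{terms coming from }\lambda_2,\ldots,\lambda_{n-1})$. Finally I evaluate at $t=0$: by Proposition~\ref{Lambda-entries-p-divisibility} each $\lambda_i(t)$ is divisible by $p^i$, so in particular the "correction" coefficients multiplying $\lambda_2,\ldots,\lambda_{n-1}$ — which are built from $A,B$ and hence vanish at $t=0$ since $A(0)=B(0)=0$ — contribute $0$ at $t=0$; the $\fil_2^\sigma$-expansion of $(\theta^i(1/f))^\sigma$ has coefficients vanishing at $t=0$ for $i\ge2$, so they drop out. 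We are left with $\widetilde\lambda_1(0)=\lambda_1(0)$.

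\textbf{Main obstacle.} The delicate point is the bookkeeping when translating between $\theta$ and $\theta^\sigma=\frac{t^\sigma}{\theta(t^\sigma)}\theta$: the expansions~\eqref{mod-fil-2} for $f$ involve $\theta$, while the formula~\eqref{cartier-on-f-once-again} naturally involves $\theta^\sigma$ acting on $1/f^\sigma$, and the conversion factor $\frac{\theta(t^\sigma)}{t^\sigma}=p(1+u(t))$ with $u\in t\Z_p\lb t\rb$ is a unit times $p$. I must make sure that (a) the $\sigma$-twisted version of the first line of~\eqref{mod-fil-2} is stated correctly, with $(\theta(1/f))^\sigma$ rather than $\theta(1/f^\sigma)$ on the right, and (b) the claim "corrections vanish at $t=0$" survives this rewriting — which it does, because $u(0)=0$ means $p\frac{t^\sigma}{\theta(t^\sigma)}\theta\is\theta$ modulo $t\Z_p\lb t\rb[\theta]$, exactly as used in the proof of Proposition~\ref{cartier2frobenius}, so no spurious constant terms are introduced. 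Once this is handled, everything collapses at $t=0$ and the equality is immediate.
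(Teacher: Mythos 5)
Your proposal is correct and follows essentially the same route as the paper: use Lemma \ref{mod-fil-2-lemma}(ii) (in its $f^\sigma$-form) to read \eqref{cartier-on-f-once-again} modulo $\fil_2^\sigma$, iterate the relation $\theta^2(1/f)\is A/f+B\,\theta(1/f)\pmod{\fil_2}$ to express $(\theta^i(1/f))^\sigma$, $i\ge2$, in terms of $1/f^\sigma$ and $(\theta(1/f))^\sigma$ with coefficients vanishing at $t=0$, then compare with \eqref{mod-fil-2} using the freeness from part (i) and set $t=0$. Your worry about converting between $\theta$ and $\theta^\sigma$ is in fact moot, since both expansions are written directly in terms of $(\theta^i(1/f))^\sigma$, which is exactly how the paper's proof proceeds.
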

\begin{proof} Using induction For each $i \ge 2$ we construct the series $A_i,B_i \in \Z_p\lb t \rb$ such that $\theta^i (1/f) = A_i(t)/f + B_i(t) \theta(1/f)\mod {\fil_2}$. Indeed, for $i=2$ we take $A_2=A, B_2=B$ and apply $\theta$ consecutively to obtain the coefficients for higher $i$. It is not hard to show by induction that $A_i(0)=B_i(0)=0$ for each $i$. Due to Lemma~\ref{mod-fil-2-lemma}(ii) identity~\eqref{cartier-on-f-once-again}  also holds modulo $\fil_2^\sigma$. For every $i \ge 2$ we substitute $(\theta^i (1/f))^\sigma$ with $A_i(t^\sigma)/f^\sigma+B_i(t^\sigma)(\theta(1/f))^\sigma$ and conclude that $\widetilde\lambda_1 = \lambda_1 + \sum_{i = 2}^n B_i(t^\sigma)\lambda_i$. Our claim follows from this because $B_i(0)=0$ for all $i$.
\end{proof}


\begin{thebibliography}{xx}
\bibitem{AESZ10} G. Almkvist, C. van Enckevort, D. van Straten, W. Zudilin, 
\emph{Tables of Calabi-Yau equations}, arXiv:math/0507430.
\bibitem{BeVl20I} F. Beukers, M. Vlasenko, \emph{Dwork crystals I}, Int. Math. Res. Notices,
2021 (2021), 8807--8844 
\bibitem{BeVl20II} F. Beukers, M. Vlasenko, \emph{Dwork crystals II}, Int. Math. Res. Notices,
2021 (2021), 4427--4444 
\bibitem{BeVlIII} F. Beukers, M. Vlasenko, \emph{Dwork crystals III}, arXiv:math/2105.14841(2021).
\bibitem{BH89} F.Beukers, G.Heckman, \emph{Monodromy for the hypergeometric function 
${}_nF_{n-1}$\/}, Invent.~Math.~95 (1989), 325--354.
\bibitem{BP01} J. Bryan, R. Pandharipande, \emph{BPS states of curves in Calabi-Yau threefolds},
Geometry \& Toplogy 5 (2001), 287--318. Corrections published in 2002.
\bibitem{COGP91}  P. Candelas, X. de la Ossa, P. Green, L. Parkes,
\emph{An exactly soluble superconformal theory from a
mirror pair of Calabi-Yau manifolds}, Phys. Lett. B 258 (1991), no. 1--2, 118--126.
\bibitem{ERR16} E. Delaygue, T. Rivoal, J. Roques,
\emph{On Dwork's p-adic formal congruences theorem and hypergeometric mirror maps},
Memoirs of the American Mathematical Society, 1163, iii-vi, 94 p. (2016).
\bibitem{Giv95} A. Givental, \emph{Homological geometry and mirror symmetry},
Proceedings ICM-1994 (Z\"urich), vol.1, (1995), 472--480.
\bibitem{IP18} E.N.~Ionel and T.H.~Parker, \emph{The Gopakumar-Vafa formula for symplectic manifolds},
Annals of Math. 187 (2018), 1--64.
\bibitem{KP08} A.Klemm, R.Pandharipande, \emph{Enumerative geometry of Calabi-Yau 4-folds},
Comm. Math. Physics 281 (2008), 621--653.
\bibitem{KSV06}  M.Kontsevich, A.Schwarz, V.Vologodsky,
\emph{Integrality of instanton numbers and p-adic B-model}, Physics Letters B, 637 (2006),
97--101.
\bibitem{KR10} C. Krattenthaler, T.Rivoal,
\emph{On the integrality of Taylor coefficients of mirror maps},
Duke Math. J. 151 (2010), 175--218.
\bibitem{LY95} B. H. Lian and S.-T. Yau, \emph{Mirror maps, modular relations and 
hypergeometric series I}, appeared as \emph{Integrality of certain exponential series},
in: Lectures in Algebra and Geometry, Proceedings of
the International Conference on Algebra and Geometry, Taipei, 1995, M.-C. Kang (ed.), 
Int. Press, Cambridge, MA, 1998, 215--227.
\bibitem{Ro14} J. Roques, \emph{On generalized hypergeometric equations and mirror maps},
Proc. Amer. Math. Soc. 142 (2014), 3153--3167.
\bibitem{Sti03} J. Stienstra, \emph{Ordinary Calabi-Yau-3 Crystals},
Proc. Workshop on Calabi-Yau Varieties and Mirror Symmetry,
Fields Institute Communications volume 38 (2003), 255--271, preprint at arXiv:math/0212061.
\bibitem{DS18} D. van Straten, \emph{Calabi-Yau operators},
in Uniformization, Riemann-Hilbert Correspondence, Calabi-Yau Manifolds \& Picard-Fuchs Equations (Eds. Lizhen Ji, Shing-Tung Yau), Advanced Lectures in Mathematics, Vol. 42, 2018, preprint at arXiv:1704.00164.
\bibitem{Vol07} V.Vologodsky, \emph{Integrality of instanton numbers}, arXiv:0707.4617
\end{thebibliography}
\end{document}